\setlist[enumerate,1]{label = (\roman*), ref = \roman*}
\newtheorem{theorem}{Theorem}[section]
\newtheorem{corollary}[theorem]{Corollary}
\newtheorem{lemma}[theorem]{Lemma}
\newtheorem{proposition}[theorem]{Proposition}
\newtheorem{remark}[theorem]{Remark}
\newtheorem{property}[theorem]{Property}
\newcommand{\spinc}{\mathrm{Spin}^c}
\newcommand{\mfs}{\mathfrak{s}}
\newcommand{\Z}{\mathbb{Z}}
\newcommand{\T}{\mathcal{T}}
\newcommand{\FF}{\mathbb{F}}
\newcommand{\relspinc}{\underline{\mathrm{Spin}^c}}
\newcommand{\mfv}{\mathfrak{v}}
\newcommand{\mfh}{\mathfrak{h}}
\newcommand{\mft}{\mathfrak{t}}
\newcommand{\X}{\mathbb{X}}
\newcommand{\Xhat}{\widehat{\X}}
\newcommand{\Ahat}{\widehat{A}}
\newcommand{\Bhat}{\widehat{B}}
\newcommand{\vhat}{\widehat{v}}
\newcommand{\hhat}{\widehat{h}}
\newcommand{\Pihat}{\widehat{\Pi}}
\newcommand{\Aplus}{A^+}
\newcommand{\Bplus}{B^+}
\newcommand{\vplus}{v^+}
\newcommand{\hplus}{h^+}
\newcommand{\Xplus}{\X^+}
\newcommand{\XplusN}{\X^{+,N}}
\newcommand{\Piplus}{\Pi^+}
\newcommand{\spin}{\mathrm{Spin}}
\title{Distance one lens space fillings and band surgery on the trefoil knot}
\author{Tye Lidman}
\address{Department of Mathematics\\North Carolina State University\\ Box 8205 \\ Raleigh, NC 27695-8205 \\ USA}
\email{tlid@math.ncsu.edu}
\author{Allison H.\ Moore}
\address{Department of Mathematics \& Applied Mathematics \\ Virginia Commonwealth University \\ 1015 Floyd Avenue, Box 842014  \\ Richmond, VA 23284-2014 \\ USA}
\email{moorea14@vcu.edu}
\author{Mariel Vazquez}
\address{Department of Mathematics \& Department of Microbiology and Molecular Genetics\\University of California, Davis \\ One Shields Avenue  \\ Davis, CA 95616 \\ USA}
\email{mariel@math.ucdavis.edu}
\keywords{lens spaces, Dehn surgery, Heegaard Floer homology, band surgery, torus knots, d-invariants, reconnection, DNA topology}
\subjclass{57M25, 57M27, 57R58 (primary); 92E10 (secondary)}
\begin{document}

\begin{abstract} 
We prove that if the lens space $L(n, 1)$ is obtained by a surgery along a knot in the lens space $L(3,1)$ that is distance one from the meridional slope, then $n$ is in $\{-6, \pm 1, \pm 2, 3, 4, 7\}$. 
This result yields a classification of the coherent and non-coherent band surgeries from the trefoil to $T(2, n)$ torus knots and links. 
The main result is proved by studying the behavior of the Heegaard Floer $d$-invariants under integral surgery along knots in $L(3,1)$. 
The classification of band surgeries between the trefoil and torus knots and links is motivated by local reconnection processes in nature, which are modeled as band surgeries. 
Of particular interest is the study of recombination on circular DNA molecules. \end{abstract}

\maketitle

%%%%%%%%%%%%%%%%%%%%   Start of main body of article

\section{Introduction}
\label{sec:introduction}
The question of whether Dehn surgery along a knot $K$ in the three-sphere yields a three-manifold with finite fundamental group is a topic of long-standing interest, particularly the case of cyclic surgeries. The problem remains open, although substantial progress has been made towards classifying the knots in the three-sphere admitting lens space surgeries \cite{BleilerLitherland, GodaTeragaito, RasmussenLens, OSlens, Baker:SurgeryI, Baker:SurgeryII, Hedden} (see also J. Berge, \emph{unpublished manuscript}, 1990). When the exterior of the knot is Seifert fibered, there may be infinitely many cyclic surgery slopes, such as for a torus knot in the three-sphere \cite{Moser}. In contrast, the celebrated cyclic surgery theorem \cite{CGLS} implies that if a compact, connected, orientable, irreducible three-manifold with torus boundary is not Seifert fibered, then any pair of fillings with cyclic fundamental group has distance at most one. Here, the \emph{distance} between two surgery slopes refers to their minimal geometric intersection number, and a \emph{slope} refers to the isotopy class of an unoriented simple closed curve on the bounding torus. Dehn fillings that are distance one from the fiber slope of a cable space are especially prominent in surgeries yielding prism manifolds \cite{BleilerHodgson:DehnFilling}. Fillings distance one from the meridional slope were also exploited in \cite{baker:poincare} to construct cyclic surgeries on knots in the Poincar\'e homology sphere. 

In this paper, we are particularly interested in Dehn surgeries along knots in $L(3, 1)$ which yield other lens spaces.  The specific interest in $L(3,1)$ is motivated by the study of local reconnection in nature, such as DNA recombination (discussed below).  Note that by taking the knot $K$ to be a core of a genus one Heegaard splitting for $L(3,1)$, one may obtain $L(p,q)$ for all $p,q$.   More generally, since the Seifert structures on $L(3,1)$ are classified \cite{GeigesLange}, one could enumerate the Seifert knots in $L(3,1)$ and use this along with the cyclic surgery theorem to characterize lens space fillings when the surgery slopes are of distance greater than one. 
This strategy does not cover the case where the surgery slopes intersect the meridian of $K$ exactly once. We will refer to these slopes as \emph{distance one surgeries}, also called integral surgeries. 
In this article we are specifically concerned with distance one Dehn surgeries along $K$ in $L(3,1)$ yielding $L(n,1)$. We prove:

\begin{theorem}
\label{thm:main}
The lens space $L(n,1)$ is obtained by a distance one surgery along a knot in the lens space $L(3, 1)$ if and only if $n$ is one of $\pm 1, \pm 2, 3, 4, -6$ or $7$.  
\end{theorem}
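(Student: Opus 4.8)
The plan is to prove the two implications separately; the substance is in the ``only if'' direction.

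For ``if'', I would exhibit for each $n \in \{\pm 1, \pm 2, 3, 4, -6, 7\}$ an explicit knot in $L(3,1)$ together with an integral slope whose filling is $L(n,1)$. Since $\Sigma_2(T(2,m)) = L(m,1)$ up to orientation, a band move from the trefoil $T(2,3)$ to the torus link $T(2,n)$ lifts in the double branched cover to a distance-one surgery $L(3,1) \to L(n,1)$, so it suffices to produce such band moves: the values $n \in \{\pm 1, \pm 2, 3, 4\}$ arise from standard crossing-smoothing and band-addition moves on a trefoil diagram, and I would draw the coherent, resp.\ non-coherent, band moves realizing $n = -6$ and $n = 7$, confirming each resulting filling by Kirby calculus.

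For ``only if'', suppose $L(n,1)$ is obtained from $L(3,1)$ by a distance-one surgery along $K$. A distance-one surgery is a single two-handle attachment, so it gives a cobordism $W$ from $L(3,1)$ to $L(n,1)$ with $b_1(W)=0$, $b_2(W)=1$, and reversing $W$ realizes $L(3,1)$ as a distance-one surgery on the dual knot $K'\subset L(n,1)$. Since $H_1(L(3,1);\Z)\cong\Z/3$ is finite, $K$ is rationally null-homologous and the integer surgery formula applies: $HF^+$ of the filling is computed by a mapping cone $\Xplus$, and a finite truncation $\XplusN$, assembled from the complexes $\Aplus_s$, $\Bplus_s$ and the maps $\vplus_s$, $\hplus_s$ of $K$ (with $\Ahat_s$, $\Bhat_s$, $\vhat_s$, $\hhat_s$, $\Pihat$ computing $\widehat{HF}$), where the $\Bplus_s$ are copies of the L-space Floer homology of $L(3,1)$. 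I would then compute the $d$-invariants of $L(n,1)$ two ways and equate them: on one hand they are given in closed form by the Ozsv\'ath--Szab\'o recursion for lens spaces, and on the other the mapping cone expresses each $d(L(n,1),\mfs)$ as an explicit function of the nonnegative, monotone, eventually-vanishing local invariants $V_s, H_s$ of $K$ together with the $d$-invariants of $L(3,1)$. The leverage is that $\operatorname{rk}\widehat{HF}(L(3,1)) = 3$, so only a handful of the $V_s, H_s$ can be nonzero while $\operatorname{rk}\widehat{HF}(L(n,1)) = |n|$; comparing the two descriptions gives a finite system of equalities and inequalities in the $V_s$, $H_s$ and $n$ with solutions only for small $|n|$.

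The main obstacle is exactly this comparison, and it has two linked parts. The first is the bookkeeping needed to run the surgery formula for a knot in $L(3,1)$ whose class may generate $\Z/3$: one must track the rational longitude, the correspondence between $\spinc$ structures on $L(3,1)$, on $L(n,1)$, and the subscript $s$ (this is what permits $n$ not to be divisible by $3$), both signs of the framing through the truncated cone and $\Pihat$, and the grading shifts precisely enough that the conclusions about $d$-invariants are exact equalities. The second is solving the resulting system — a case analysis on the sign of the framing, on the order of $[K]$ in $H_1(L(3,1))$, and on which $V_s, H_s$ vanish — to show that the full list of $d$-invariants of $L(n,1)$ is matched by the low-parameter family coming from $K$ precisely when $n \in \{\pm1,\pm2,3,4,-6,7\}$. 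As an independent check on the borderline values I would also cap the $L(3,1)$-end of $W$ (or of $\overline W$) with the standard negative-definite linear-plumbing filling of $\pm L(3,1)$, producing a negative-definite filling of $\pm L(n,1)$ whose intersection lattice must embed in a diagonal lattice by Donaldson's theorem; the attendant inequalities on $d$-invariants eliminate any survivor. Combining the two directions gives the theorem.
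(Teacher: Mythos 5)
Your overall architecture is the same as the paper's: realize the listed values by band moves on the trefoil and lift via the Montesinos trick, then obstruct everything else by comparing the lens-space $d$-invariant recursion against a surgery formula coming from the Ozsv\'ath--Szab\'o mapping cone. But as written the proposal defers exactly the two steps where the proof lives, and one of them needs an ingredient you have not supplied. First, for $|n|\not\equiv 0\pmod 3$ the knot is homologically essential, and ``running the surgery formula with careful bookkeeping'' is not enough to get \emph{exact} $d$-invariant identities: one must prove the analogue $V_{\xi_0+PD[\gamma]}=H_{\xi_0-PD[\gamma]}$ for relative $\spinc$ structures, identify which relative $\spinc$ structure hits the self-conjugate $\spinc$ structure on $Y'$, use the L-space hypothesis to show the projection of the truncated cone onto a single $\Aplus_\xi$ is a quasi-isomorphism (so that $d(Y')$ equals $d(L(3,1),\cdot)-2V_\xi$ up to a shift), and then pin down the grading shift by a model computation with the core $K_U$ of the Heegaard solid torus --- this is precisely why $d(L(3k\mp1,3),1)$ and $d(L(3k\mp1,3),4)$ appear in Propositions~\ref{prop:3k-1-surgery-formula} and \ref{prop:3k+1-surgery-formula}, and it occupies the paper's entire Section~\ref{sec:mappingcone}. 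Also, your stated leverage --- that $\operatorname{rk}\widehat{HF}(L(3,1))=3$ forces all but a handful of $V_s,H_s$ to vanish --- is not correct: in the borderline cases the relevant invariant grows linearly (e.g.\ $N_0=\tfrac{k-1}{2}$ in Proposition~\ref{prop:3k+-}). The actual leverage is Rasmussen's monotonicity $V_\xi\ge V_{\xi+PD[\mu]}\ge V_\xi-1$ (Property~\ref{thm:rasmussen-localh} and \eqref{eq:Vs-inequality}) combined with the explicit lens-space $d$-invariants, which turns each case into a quadratic Diophantine equation with no admissible integral roots.

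Second, the even-$|n|$ cases are a genuine gap. The mapping-cone comparison alone leaves a two-fold ambiguity about which self-conjugate $\spinc$ structure on $L(n,1)$ is picked out by the cobordism, and the paper resolves this and eliminates most even $n$ with an exact statement (Lemma~\ref{lem:even-Lspace}): for a $\spin$ cobordism between L-spaces with $b_2^+=1$, $b_2^-=0$, the $d$-invariants of the restricted self-conjugate $\spinc$ structures differ by exactly $-\tfrac14$, one inequality coming from Lin's $\mathrm{Pin}(2)$-equivariant monopole Floer homology and the other from the negative-definite inequality applied to $-W$. This is what proves Proposition~\ref{prop:even} (even $n$ forces $n=2,4$ or $n<0$ with a specified $\spin$ structure extending over $W$), and that identification is reused in Proposition~\ref{prop:3k-+} to know which label in the Ni--Wu correspondence is relevant. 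Your proposed substitute --- capping off and invoking Donaldson's diagonalization --- is only sketched, and a generic lattice-embedding argument does not obviously recover either the sharp $-\tfrac14$ equality or the $\spin$-structure identification; without some such input the ``finite system'' you describe is underdetermined precisely in the even cases. So: right approach and correct realization step, but the obstruction half is a plan whose hardest components (the essential-knot surgery formula with its grading-shift/model-knot computation, and the $\spin$/even analysis) are named as obstacles rather than proved.
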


While Theorem \ref{thm:main} may be viewed as a generalization of the \emph{lens space realization problem} \cite{Greene}, the result was motivated by the study of reconnection events in nature. Reconnection events are observed in a variety of natural settings at many different scales, for example large-scale magnetic reconnection of solar coronal loops, reconnection of fluid vortices, and microscopic recombination on DNA molecules (e.g. \cite{Li2016, Kleckner2013aa, Shimokawa}). Links of special interest in the physical setting are four-plats, or equivalently two-bridge links, where the branched double covers are lens spaces. In particular, the trefoil $T(2,3)$ is the most probable link formed by any random knotting process \cite{Rybenkov93, Shaw93}, and $T(2,n)$ torus links appear naturally when circular DNA is copied within the cell \cite{AdamsCozz92}. During a reconnection event, two short chain segments, the {\it reconnection sites}, are brought together, cleaved, and the ends are reconnected. When acting on knotted or linked chains, reconnection may change the link type. Reconnection is understood as a band surgery between a pair of links $(L_{1},L_{2})$ in the three-sphere and is modeled locally by a tangle replacement, where the tangle encloses two reconnection sites as illustrated in Figure \ref{fig:resolutions}. Site orientation is important, especially in the physical setting, as explained in Section~\ref{subsec:applications-DNA}. Depending on the relative orientation of the sites, the tangle replacement realizes either a coherent (respectively non-coherent) band surgery, as the links are related by attaching a band (see Figure \ref{fig:resolutions}). More details on the connection to band surgery are included in Section~\ref{sec:applications}.  

\begin{figure}
\begin{center}
\begin{Large}
\begin{tikzpicture}

\node[anchor=south west,inner sep=0] at (0,0) {\includegraphics[width=5.4in]{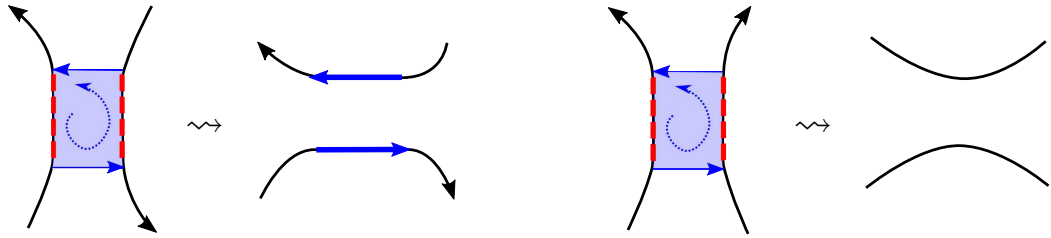}};

%\draw[help lines] (0,0) grid (10,3);
%% This is done by drawing a label above and right
%% of the vertex you're labeling, rather than giving
%% the coordinates of the label itself.
%% Alternately, if you wanted to position the label
%% yourself, you could use a command like:

\node[label=above right:{$\rightsquigarrow$}] at (2.0,1.0){};
\node[label=above right:{$\rightsquigarrow$}] at (10.0,1.0){};

\end{tikzpicture}
\end{Large}
\end{center}
\caption{The links $L_1$ and $L_2$ differ in a three ball in which a rational tangle replacement is made. Reconnection sites are schematically indicated in red. (Left) A coherent band surgery. (Right) A non-coherent band surgery.}
\label{fig:resolutions}
\end{figure}

We are therefore interested in studying the connection between the trefoil and other torus links by coherent and non-coherent band surgery. %For an extended discussion of the relevance to DNA topology see Section \ref{sec:applications}.
The Montesinos trick implies that the branched double covers of two links related by a band surgery are obtained by distance one Dehn fillings of a three-manifold with torus boundary.   
Because $L(n, 1)$ is the branched double cover of the torus link $T(2,n)$, Theorem~\ref{thm:main} yields a classification of the coherent and non-coherent band surgeries from the trefoil $T(2,3)$ to $T(2, n)$ for all $n$.  

\begin{corollary}\label{cor}
The torus knot $T(2, n)$ is obtained from $T(2,3)$ by a non-coherent banding if and only if $n$ is $\pm 1$, 3 or 7. The torus link $T(2, n)$ is obtained from $T(2,3)$ by a coherent banding if and only if $n$ is $\pm 2$, 4 or -6.
\end{corollary}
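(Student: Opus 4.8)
The plan is to deduce the corollary from Theorem~\ref{thm:main} together with the Montesinos trick and an elementary count of link components, and then to construct explicit bandings realizing the permissible values of $n$.

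\emph{The obstruction.} Suppose $T(2,n)$ is obtained from $T(2,3)$ by a band surgery, coherent or not. Passing to double branched covers, the Montesinos trick produces a three-manifold with torus boundary two of whose fillings are $\Sigma_2(T(2,3))$ and $\Sigma_2(T(2,n))$ and are distance one apart; equivalently, $\Sigma_2(T(2,n))$ is obtained by a distance one (integral) surgery along a knot in $\Sigma_2(T(2,3))$. Since the double branched cover of $T(2,m)$ is $L(m,1)$ for every integer $m$ (with the convention that $L(m,1)$ for $m<0$ denotes $-L(-m,1)$), this says that $L(n,1)$ is a distance one surgery along a knot in $L(3,1)$, so Theorem~\ref{thm:main} forces $n\in\{-6,\pm1,\pm2,3,4,7\}$. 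To split this list by the type of band I would use the following elementary fact: a coherent band surgery on a knot yields a two-component link, because the band respects the orientation and separates the knot into two oriented components, whereas a non-coherent band surgery on a knot yields a knot. As $T(2,n)$ has $\gcd(2,n)$ components — a knot for $n$ odd, a two-component link for $n$ even — it follows that a non-coherent banding $T(2,3)\to T(2,n)$ can occur only for $n\in\{\pm1,3,7\}$ and a coherent banding only for $n\in\{\pm2,4,-6\}$.

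\emph{The realizations.} Theorem~\ref{thm:main} does not supply the converse directions, since it produces the distance one surgery but not a band realizing it; so for each $n$ in the two lists I would exhibit an explicit band on a diagram of the trefoil and identify the result as a two-bridge link. The cases $n=\pm1$ and $n=\pm2$ should be immediate: the non-oriented (respectively oriented) resolution of a single crossing of a suitable diagram of $T(2,3)$ is a non-coherent (respectively coherent) band move producing the unknot $T(2,\pm1)$ (respectively a Hopf link $T(2,\pm2)$), the orientation and chirality of the diagram being chosen to land on the desired sign. For $n=3$ one checks that a suitable non-coherent band returns the trefoil to itself. The remaining cases $n=4$, $n=7$, and $n=-6$ are the substantive ones; for each I would draw a band on a standard diagram of $T(2,3)$ and verify, by a short isotopy or by putting the result in two-bridge normal form, that it produces $T(2,4)$, $T(2,7)$, and $T(2,-6)$ respectively. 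It is natural to search for these inside a single twist family of bands whose permissible members are precisely those singled out above.

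\emph{The main obstacle.} Essentially all of the conceptual weight rests on Theorem~\ref{thm:main}; the remaining work is the realization step, and the part that requires the most care is constructing the coherent band from $T(2,3)$ to $T(2,-6)$, and to a lesser extent the non-coherent band to $T(2,7)$. These are the cases that break the naive symmetry $n\leftrightarrow-n$, no crossing-resolution shortcut is available, and one has to both write down the band and correctly recognize the two-bridge link it yields.
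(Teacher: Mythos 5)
Your proposal follows essentially the same route as the paper: the obstruction comes from Theorem~\ref{thm:main} via the Montesinos trick (with the coherent/non-coherent split determined by the parity of the number of components of $T(2,n)$), and the converse is handled by exhibiting explicit bandings for each admissible $n$, which the paper supplies in Figures~\ref{fig:bandings-1} and~\ref{fig:bandings-2} (the $T(2,-6)$ case following \cite[Theorem 5.10]{DIMS}). Your sketch correctly identifies the realization step, including the less symmetric cases $n=7$ and $n=-6$, as the only remaining work, so there is no substantive gap.
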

\begin{proof}Theorem \ref{thm:main} obstructs the existence of any coherent or non-coherent banding from $T(2, 3)$ to $T(2,n)$ when $n$ is not one of the integers listed in the statement.  Bandings illustrating the remaining cases are shown in Figures \ref{fig:bandings-1} and \ref{fig:bandings-2}.
\end{proof}

In our convention $T(2, 3)$ denotes the right-handed trefoil. The statement for the left-handed trefoil is analogous after mirroring.  Note that Corollary~\ref{cor} certifies that each of the lens spaces listed in Theorem~\ref{thm:main} is indeed obtained by a distance one surgery from $L(3,1)$.  We remark that \emph{a priori}, a knot in $L(3,1)$ admitting a distance one lens space surgery to $L(n, 1)$ does not necessarily descend to a band move on $T(2,3)$ under the covering involution. 

When $n$ is even, if the linking number of $T(2,n)$ is $+n/2$, Corollary~\ref{cor} follows as a consequence of the behavior of the signature of a link \cite{Murasugi}. If the linking number is instead $-n/2$, Corollary \ref{cor} follows from the characterization of coherent band surgeries between $T(2,n)$ torus links and certain two-bridge knots in \cite[Theorem 3.1]{DIMS}.  While both coherent and non-coherent band surgeries have biological relevance, more attention in the literature has been paid to the coherent band surgery model (see for example \cite{IshiharaShimokawa, DIMS, Shimokawa, ISV, BuckIshihara2015, BIRS, Stolz2017}). This is due in part to the relative difficulty in working with non-orientable surfaces, as is the case with non-coherent band surgery on knots.

\begin{figure}
\begin{center}
\begin{tikzpicture}

\node[anchor=south west,inner sep=0] at (0,0) {\includegraphics[height=1.4in]{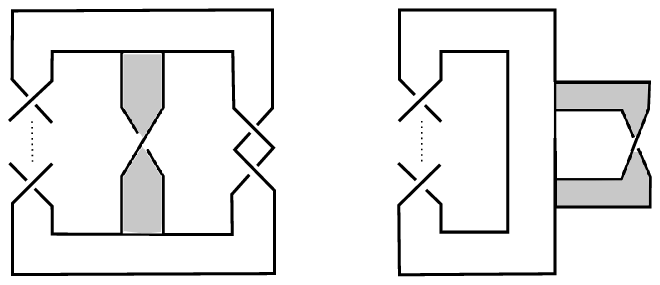} \qquad \includegraphics[height=1.4in]{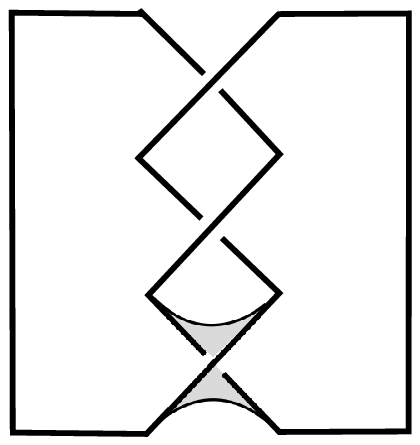} };

%\draw[help lines] (0,0) grid (10,3);
%% This is done by drawing a label above and right
%% of the vertex you're labeling, rather than giving
%% the coordinates of the label itself.
%% Alternately, if you wanted to position the label
%% yourself, you could use a command like:

\node[label=above right:{$n$}] at (-0.4,1.4){};
\node[label=above right:{$n$}] at (4.6,1.4){};

\end{tikzpicture}
\end{center}

\caption{Non-coherent bandings: (Left) $T(2,n-2)$ to $T(2,n+2)$. (Center) $T(2,n)$ to itself. (Right) $T(2, 3)$ to the unknot.}
\label{fig:bandings-1}
\end{figure}

\begin{figure}
\includegraphics[height = 1.4in]{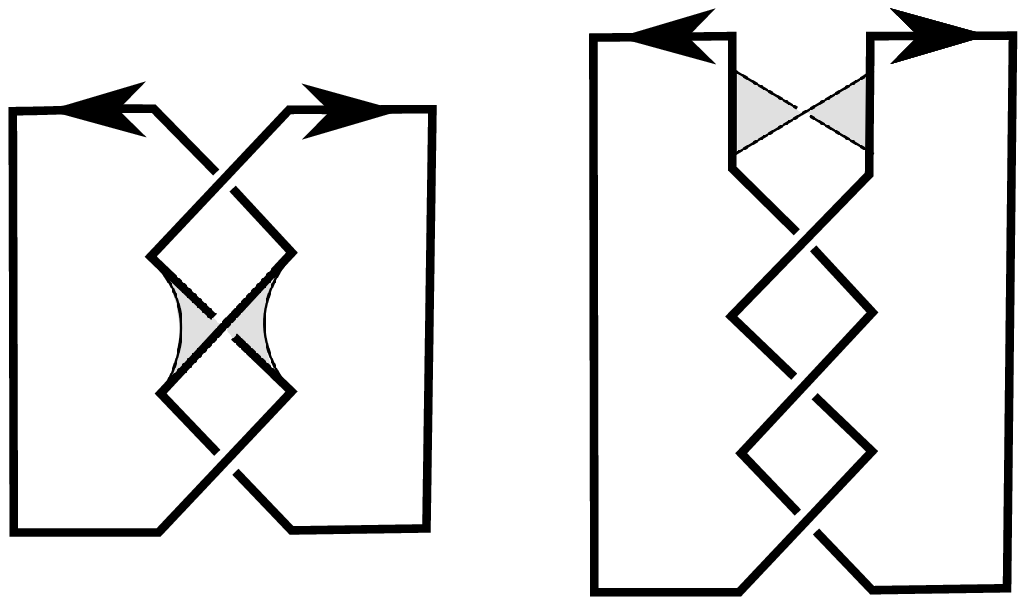}\qquad\qquad \includegraphics[height=1.4in]{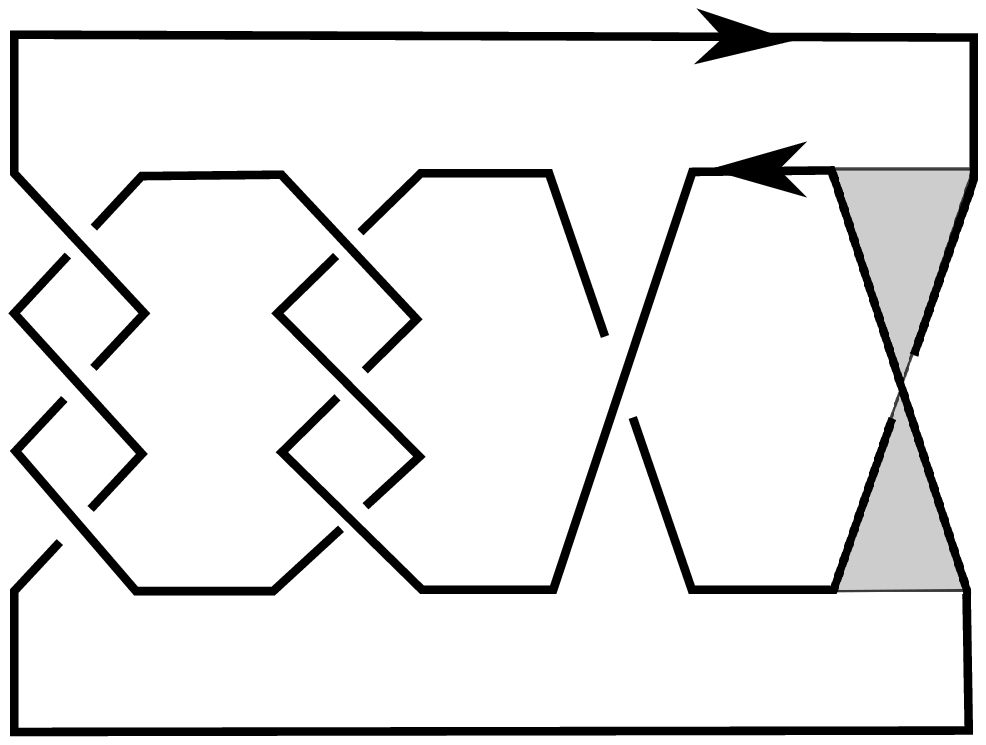}
\caption{Coherent bandings: (Left)  $T(2,3)$ to $T(2, 2)$. (Center) $T(2,3)$ to $T(2,4)$. (Right) $T(2, -6)$ to $T(2,3)$ (see also \cite[Theorem 5.10]{DIMS}).}
\label{fig:bandings-2}
\end{figure}

\textbf{Overview of main result.} The key ingredients in the proof of Theorem \ref{thm:main} are a set of formulas, namely \cite[Proposition 1.6]{NiWu} and its generalizations in Propositions \ref{prop:3k-1-surgery-formula} and \ref{prop:3k+1-surgery-formula}, which describe the behavior of $d$-invariants under certain Dehn surgeries. Recall that a \emph{$d$-invariant} or \emph{correction term} is an invariant of the pair $(Y, \mathfrak{t})$, where $Y$ is an oriented rational homology sphere and $\mathfrak{t}$ is an element of $\spinc(Y)\cong H^2(Y; \Z)$.  More generally, each $d$-invariant is a $\spinc$ rational homology cobordism invariant. This invariant takes the form of a rational number given by the minimal grading of an element in a distinguished submodule of the Heegaard Floer homology, $HF^{+}(Y, \mathfrak{t})$ \cite{OSAbs}.  Work of Ni-Wu \cite{NiWu} relates the $d$-invariants of surgeries along a knot $K$ in $S^3$, or more generally a null-homologous knot in an L-space, with a sequence of non-negative integer-valued invariants $V_i$, due to Rasmussen (see for reference the \emph{local h-invariants} in \cite{RasmussenThesis} or \cite{NiWu}).  

With this we now outline the proof of Theorem \ref{thm:main}. Suppose that $L(n, 1)$ is obtained by surgery along a knot $K$ in $L(3, 1)$. As is explained in Lemma \ref{lem:homology-1}, the class of $|n|$ modulo 3 determines whether or not $K$ is homologically essential. When $n\equiv 0 $ (mod 3), we have that $K$ is null-homologous. In this case, we take advantage of the Dehn surgery formula due to Ni-Wu mentioned above and a result of Rasmussen \cite[Proposition 7.6]{RasmussenThesis} which bounds the difference in the integers $V_i$ as $i$ varies. Then by comparing this to a direct computation of the correction terms for the lens spaces of current interest, we obstruct a surgery from $L(3,1)$ to $L(n,1)$ for $n \neq 3$ or $-6$. 

When $|n|\equiv \pm1 \pmod{3}$, we must generalize the correction term surgery formula of Ni-Wu to a setting where $K$ is homologically essential. The technical work related to this generalization makes use of the mapping cone formula for rationally null-homologous knots \cite{OSRational}, and is contained in Section \ref{sec:mappingcone}. This surgery formula is summarized in Propositions~\ref{prop:3k-1-surgery-formula} and \ref{prop:3k+1-surgery-formula}, which we then use in a similar manner as in the null-homologous case. We find that among the oriented lens spaces of order $\pm 1$ modulo 3, $\pm L(2, 1)$, $L(4,1)$ and $L(7,1)$ are the only nontrivial lens spaces with a distance one surgery from $L(3,1)$, completing the proof of Theorem~\ref{thm:main}.

%When $|n|\equiv -1 $ (mod 3), we are able to circumvent the need for these surgery formulas. We instead use the purported distance one surgery to construct a negative-definite four-manifold with boundary $L(n,1)$ .  The topology of a negative-definite four-manifold constrains the $d$-invariants of its boundary (see \cite[Theorem 9.6]{OSAbs}).  Computing this for the unique self-conjugate $\spinc$ structure on $L(n,1)$ violates Theorem~\ref{thm:d-lens}.

\subsection*{Outline.} In Section \ref{sec:homology}, we establish some preliminary homological information that will be used throughout and study the $\spinc$ structures on the two-handle cobordisms arising from distance one surgeries. Section \ref{sec:proof} contains the proof of Theorem \ref{thm:main}, separated into the three cases as described above. Section \ref{sec:mappingcone} contains the technical arguments pertaining to Propositions~\ref{prop:3k-1-surgery-formula} and \ref{prop:3k+1-surgery-formula}, which compute $d$-invariants of certain surgeries along a homologically essential knot in $L(3,1)$.  Lastly, in Section \ref{sec:applications} we present the biological motivation for the problem in relation with DNA topology and discuss coherent and non-coherent band surgeries more precisely.

%\pagebreak

\section{Preliminaries}
\label{sec:homology}

\subsection{Homological preliminaries}
\label{subsec:homology}
We begin with some basic homological preliminaries on surgery on knots in $L(3,1)$.  This will give some immediate obstructions to obtaining certain lens spaces by distance one surgeries.  Here we will also set some notation.  All singular homology groups will be taken with $\mathbb{Z}$-coefficients except when specified otherwise.  

Let $Y$ denote a rational homology sphere. First, we will use the torsion linking form on homology:  
\begin{eqnarray*}
	\ell k : H_1(Y) \times H_1(Y) &\to& \mathbb{Q}/\Z. 
\end{eqnarray*}
See \cite{Friedl} for a thorough exposition on this invariant.

In the case that $H_1(Y)$ is a cyclic group, it is enough to specify the linking form by determining the value $\ell k(x, x)$ for a generator $x$ of $H_1(Y)$ and extending by bilinearity. %, since this determines the mapping on all other group elements.
Consequently, if two rational homology spheres $Y_1$ and $Y_2$ have cyclic first homology with linking forms given by $\frac{n}{p}$ and $\frac{m}{p}$, where $p > 0$, then the two forms are equivalent if and only if $n \equiv ma^2 \pmod{p}$ for some integer $a$ with $\gcd(a,p) = 1$.  We take the convention that $L(p,q)$ is obtained by $p/q$-surgery on the unknot, and that the linking form is given by $q/p$.\footnote{We choose this convention to minimize confusion with signs.  The deviation from $-q/p$ to $q/p$ is irrelevant for our purposes, since this change will uniformly switch the sign of each linking form computed in this section.  Because $\ell k_1$ and $\ell k_2$ are equivalent if and only if $-\ell k_1$ and $-\ell k_2$ are equivalent, this will not affect the results.} Following these conventions, $p/q$-surgery on any knot in an arbitrary integer homology sphere has linking form $q/p$ as well.

Let $K$ be any knot in $Y = L(3,1)$. The first homology class of $K$ is either trivial or it generates $H_1(Y) = \Z/3$, in which case we say that $K$ is \emph{homologically essential}. When $K$ is null-homologous, then the surgered manifold $Y_{p/q}(K)$ is well-defined and $H_1(Y_{p/q}(K)) = \Z/3 \oplus \Z/p$. When $K$ is homologically essential, there is a unique such homology class up to a choice of an orientation on $K$. The exterior of $K$ is denoted $M = Y-\mathcal{N}(K)$ and because $K$ is homologically essential, $H_1(M) = \Z$. Recall that the rational longitude $\ell$ is the unique slope on $\partial M$ which is torsion in $H_1(M)$. In our case, the rational longitude $\ell$ is null-homologous in $M$. We write $m$ for a choice of dual peripheral curve to $\ell$ and take $(m, \ell)$ as a basis for $H_1(\partial M)$. Let $M(pm + q \ell)$ denote the Dehn filling of $M$ along the curve $pm + q\ell$, where $gcd(p,q)=1$. It follows that $H_1(M(pm + q \ell)) = \Z/p$ and that the linking form of $M(pm+q \ell)$ is equivalent to $q/p$ when $p \neq 0$.  Indeed, $M(pm + q \ell)$ is obtained by $p/q$-surgery on a knot in an integer homology sphere, namely the core of the Dehn filling $M(m)$.    

Recall that we are interested in the distance one surgeries to lens spaces of the form $L(n,1)$.  Therefore, we first study when distance one surgery results in a three-manifold with cyclic first homology.  We begin with an elementary homological lemma.

\begin{lemma}\label{lem:homology-1}
Fix a non-zero integer $n$.  Suppose that $Y'$ is obtained from $Y = L(3,1)$ by a distance one surgery on a knot $K$ and that $H_1(Y') = \Z/n$.  
\begin{enumerate}
\item\label{homology-1-1}
 If $n = 3k \pm 1$, then $K$ is homologically essential.

\item\label{homology-1-2} If $K$ is homologically essential, the slope of the meridian on $M$ is $3m + (3r+1)\ell$ for some integer $r$.  Furthermore, there is a choice of $m$ such that $r = 0$.  

\item\label{homology-1-3} With the meridian on $M$ given by $3m+ \ell$ as above, then if $n = 3k+1$ (respectively $n = 3k-1$), the slope inducing $Y'$ on $M$ is $(3k+1)m + k \ell$ (respectively $(3k-1)m + k \ell$). 

\item\label{homology-1-4}
 If $n = 3k$, then $K$ is null-homologous and the surgery coefficient is $\pm k$.  Furthermore, $\gcd(k,3) = 1$.   
\end{enumerate}  
\end{lemma}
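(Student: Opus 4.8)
The plan is to establish the four parts in order, relying only on homological bookkeeping built on two facts already recorded in this subsection: if $K$ is null-homologous then integral surgery with coefficient $a$ produces $H_1 = \Z/3 \oplus \Z/a$, and if $K$ is homologically essential then $H_1(M(pm+q\ell)) = \Z/p$. I will also use that, since $(m,\ell)$ is a basis of $H_1(\partial M)$, the distance between slopes $am+b\ell$ and $cm+d\ell$ equals $|ad-bc|$. For part (\ref{homology-1-1}) I argue by contraposition: were $K$ null-homologous, a slope at distance one from the meridian would be the slope of an integral surgery with some coefficient $a\in\Z$, so $H_1(Y') = \Z/3\oplus\Z/a$; this group is cyclic only when $3\nmid a$, in which case its order $3|a|$ is divisible by $3$. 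Since $n = 3k\pm1$ is prime to $3$, this is impossible, and hence $K$ is homologically essential.

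For part (\ref{homology-1-2}) I would write the meridian of $K$ as $\mu = am+b\ell$; it is primitive, so $\gcd(a,b)=1$, and since filling $M$ along $\mu$ recovers $Y=L(3,1)$ we obtain $\Z/3 = H_1(M(\mu)) = \Z/a$, forcing $|a|=3$. Using that $-\mu$ represents the same slope and that $m$ may be replaced by $-m$, I can arrange $a=3$ and then, because $\gcd(b,3)=1$, also $b\equiv 1 \pmod 3$, say $b=3r+1$; finally replacing $m$ by $m+r\ell$ (still a dual basis) rewrites the slope as $3m+\ell$. Part (\ref{homology-1-3}) then follows immediately: with $\mu=3m+\ell$, write the surgery slope as $\gamma = \alpha m+\beta\ell$; the distance-one condition reads $|\alpha-3\beta|=1$ (which also forces $\gcd(\alpha,\beta)=1$), while $\Z/n = H_1(Y') = H_1(M(\gamma)) = \Z/\alpha$ gives $|\alpha|=|n|$. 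For $n=3k+1$ these two constraints have the unique solution $(\alpha,\beta)=\pm(3k+1,k)$, and since $\gamma$ and $-\gamma$ define the same slope this yields $\gamma = (3k+1)m+k\ell$; the case $n=3k-1$ is identical and gives $\gamma=(3k-1)m+k\ell$.

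For part (\ref{homology-1-4}), suppose first that $K$ is homologically essential. Then parts (\ref{homology-1-2})--(\ref{homology-1-3}) apply and the $m$-coordinate $\alpha$ of the surgery slope satisfies $\alpha\equiv\pm1\pmod 3$, so $|H_1(Y')| = |\alpha|$ is prime to $3$ — impossible, since it equals $|3k|$. Hence $K$ is null-homologous; a distance-one surgery is then integral with some coefficient $a$, and $H_1(Y') = \Z/3\oplus\Z/a \cong \Z/3k$. Comparing orders gives $|a|=|k|$, and cyclicity of $\Z/3\oplus\Z/a$ forces $\gcd(3,k)=1$; so the surgery coefficient is $\pm k$.

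I do not expect a genuine obstacle here, since every computation is elementary. The one step requiring care is the normalization in part (\ref{homology-1-2}): the dual curve $m$ is not canonical, so one must verify that each change of basis used preserves a dual basis of $H_1(\partial M)$ and carefully track the residue of the $\ell$-coordinate of $\mu$ modulo $3$, which can be brought to $1$ only by combining the sign choices available for the slope representative $\pm\mu$ and for the curve $\pm m$. Everything else is dictated by comparing orders of first homology groups.
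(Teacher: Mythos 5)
Parts (\ref{homology-1-1}), (\ref{homology-1-3}) and (\ref{homology-1-4}) of your argument are correct and essentially identical to the paper's: comparing $\Z/3\oplus\Z/a$ with a cyclic group of order prime to $3$, computing the distance as a determinant, and noting that a homologically essential knot would force the $m$-coefficient of the filling slope to be prime to $3$. The problem is in part (\ref{homology-1-2}), at exactly the step you flagged as delicate, and your resolution of it is the wrong one.

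You claim the residue mod $3$ of the $\ell$-coefficient of the meridian can be brought to $1$ by replacing $m$ with $-m$ (together with $\mu\mapsto-\mu$). But that residue is not a free normalization: it is precisely the orientation-sensitive datum that distinguishes $L(3,1)$ from $-L(3,1)\cong L(3,2)$, once $(m,\ell)$ is normalized so that the facts recorded before the lemma hold, namely that $M(pm+q\ell)$ is $p/q$-surgery on the core of the integer homology sphere $M(m)$ and hence has linking form $q/p$. Replacing $m$ by $-m$ alone (or $\ell$ by $-\ell$ alone) destroys this convention: in the flipped basis the linking form of $M(pm+q\ell)$ becomes $-q/p$. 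The only re-choices compatible with the convention are $m\mapsto m+r\ell$ and the simultaneous flip of $m$ and $\ell$, and under these the residue is invariant, so it cannot be switched from $2$ to $1$. Consequently your argument proves only the convention-free statement ``there is some basis in which the meridian is $3m+\ell$,'' which is not what the lemma must deliver: Lemma~\ref{lem:homology-2} uses part (\ref{homology-1-3}) to assert that the linking form of $Y'$ equals $k/(3k-1)$, and Section~\ref{sec:mappingcone} uses $[\mu]=3[m]$ rather than $-3[m]$; both need the residue statement relative to the fixed convention, and with your normalization these downstream sign-sensitive deductions would be unjustified (indeed, your argument applied verbatim to a knot in $L(3,2)$ would ``prove'' the same conclusion, which is false under the fixed convention). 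The missing ingredient is the paper's torsion linking form comparison: $M(3m+q\ell)$ has linking form $q/3$, which must be equivalent to the linking form $1/3$ of $L(3,1)$, and since $2$ is not a square mod $3$ one concludes $q\equiv 1\pmod{3}$; only after that is the substitution $m'=m+r\ell$ used to reach $3m'+\ell$.
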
 
\begin{proof}

\eqref{homology-1-1}
This follows since surgery on a null-homologous knot in $Y$ has $H_1(Y_{p/q}(K)) = \Z/3 \oplus \Z/p$.

\eqref{homology-1-2}
By the discussion preceding the lemma, we have that the desired slope must be $3m + q \ell$ for some $q$ relatively prime to 3.  In this case, $M(3m + q \ell)$ has linking form equivalent to $1/3$ or $2/3$, depending on whether $q \equiv 1$ or $2 \pmod{3}$.  Since 2 is not a square mod 3, we see that the linking form $2/3$ is not equivalent to that of $1/3$, which is the linking form of $L(3,1)$.  Therefore, $q \equiv 1 \pmod{3}$ and the meridian is $3m + (3r+1)\ell$ for some $r$.  By instead using the peripheral curve $m' = m + r\ell$, which is still dual to $\ell$, we see that the meridian is given by $3m' + \ell$.

\eqref{homology-1-3} By the previous item, we may choose $m$ such that the meridional slope of $K$ on $M$ is given by $3m + \ell$.  Now write the slope on $M$ yielding $Y'$ as $(3k\pm 1)m + q \ell$.  In order for this slope to be distance one from $3m + \ell$, we must have that $q = k$.    
%\item First suppose that $n = 3k -1$ with $k > 0$.  Then, $M((3k-1)m + k \ell)$ and the linking form of $L(n,1)$ is given by $k/(3k-1)$.  Since the linking form of $L(n,1)$ is also $1/(3k-1)$, we have that $k$ is a square mod $3k-1$.  Equivalently, 3 is a square mod $3k-1$. %(This is because 3 is the multiplicative inverse of $k$. Thus 3 is a quadratic residue of $3k-1$, and hence $3$ is a residue for any $p^k$ dividing $3k-1$, with $p$ prime. 
%This implies that for any prime $p$ dividing $3k-1$, 3 is a residue mod $p$, thus we have that $p \equiv \pm 1 \pmod{12}$.  This implies that $3k-1 \equiv \pm 1 \pmod{12}$.  The case that $n < 0$ is similar.  \textcolor{red}{If $n > 0$, actually we get that $1 \pmod{12}$ and if $n < 0$, we get $-1 \pmod{12}$.}    

\eqref{homology-1-4} Note that {\em if} $K$ is null-homologous, then the other two conclusions easily hold since $H_1(Y') = \Z/3 \oplus \Z/k$.  Therefore, we must show that $K$ cannot be homologically essential.  If $K$ was essential, then the slope on the exterior would be of the form $3km + s \ell$ for some integer $s$.  The distance from the meridian is then divisible by 3, which is a contradiction.
\end{proof}

%Note that in the first case of the above lemma, after possibly replacing $m$ with another peripheral curve which is dual to $\ell$, we can assume that the induced slope on $M$ by the meridian of $K$ is of the form $3m + \ell$.  
In this next lemma, we use the linking form to obtain a surgery obstruction.  

\begin{comment}
\begin{remark}
The argument for Lemma~\ref{lem:homology-1} generalizes to the following statement.  Let $X$ be a three-manifold with $|H_1(X)| = p$, where $p$ is prime.  Then, $X$ does not admit a distance one surgery to any rational homology sphere $X'$ with $H_1(X) = \Z/kp$ where $k \neq 1$.
\end{remark}
\end{comment}

%In the case of $n = 3k \pm 1$, we can use the linking form to obtain further obstructions to lens space surgeries.  
\begin{lemma}
\label{lem:homology-2}
Fix a non-zero odd integer $n$.  Let $K$ be a knot in $Y = L(3,1)$ with a distance one surgery to $Y'$ having $H_1(Y') = \Z/n$ and linking form equivalent to $\frac{sgn(n)}{|n|}$.    
%\begin{enumerate}
%\item If $n = 3k-1$, then $n \equiv 1,11 \pmod{12}$.  \textcolor{red}{We don't use this anywhere.}  
%\item\label{homology-2:2} 
If $n \equiv 1 \pmod{3}$, then $n > 0$.      
%\end{enumerate}
\end{lemma}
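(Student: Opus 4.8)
The plan is to combine the structural output of Lemma~\ref{lem:homology-1} with the equivalence criterion for cyclic linking forms recorded above, thereby reducing the statement to the classical fact governing when $-3$ is a quadratic residue. Since $n \equiv 1 \pmod{3}$, Lemma~\ref{lem:homology-1}\eqref{homology-1-1} shows that $K$ is homologically essential, so parts~\eqref{homology-1-2} and~\eqref{homology-1-3} apply: after a suitable choice of the dual curve $m$, the meridian of $K$ on $M$ is $3m + \ell$ and the slope on $M$ producing $Y'$ is $(3k+1)m + k\ell$, where $n = 3k+1$. By the discussion preceding Lemma~\ref{lem:homology-1}, the filling $Y' = M((3k+1)m + k\ell)$ is $(n/k)$-surgery on the core of the integer homology sphere $M(m)$, and hence its linking form is equivalent to $k/n \in \mathbb{Q}/\mathbb{Z}$.

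Next I would impose the hypothesis that the linking form of $Y'$ is equivalent to $\mathrm{sgn}(n)/|n| = 1/n \in \mathbb{Q}/\mathbb{Z}$. By the criterion for equivalence of cyclic linking forms, $k/n \sim 1/n$ holds exactly when $k a^2 \equiv 1 \pmod{|n|}$ for some $a$ coprime to $|n|$; in particular $k$ is a square modulo $|n|$, say $k \equiv b^2 \pmod{|n|}$. Since $n = 3k+1$ we have $3k \equiv -1 \pmod{|n|}$, and $\gcd(3,|n|) = 1$ because $\gcd(3,n)=1$; multiplying the congruence $k \equiv b^2$ by $9$ then yields $-3 \equiv 9k \equiv (3b)^2 \pmod{|n|}$. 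Thus the linking-form hypothesis forces $-3$ to be a quadratic residue modulo $|n|$.

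Finally I would rule out $n < 0$ by contradiction. If $n < 0$ then, since $n \equiv 1 \pmod{3}$, we have $|n| = -n \equiv 2 \pmod{3}$. As $n$ is odd and coprime to $3$, the integer $|n|$ must have a prime divisor $p$ with $p \equiv 2 \pmod{3}$ (otherwise every prime divisor is $\equiv 1 \pmod 3$ and $|n| \equiv 1 \pmod 3$), and such a $p$ is odd with $p \neq 3$. But $-3$ is a quadratic residue modulo an odd prime $p \neq 3$ if and only if $p \equiv 1 \pmod{3}$, so $-3$ is a non-residue modulo $p$, hence modulo $|n|$ --- contradicting the previous paragraph. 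Therefore $n > 0$.

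The one point that requires care is the orientation bookkeeping behind the second paragraph: one must check that the linking form of $Y'$ is $k/n$ and not $-k/n$ as an element of $\mathbb{Q}/\mathbb{Z}$, and compare it correctly against the normalization $\mathrm{sgn}(n)/|n|$ appearing in the hypothesis. This sign is exactly what breaks the apparent symmetry between $n$ and $-n$: once $n \equiv 1 \pmod 3$, the sign of $n$ determines the residue of $|n|$ modulo $3$, and this is what decides whether $-3$ can be a square modulo $|n|$.
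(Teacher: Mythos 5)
Your proof is correct and follows essentially the same route as the paper: you use Lemma~\ref{lem:homology-1}\eqref{homology-1-3} together with the linking-form convention to identify the form of $Y'$ as $k/n$, compare it with the hypothesized form $1/n$ to conclude that $-3$ is a square modulo $|n|$, and then invoke the quadratic-reciprocity fact that $-3$ is a residue mod an odd prime $p\neq 3$ only when $p\equiv 1 \pmod 3$, contradicting $|n|\equiv 2 \pmod 3$ when $n<0$. The paper's manipulation (writing $n=1-3j$ and inverting $-j$ to get $-3$) is just a cosmetic variant of your multiplication by $9$.
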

\begin{proof}
Suppose that $n < 0$. Write $n=1-3j$ with $j>0$. By assumption, the linking form of $Y'$ is $-1/(3j-1)$.  By Lemma~\ref{lem:homology-1}\eqref{homology-1-3}, the linking form of $Y'$ is also given by $j/(3j-1)$.  Consequently, $-j$ is a square modulo $3j-1$ or equivalently, $-3$ is a square modulo $3j-1$, as $-3$ is the inverse of $-j$. 
Because $n$ is odd, the law of quadratic reciprocity implies that for any prime $p$ dividing $3j-1$, we have that $p \equiv 1 \pmod{3}$.  This contradicts the fact that $3j-1 \equiv -1 \pmod{3}$.
\end{proof}

\begin{remark} By an argument analogous to Lemma \ref{lem:homology-2}, one can prove that if $n = 3k-1$ is odd, then $n \equiv 1$ or $11 \pmod{12}$.
\end{remark}

Lemma~\ref{lem:homology-2} does not hold if $n$ is even.  This can be seen since $-L(2,1) \cong L(2,1)$ is obtained from a distance one surgery on a core of the genus one Heegaard splitting of $L(3,1)$.  In Section~\ref{subsec:homology-d} we will be able to obtain a similar obstruction in the case that $n$ is even.

\subsection{The four-dimensional perspective}
Given a distance one surgery between two three-manifolds, we let $W$ denote the associated two-handle cobordism. For details on the framed surgery diagrams and associated four-manifold invariants used below, see \cite{GS}. 

\begin{lemma}\label{lem:cobordism}
Suppose that $Y'$ is obtained from a distance one surgery on $L(3,1)$.   
\begin{enumerate}
\item\label{cob:definite} If $|H_1(Y')| = 3k-1$, then $W$ is positive-definite, whereas if $|H_1(Y')| = 3k+1$, then $W$ is negative-definite. 
\item\label{cob:spin} The order of $H_1(Y')$ is even if and only if $W$ is $\spin$.  
\end{enumerate}
\end{lemma}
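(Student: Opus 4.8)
Both parts are statements about the rank-one intersection form of the two-handle cobordism $W$: writing $Q_W=(q_W)$ on $H_2(W;\Z)\cong\Z$, part (2) asks for the parity of $q_W$ and part (1) for its sign. The plan is to compute $q_W$ by capping off the incoming end of $W$ with a standard positive-definite filling. Let $X$ be the disk bundle over $S^2$ with Euler number $3$, so $\partial X=L(3,1)$, $Q_X=(3)$, and $\sigma(X)=1$; set $Z=X\cup_{L(3,1)}W$, a compact oriented four-manifold with $\partial Z=Y'$.

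Next comes homological bookkeeping. Since $W$ is obtained from $L(3,1)\times I$ by attaching a single two-handle along the core $K$ of the surgery solid torus, $H_2(W;\Z)\cong\Z$, and $\pi_1(W)$ is $\pi_1(L(3,1))=\Z/3$ modulo the normal closure of $[K]$: it is trivial when $K$ is homologically essential (then $[K]$ generates) and is $\Z/3$ when $K$ is null-homologous. By Lemma~\ref{lem:homology-1}, the hypothesis of (1) forces $K$ to be homologically essential, and then, gluing along the rational homology sphere $L(3,1)$, Mayer--Vietoris gives $H_1(Z;\Z)=0$, $b_2(Z)=1+b_2(W)=2$, and an inclusion $H_2(X;\Z)\oplus H_2(W;\Z)\hookrightarrow H_2(Z;\Z)$ of index $3$. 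On the sublattice the form is $(3)\oplus(q_W)$ (the two summands are represented by disjoint surfaces in the interiors of $X$ and $W$), so comparing discriminants with the full lattice yields $3q_W=9\det Q_Z$, i.e.\ $q_W=3\det Q_Z$; moreover, since $H_1(Z;\Z)=0$ and $Y'$ is a rational homology sphere, $|\det Q_Z|=|H_1(Y')|$. When $K$ is null-homologous instead, part (1) is vacuous since $|H_1(Y')|$ is then divisible by $3$, and $q_W$ is simply the integer surgery coefficient $\pm k$ with $|H_1(Y')|=3|k|$.

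Part (2) is now quick. In both cases $\mathrm{Ext}^1(H_1(W;\Z),\Z/2)=0$, so $H^2(W;\Z/2)=\mathrm{Hom}(H_2(W;\Z),\Z/2)$, and since $\langle w_2(W),x\rangle\equiv x\cdot x\pmod 2$ for surface classes $x$, the cobordism $W$ is $\spin$ if and only if $q_W$ is even. As $q_W=3\det Q_Z$ in the essential case and $q_W=\pm k$ in the null-homologous case, this parity agrees with that of $|\det Q_Z|=|H_1(Y')|$, respectively of $3|k|=|H_1(Y')|$; hence $W$ is $\spin$ if and only if $|H_1(Y')|$ is even.

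For part (1) the remaining task --- and the one I expect to be the real obstacle --- is to pin down the \emph{sign} of $\det Q_Z$, which is not detected by the homology of $Z$ alone. The key is a congruence coming from the square-$3$ class: the image $v\in H_2(Z;\Z)$ of the zero section of $X$ has $v\cdot v=3$, so $v$ is not $3$-divisible, and its reduction is a nonzero isotropic vector for $Q_Z$ over $\FF_3$; since $3\nmid|H_1(Y')|=|\det Q_Z|$ here, $Q_Z$ is nondegenerate mod $3$, hence hyperbolic, so $\det Q_Z\equiv-1\equiv 2\pmod 3$. Because $|\det Q_Z|=|H_1(Y')|$ is $\equiv-1\pmod 3$ exactly when $|H_1(Y')|=3k-1$ and $\equiv 1\pmod 3$ exactly when $|H_1(Y')|=3k+1$, matching against $\det Q_Z\equiv 2$ forces $\det Q_Z>0$ in the former case and $\det Q_Z<0$ in the latter. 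Since $q_W=3\det Q_Z\neq 0$, the rank-one form $Q_W$ is definite of the same sign as $\det Q_Z$, so $W$ is positive-definite when $|H_1(Y')|=3k-1$ and negative-definite when $|H_1(Y')|=3k+1$. (Equivalently, $Q_Z$ diagonalizes over $\mathbb{Q}$ as $\langle 3\rangle\oplus\langle\det Q_Z/3\rangle$, so $\sigma(Z)=1+\mathrm{sign}(\det Q_Z)$, and Novikov additivity gives $\sigma(W)=\mathrm{sign}(\det Q_Z)$.)
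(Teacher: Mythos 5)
Your argument is correct, and it reaches both conclusions by a route that differs from the paper's in the two places where real work happens. For part (1) you and the paper both cap $L(3,1)$ with the Euler-number-$3$ disk bundle $X$ (the paper's $N$), but the paper then handleslides the homologically essential knot over the unknot to present $Z = N \cup W$ by an explicit two-component link with linking matrix $\begin{pmatrix} 3 & 1 \\ 1 & c \end{pmatrix}$ and reads off $b_2^+(Z)$ and $|H_1(Y')| = |3c-1|$ directly from the sign of $c$; you instead avoid any surgery presentation and pin the sign lattice-theoretically: the index-$3$ orthogonal sublattice $(3)\oplus(q_W)$ gives $q_W = 3\det Q_Z$, and the mod-$3$ isotropic vector coming from the zero section forces $Q_Z\otimes\FF_3$ to be hyperbolic, hence $\det Q_Z \equiv 2 \pmod 3$, which fixes the sign against $|\det Q_Z| = |H_1(Y')|$. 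For part (2) the paper glues $W$ to the even spin filling of $L(3,1)$ (the Hopf link with $-2$-framings) and tests evenness of the resulting $3\times 3$ form, whereas you work intrinsically in $W$: since $\mathrm{Ext}(H_1(W),\Z/2)=0$, spin-ness is equivalent to $q_W$ being even, and the parity of $q_W$ is already known from your bookkeeping in both the essential and null-homologous cases. Your approach buys independence from choosing a particular handle presentation (no handleslide normalization needed) and makes the sign phenomenon visibly a discriminant/quadratic-form fact, at the cost of more lattice care; the paper's approach is more concrete and gets both the definiteness and the order of $H_1(Y')$ from one matrix. One point you should make explicit: for the index-$3$ and discriminant comparison $3q_W = 9\det Q_Z$ you need $H_2(Z;\Z)$ torsion-free, which the Mayer--Vietoris sequence alone does not give; it does follow from $H_1(Z)=0$ together with $\partial Z$ being a rational homology sphere (via $H_2(Z)\hookrightarrow H_2(Z,\partial Z)\cong H^2(Z)\cong \mathrm{Hom}(H_2(Z),\Z)$), the same standard argument underlying your identity $|\det Q_Z| = |H_1(Y')|$.
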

\begin{proof}
\eqref{cob:definite} In either case, Lemma~\ref{lem:homology-1} implies that $Y'$ is obtained by integral surgery on a homologically essential knot $K$ in $L(3,1)$.  First, $L(3,1)$ is the boundary of a four-manifold $N$, which is a $+3$-framed two-handle attached to $B^4$ along an unknot.  Let $Z$ denote $N \cup W$.  Since $b_2^\pm(Z) = b_2^\pm(N) + b_2^\pm(W)$, we see that $W$ is positive-definite (respectively negative-definite) if and only if $b_2^+(Z)$ is equal to 2 (respectively 1).  

Since $K$ is homologically essential, after possibly reversing the orientation of $K$ and handlesliding $K$ over the unknot, we may present $Y'$ by surgery on a two-component link with linking matrix
\[
Q = \begin{pmatrix}
3 & 1 \\ 1 & c
\end{pmatrix},   
\]
which implies that the order of $H_1(Y')$ is $|3c-1|$.  Since the intersection form of $Z$ is presented by $Q$, we see that $b_2^+(Z)$ equals 2 (respectively 1) if and only if $c > 0$ (respectively $c \leq  0$).  The claim now follows.  

\eqref{cob:spin} We will use the fact that an oriented four-manifold whose first homology has no 2-torsion is $\spin$ if and only if its intersection form is even.  First, note that $H_1(W)$ is a quotient of $\Z/3$, so $H_1(W;\Z/2) = 0$.  Next, view $L(3,1)$ as the boundary of the $\spin$ four-manifold $X$ obtained from attaching $-2$-framed two-handles to $B^4$ along the Hopf link.  This is indeed $\spin$, because $X$ is simply-connected and has even intersection form.  After attaching $W$ to $X$, we obtain a presentation for the intersection form of $W \cup X$:
\[
Q_{W \cup X} = \begin{pmatrix}
-2 & 1 & a \\ 1 & - 2 & b \\ a & b & c
\end{pmatrix}.
\]
Since this matrix presents $H_1(Y')$, we compute that $|H_1(Y')|$ is even if and only if $c$ is even if and only if the intersection form of $W \cup X$ is even. Since $X$ is $\spin$ and we are attaching $W$ along a $\Z/2$-homology sphere, we see that the simply-connected four-manifold $W \cup X$ is $\spin$ if and only if $W$ is $\spin$.  Consequently, $|H_1(Y')|$ is even if and only if $W$ is $\spin$.  \end{proof}

\subsection{$d$-invariants, lens spaces, and $\spin$ manifolds}\label{subsec:homology-d}
As mentioned in the introduction, the main invariant that we will use is the $d$-invariant, $d(Y,\mft)$, of a $\spinc$ rational homology sphere $(Y,\mft)$. These invariants are intrinsically related with the intersection form of any smooth, definite four-manifold bounding $Y$ \cite{OSAbs}. 
% Note for self: For a fixed spinc structure the d-invariant is defined as equal the max of K^2 +|G|/4, over all characteristic vectors K for Q, where the intersection form is encoded in Q. They say that the relationship between diagonalizability of definite, unimodular forms Q and the maximal value, over all characteristic vectors K for Q, of the quantity K2+ rk is explained in Elkies. 
In some sense, the $d$-invariants can be seen as a refinement of the torsion linking form on homology. For homology lens spaces, this notion can be made more precise as in \cite[Lemma 2.2]{LidmanSivek}. %In our current usage, the linking form provides only mild obstructions to distance one Dehn fillings (mentioned above), whereas the $d$-invariants can be harnessed for a complete classification.

We assume familiarity with the Heegaard Floer package and the $d$-invariants of rational homology spheres, referring the reader to \cite{OSAbs} for details.  We will heavily rely on the following recursive formula for the $d$-invariants of a lens space. 
\begin{theorem}[Ozsv\'ath-Szab\'o, Proposition 4.8 in \cite{OSAbs}]\label{thm:d-lens}
Let $p>q > 0$ be relatively prime integers.  Then, there exists an identification $\spinc(L(p,q)) \cong \Z/p$ such that 
\begin{equation}\label{eq:d-lens}
d(L(p,q),i) = -\frac{1}{4} + \frac{(2i+1-p-q)^2}{4pq} - d(L(q,r),j)
\end{equation}
for $0 \leq i < p+q$.  Here, $r$ and $j$ are the reductions of $p$ and $i \pmod{q}$ respectively.  
\end{theorem}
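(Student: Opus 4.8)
The plan is to realize the lens spaces as boundaries of negative-definite linear plumbings, read the correction terms off the intersection forms, and produce the recursion by ``deleting an end vertex.'' Write the Hirzebruch--Jung continued fraction $p/q = [a_1, a_2, \dots, a_N]^-$ with every $a_i \ge 2$ (so $a_1 = \lceil p/q \rceil$), and let $X_{p,q}$ be the linear plumbing four-manifold on vertices $v_1, \dots, v_N$ with weights $-a_1, \dots, -a_N$. It is negative definite with $\partial X_{p,q} = -L(p,q)$, and I take $\spinc(L(p,q)) \cong \Z/p$ to be the identification induced by this plumbing (equivalently, by realizing $L(p,q)$ as $p/q$-surgery on the unknot, as in \cite{OSAbs}). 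Since $L(p,q)$ is an L-space, the pair $(X_{p,q}, \partial X_{p,q})$ is \emph{sharp}: the correction term is computed by the intersection form $Q_{p,q}$ alone, via
\[
d\bigl(-L(p,q), i\bigr) \;=\; \max_{K}\ \frac{K^2 + N}{4},
\]
the maximum ranging over characteristic covectors $K$ of $X_{p,q}$ restricting to the $i$-th $\spinc$ structure on $\partial X_{p,q}$. Since $d(L(p,q),i) = -d(-L(p,q),i)$, everything is reduced to analysing this finite extremal problem on the lattice $(\Z^N, Q_{p,q})$ and its dependence on $i$.

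For the recursion, observe that $X_{q,r'}$, the plumbing attached to $q/r' = [a_2, \dots, a_N]^-$ with $r' = q - (p \bmod q)$, is precisely $X_{p,q}$ with the end vertex $v_1$ deleted; equivalently $X_{p,q}$ is a single two-handle attachment onto $X_{q,r'}$. Since $r' \equiv -r \pmod q$, its boundary is $\partial X_{q,r'} = -L(q, r') = -L(q, q-r) = L(q, r)$. Now fix a characteristic covector of $Q_{q,r'}$ restricting to a $\spinc$ structure $j$ on $L(q,r)$ and extend it over $v_1$; because $v_1$ is an end of the chain, maximizing $\tfrac{K^2 + N}{4}$ over the $v_1$-coordinate is a one-variable quadratic optimization whose optimal value splits as a scalar term plus the corresponding quantity $\tfrac{(K')^2 + (N-1)}{4}$ for $X_{q,r'}$. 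Maximizing over the remaining coordinates turns the latter into $d(L(q,r), j)$, while a cofactor computation (using $\lvert\det Q_{p,q}\rvert = p$ and $\lvert\det Q_{q,r'}\rvert = q$) evaluates the scalar term as $\tfrac14 - \tfrac{(2i+1-p-q)^2}{4pq}$ once the boundary $\spinc$ structure is labelled $i$. Hence $d(-L(p,q),i) = \tfrac14 - \tfrac{(2i+1-p-q)^2}{4pq} + d(L(q,r),j)$, and negating gives \eqref{eq:d-lens} exactly; the base case $q = 1$ (so $N = 1$, $r = j = 0$, $L(1,0) = S^3$, $d(S^3) = 0$) collapses the formula to the familiar $d(L(p,1),i) = -\tfrac14 + \tfrac{(2i - p)^2}{4p}$, which one checks directly.

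I expect the Heegaard Floer input — that lens spaces are L-spaces and that their standard linear plumbings are sharp, so that the $\max \tfrac{K^2 + N}{4}$ formula applies — to be routine and citable; the real work is bookkeeping. The delicate points are: choosing the identifications $\spinc(L(p,q)) \cong \Z/p$ and $\spinc(L(q,r)) \cong \Z/q$ compatibly, so that a covector on $X_{p,q}$ restricting to $i$ restricts to $j \equiv i \pmod q$ after deleting $v_1$ (and so that the two representatives $i$, $i+p$ of a given $\spinc$ structure yield the same value); handling the ``nearest-integer'' discreteness in the one-variable optimization, which is where sharpness really does its work and where the constant $2i+1-p-q$ — the shift symmetrizing the quadratic form, consistent with $d(Y,\mathfrak{s}) = d(Y,\bar{\mathfrak{s}})$ — gets pinned down; and keeping orientations straight, since the continued-fraction reduction naturally produces $L(q, q-r) = -L(q,r)$, which makes $\partial X_{q,r'} = L(q,r)$ positively, so that the minus sign in front of $d(L(q,r),j)$ in \eqref{eq:d-lens} comes simply from $d(-L(p,q),i) = -d(L(p,q),i)$. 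An essentially equivalent route avoids four-manifolds and instead inducts on Ozsv\'ath--Szab\'o's reduction-sequence computation of $HF^+$ of lens spaces, where deleting $v_1$ is one step of the Euclidean algorithm on $p/q$; the arithmetic is the same either way.
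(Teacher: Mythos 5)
This theorem is not proved in the paper at all: it is imported verbatim from \cite{OSAbs} (Proposition 4.8), so there is no internal argument to compare against. Your route --- negative-definite linear plumbings, sharpness, and an end-vertex deletion recursion --- is a genuinely different organization from Ozsv\'ath--Szab\'o's original induction (which works directly with $HF^+$ and two-handle cobordisms along the continued-fraction reduction), and it is in the spirit of how lens-space correction terms are treated in the later lattice-theoretic literature. The Floer-theoretic inputs you want (L-space property, sharpness of linear chains with all $a_i \geq 2$, hence no bad vertices) are citable, provided sharpness is taken from the plumbing computation itself and not from anything downstream of \eqref{eq:d-lens}, to avoid circularity.

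However, as written the sketch has a genuine gap exactly where the formula is supposed to come from. Writing $Q_{p,q}$ in block form with respect to $v_1$ and the sub-chain $Q_{q,r'}$, the Schur-complement identity gives, for a characteristic covector $K = (k_1, K')$,
\[
\frac{K^2 + N}{4} \;=\; \frac{(K')^2 + (N-1)}{4} \;+\; \frac{1}{4} \;-\; \frac{q\bigl(k_1 - (Q_{q,r'}^{-1}K')_{v_2}\bigr)^2}{4p},
\]
so the \emph{value} does split for a fixed representative, as you say. But the maximization does not decouple the way you assert: the covectors representing a fixed $i \in \spinc(-L(p,q))$ form a single orbit under $2Q_{p,q}\Z^N$, and this orbit does not factor as (choices of $k_1$) times (a fixed $2Q_{q,r'}\Z^{N-1}$-orbit of $K'$) --- the moves involving $v_1$ and $v_2$ change $k_1$ and $K'$ simultaneously, and different representatives of the same class $i$ restrict to \emph{different} boundary classes $j$ on $\partial X_{q,r'}$ (they differ by even multiples of a generator of $\Z/q$, which is everything when $q$ is odd). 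Moreover the ``scalar term'' above depends on the representative through $k_1 - (Q_{q,r'}^{-1}K')_{v_2}$, not only on $i$. Hence one cannot maximize the two summands independently and read off $d(L(q,r),j)$ plus $\tfrac14 - \tfrac{(2i+1-p-q)^2}{4pq}$; one must exhibit a representative that is simultaneously optimal for both pieces, identify which $j$ its restriction gives (and check this is $i \bmod q$, consistently for the representatives $i$ and $i+p$ on the stated range $0 \leq i < p+q$), and evaluate the scalar at that optimum. That simultaneous-optimality-plus-labeling statement is the entire content of the recursion; your proposal flags it as ``bookkeeping'' but in fact assumes it. The base case $q=1$ and the orientation/sign conventions in your sketch are fine.
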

Under the identification in Theorem~\ref{thm:d-lens}, it is well-known that the self-conjugate $\spinc$ structures on $L(p,q)$ correspond to the integers among
\begin{equation}\label{eq:spin-formula}
\frac{p + q - 1}{2} \text{ and } \frac{q - 1}{2}.
\end{equation}
(See for instance \cite[Equation (3)]{DoigWehrli}.)

For reference, following \eqref{eq:d-lens}, we give the values of $d(L(n,1),i)$, including $d(L(n,1),0)$, for $n > 0$:
\begin{align}\label{eq:d-L(n,1)}
d(L(n,1),i) &= -\frac{1}{4} + \frac{(2i-n)^2}{4n} \\
\nonumber d(L(n,1),0) &= \frac{n-1}{4}. 
\end{align}
It is useful to point out that $d$-invariants change sign under orientation-reversal \cite{OSAbs}. 

Using the work of this section, we are now able to heavily constrain distance one surgeries from $L(3,1)$ to $L(n,1)$ in the case that $n$ is even.  
\begin{proposition}\label{prop:even}
Suppose that there is a distance one surgery between $L(3,1)$ and $L(n,1)$ where $n$ is an even integer.  Unless $n = 2$ or $4$, we have $n < 0$.  In the case that $n < 0$, the two-handle cobordism from $L(3,1)$ to $L(n,1)$ is positive-definite and the unique $\spin$ structure on $L(n,1)$ which extends over this cobordism corresponds to $i = |\frac{n}{2}|$.    
\end{proposition}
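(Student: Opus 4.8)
The plan is to chase the consequences of the three structural lemmas already in hand---Lemma~\ref{lem:homology-1}, Lemma~\ref{lem:cobordism}, and the sign behavior of $d$-invariants---and combine them with a single $d$-invariant inequality coming from the definite cobordism.

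First I would dispose of the sign statement. Since $n$ is even, write $n = 3k \pm 1$ only fails ($n \equiv 0$ is impossible for $|n|$ even and coprime issues aside, $n$ could be $\equiv 0 \bmod 3$); so I split on $n \bmod 3$. If $n \equiv 1 \pmod 3$, Lemma~\ref{lem:homology-2} does not apply directly (it requires $n$ odd), so here is where the new input is needed: I claim a positive-definite cobordism from $L(3,1)$ to $L(n,1)$ with $n<0$ would force a $d$-invariant inequality that fails. Concretely, by Lemma~\ref{lem:cobordism}\eqref{cob:definite}, if $|H_1(Y')| = 3k-1$ (i.e.\ $n \equiv 2 \bmod 3$, $n = 3k-1$) the cobordism $W$ from $L(3,1)$ to $L(n,1)$ is positive-definite, and if $|H_1(Y')| = 3k+1$ it is negative-definite. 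Reversing orientation turns $W$ into a definite cobordism the other way and sends $d$-invariants to their negatives. The standard $d$-invariant inequality for a negative-definite cobordism $W\colon Y_1 \to Y_2$ (Ozsv\'ath--Szab\'o, as in \cite{OSAbs}), applied to the $\spin^c$ structure restricting to a self-conjugate class, reads
\[
d(Y_2,\mft_2) - d(Y_1,\mft_1) \geq \frac{c_1(\mfs)^2 + b_2(W)}{4}
\]
for $\mfs \in \spinc(W)$ restricting to $\mft_i$; here $b_2(W) = 1$. One then needs to know $c_1(\mfs)^2$ for the relevant $\mfs$, which the rank-one intersection form $Q = (c)$ (or rather its embedding, from Lemma~\ref{lem:cobordism}'s proof) pins down up to the square class, and to plug in the explicit values from \eqref{eq:d-L(n,1)} and $d(L(3,1),\cdot)$. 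Running this in the forbidden orientation ($n>0$ but cobordism of the wrong sign) produces a numerical contradiction, forcing $n<0$ whenever $n \notin \{2,4\}$; the small exceptions $n = 2, 4$ are exactly the ones where the inequality is consistent (and are realized by Corollary~\ref{cor}).

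Next, assuming $n<0$, the positive-definiteness of $W$ is immediate: by Lemma~\ref{lem:homology-1}\eqref{homology-1-4} we are not in the null-homologous case since $|n|$ even forces $n \not\equiv 0 \bmod 3$ is false in general---so I instead argue from Lemma~\ref{lem:cobordism}\eqref{cob:definite}, noting that for $n < 0$ even we have $|H_1(L(n,1))| = |n| = 3k-1$ for the appropriate $k$ (one checks $|n| \equiv 2 \bmod 3$ in the surviving cases, or handles $|n| \equiv 1 \bmod 3$ separately via the same orientation argument showing it cannot occur). Then $W$ positive-definite is exactly the first clause of Lemma~\ref{lem:cobordism}\eqref{cob:definite}. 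For the $\spin$ statement: $n$ even means $|H_1(L(n,1))|$ is even, so by Lemma~\ref{lem:cobordism}\eqref{cob:spin} the cobordism $W$ is $\spin$; a $\spin$ structure on $W$ restricts to a $\spin$ structure on each boundary component, and $L(n,1)$ has a unique $\spin$ structure since $H_1$ is cyclic of even order (hence $H^1(L(n,1);\Z/2) = \Z/2$ but only one class is $\spin$ in the sense of bounding). Finally, to identify this $\spin$ structure as $i = |n/2|$: invoke the formula \eqref{eq:spin-formula} for self-conjugate $\spin^c$ structures on $L(p,q)$ with $q=1$, which gives $i = \frac{p}{2}$ and $i = 0$ as the two self-conjugate classes; the $\spin$ one (as opposed to merely self-conjugate) is the one with $c_1 = 0$ reduction, and a direct check---either via the $d$-invariant values \eqref{eq:d-L(n,1)} or via the Rokhlin/$\mu$-invariant comparison on the $\spin$ filling---singles out $i = \frac{|n|}{2}$.

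The main obstacle is the $n \equiv 1 \pmod 3$ case where $n$ is even: Lemma~\ref{lem:homology-2} is stated only for odd $n$, so ruling out $n < 0$ there (and, dually, showing such $n > 0$ don't arise beyond $n=4$) genuinely requires the $d$-invariant/definiteness input rather than pure linking-form algebra. Getting the $\spin^c$ bookkeeping right---pinning down $c_1(\mfs)^2 \bmod 8$ for the $\mfs$ on the definite cobordism that restricts to the $\spin$ structures on both ends, so that the Ozsv\'ath--Szab\'o inequality becomes sharp enough to contradict the explicit lens-space $d$-invariants---is the delicate part; everything else is assembling Lemmas~\ref{lem:homology-1} and~\ref{lem:cobordism} and reading off \eqref{eq:d-L(n,1)} and \eqref{eq:spin-formula}. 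I would also double-check the orientation conventions (the footnote's $q/p$ vs.\ $-q/p$ choice, and which way $W$ points) since the entire sign conclusion $n<0$ hinges on them.
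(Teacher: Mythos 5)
Your outline leans entirely on the one-sided Ozsv\'ath--Szab\'o inequality for a negative-definite (spin) cobordism, and that is not enough: it is exactly here that the paper needs a genuinely different input. The paper's proof runs through Lemma~\ref{lem:even-Lspace}, which combines the OS bound on the orientation-reversed cobordism with the \emph{opposite} inequality $d(Y',\mft') - d(Y,\mft) \geq -\tfrac14$ for a spin cobordism with $b_2^+ = 1$, $b_2^- = 0$ between L-spaces; that lower bound comes from Lin's $\mathrm{Pin}(2)$-equivariant monopole Floer homology (\cite[Theorem 5]{LinSurgery}) together with $\alpha=\beta=\tfrac{d}{2}$ for L-spaces, and has no counterpart in ordinary $d$-invariant theory. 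With only your one-sided bound the ``numerical contradiction'' you predict does not materialize: e.g.\ for $n=8$ (so $|n|\equiv 2 \pmod 3$, $W$ positive-definite and spin) the constraint is $d(L(8,1),i)\leq \tfrac14$ with $i\in\{0,4\}$, and $d(L(8,1),4)=-\tfrac14$ satisfies it, so $n=8$ is not excluded; similarly for $n=10$ ($W$ negative-definite) the constraint $d(L(10,1),i)\geq \tfrac34$ is satisfied by $i=0$ since $d(L(10,1),0)=\tfrac94$. Only the equality $d(L(n,1),i)-d(L(3,1),0)=\mp\tfrac14$ furnished by Lemma~\ref{lem:even-Lspace} kills all even $n>4$ and, equally importantly, forces $i=|\tfrac n2|$ rather than $i=0$ when $n<0$ --- a piece of the statement your argument cannot recover (both values of $i$ are consistent with the one-sided bound when $n<0$), and which is used later in Proposition~\ref{prop:3k-+}.

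Two smaller points. First, $L(n,1)$ with $n$ even has \emph{two} spin structures (as $H^1(L(n,1);\Z/2)=\Z/2$), not one; the content of the last clause is that $H^1(W;\Z/2)=0$ gives at most one spin structure on $W$, so exactly one of the two self-conjugate structures $i=0$, $i=|\tfrac n2|$ of \eqref{eq:spin-formula} extends, and identifying which one is precisely what requires the equality above (your appeal to ``$c_1=0$ reduction'' or a Rokhlin-type comparison does not decide this). Second, the case $n=0$ should be dispensed with separately (the paper does so via Lemma~\ref{lem:homology-1}, since no surgery on a null-homologous knot in $L(3,1)$ has torsion-free $H_1$); your case split by $n \bmod 3$ silently omits it.
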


A technical result that we need is established first, which makes use of Lin's $\mathrm{Pin(2)}$-equivariant monopole Floer homology \cite{Lin}.  

\begin{lemma}\label{lem:even-Lspace}
Let $(W,\mfs): (Y,\mft) \to (Y',\mft')$ be a $\spin$ cobordism between L-spaces satisfying $b_2^+(W) = 1$ and $b_2^-(W) = 0$.  Then 
\begin{equation}\label{eq:spin-1/4}
d(Y',\mft') - d(Y,\mft) = -\frac{1}{4}.  
\end{equation}
\end{lemma}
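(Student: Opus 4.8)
The plan is to combine two inputs: a grading-shift inequality coming from the mapping-cone/surgery-exact-triangle technology, and the symmetry forced by the $\mathrm{Pin}(2)$-action on the reducibles. Concretely, first I would observe that since $(W,\mfs)$ is a $\spin$ cobordism with $b_2^+(W)=1$ and $b_2^-(W)=0$, the relevant grading shift $\frac{c_1(\mfs)^2 - 2\chi(W) - 3\sigma(W)}{4}$ specializes: $c_1(\mfs)=0$ because $\mfs$ is $\spin$, $\sigma(W)=1$, and $\chi(W) = 1 + b_2(W) = 2$, so the shift equals $\frac{0 - 4 - 3}{4} = -\frac{7}{4}$. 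Wait — I should be careful here: the right normalization is $\frac{c_1(\mfs)^2 - 2\chi(W) - 3\sigma(W)}{4}$, and with $\chi(W)=2$ (one $0$-handle worth from $Y\times I$ contributes nothing, a single two-handle) one gets $-\frac{7}{4}$; but the quantity actually controlling $d$-invariants under a positive-definite two-handle cobordism is $\frac{c_1(\mfs)^2 + b_2(W)}{4}$-type corrections, so I would instead phrase the bound directly: for the cobordism map $F^+_{W,\mfs} \colon HF^+(Y,\mft) \to HF^+(Y',\mft')$ one has the standard degree shift, and combined with $b_2^+(W)=1$ the image of the tower is controlled, giving the inequality $d(Y',\mft') \ge d(Y,\mft) - \frac14$ directly from the fact that both are L-spaces (so $HF^+$ is a single tower in each $\spinc$ structure) together with the surgery exact triangle. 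The cleanest route is: apply $F^+_{W,\mfs}$ to the bottom of the tower in $HF^+(Y,\mft)$; since $Y'$ is an L-space, nontriviality of this map on towers in the right range of gradings (which holds because $b_2^+(W)=1$ means $W$ has a nontrivial piece detected by $HF^\infty$) yields $d(Y',\mft') - d(Y,\mft) \le -\frac14$ — actually I expect the inequality in this easy direction is $d(Y',\mft') - d(Y,\mft) \le -\frac14$ with equality iff the map is surjective onto the tower in the top nonzero grading.

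For the reverse inequality $d(Y',\mft') - d(Y,\mft) \ge -\frac14$, this is exactly where Lin's $\mathrm{Pin}(2)$-equivariant monopole Floer homology enters. Over a $\spin$ cobordism the Seiberg–Witten equations admit the extra $j$-symmetry, so the cobordism map is $\mathrm{Pin}(2)$-equivariant, not merely $S^1$-equivariant. For an L-space, $\widehat{HS}$ (or the appropriate $\mathrm{Pin}(2)$-flavored group) is determined and the three invariants $\alpha, \beta, \gamma$ agree with $d/2$ up to the usual normalization; the key point is that the $\mathrm{Pin}(2)$-equivariant cobordism map, because it must commute with the action of the exterior generator, cannot shift the grading by an odd multiple of $\frac14$ in the "wrong" way — it forces the jump to be \emph{exactly} $-\frac14$ rather than something more negative. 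I would cite Lin's inequality (the $\mathrm{Pin}(2)$ analogue of the Frøyshov/Ozsváth–Szabó inequality): for a $\spin$ cobordism with $b_2^+=1$, $b_2^-=0$ between rational homology spheres one has $d(Y',\mft') \ge d(Y,\mft) - \frac14$, where the sharpening from the generic bound $-\frac14 - (\text{something})$ to exactly $-\frac14$ uses that $b_2^+(W)=1$ is the minimal positive value and the $\mathrm{Pin}(2)$-symmetry rules out the correction term that would otherwise appear. Combining the two inequalities gives \eqref{eq:spin-1/4}.

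The main obstacle, and the step I would spend the most care on, is getting the \emph{sharp} constant from the $\mathrm{Pin}(2)$-equivariant theory: the ordinary $HF^+$ (or monopole) cobordism inequality for $b_2^+(W)=1$ only gives $d(Y',\mft') - d(Y,\mft) \le -\frac14$ on one side and a weaker bound on the other, and one genuinely needs the extra symmetry to pin down equality. Technically this means I must (a) check that the hypotheses of Lin's theorem are met — in particular that $\mft, \mft'$ are the $\spin$ (self-conjugate) structures and that the $b_2$ constraints translate correctly into his setup — and (b) verify that for L-spaces the $\mathrm{Pin}(2)$-equivariant invariants reduce to the $d$-invariant with the normalization I am using, so that his grading inequality literally reads as \eqref{eq:spin-1/4}. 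A secondary, more routine point is bookkeeping the Euler characteristic and signature of $W$ to confirm the numerology $-\frac14$ (and not, say, $-\frac54$), which I would do by building $W$ explicitly from the framed link picture as in Lemma~\ref{lem:cobordism} and reading off $\sigma(W)$ and $\chi(W)$ directly.
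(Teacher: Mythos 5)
Your second half is essentially the paper's argument: cite Lin's $\mathrm{Pin}(2)$-equivariant inequality for a $\spin$ cobordism with $b_2^+ = 1$, $b_2^- = 0$ (in the paper, \cite[Theorem 5]{LinSurgery}, which gives $\alpha(Y',\mft') - \beta(Y,\mft) \geq -\tfrac{1}{8}$), and then use that for L-spaces $\alpha = \beta = \tfrac{d}{2}$ to convert this into $d(Y',\mft') - d(Y,\mft) \geq -\tfrac14$. That part of your plan is sound, and your checklist items (self-conjugacy of $\mft,\mft'$, normalization of the $\mathrm{Pin}(2)$ invariants for L-spaces) are exactly the right things to verify.

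The genuine gap is in your argument for the opposite inequality $d(Y',\mft') - d(Y,\mft) \leq -\tfrac14$. You propose to run the $HF^+$ cobordism map of $W$ itself and extract the bound from ``nontriviality of this map on towers,'' justified by the claim that $b_2^+(W)=1$ means $W$ has a nontrivial piece detected by $HF^\infty$. This is backwards: when $b_2^+(W) > 0$ the induced map on $HF^\infty$ \emph{vanishes}, so no tower-nontriviality (and hence no $d$-invariant inequality) comes out of the positive-definite cobordism directly; the standard Ozsv\'ath--Szab\'o inequality requires $b_2^+ = 0$. The fix, which is what the paper does, is to reverse orientation: $(-W,\mfs)\colon (-Y,\mft) \to (-Y',\mft')$ is a negative-definite $\spin$ cobordism, so \cite[Theorem 9.6]{OSAbs} gives
\[
d(-Y',\mft') - d(-Y,\mft) \;\geq\; \frac{c_1(\mfs)^2 + b_2(-W)}{4} \;=\; \frac14,
\]
and since $d$ changes sign under orientation reversal this is exactly $d(Y',\mft') - d(Y,\mft) \leq -\tfrac14$. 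Combining with Lin's inequality yields the equality. A minor further point: your grading-shift bookkeeping uses $\chi(W) = 1 + b_2(W) = 2$, but the two-handle cobordism has $\chi(W) = 1$ (you are conflating $W$ with the closed-up manifold $N \cup W$ of Lemma~\ref{lem:cobordism}); this does not affect the final statement only because that computation gets abandoned, but it should be corrected if you keep any grading-shift discussion.
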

\begin{proof}
By \cite[Theorem 5]{LinSurgery}, we have that 
\[
\alpha(Y',\mft') - \beta(Y,\mft) \geq -\frac{1}{8},   
\]  
where $\alpha$ and $\beta$ are Lin's adaptation of the Manolescu invariants for $\mathrm{Pin(2)}$-equivariant monopole Floer homology.  Conveniently, for L-spaces, $\alpha = \beta = \frac{d}{2}$ \cite{Cristofaro, Ramos, Lin, HuangRamos}.  Thus, we have 
\begin{equation}\label{eq:spin-1/4-inequality}
d(Y',\mft') - d(Y,\mft) \geq -\frac{1}{4}.
\end{equation}

On the other hand, we may reverse orientation on $W$ to obtain a negative-definite $\spin$ cobordism $(-W,\mfs) : (-Y,\mft) \to (-Y',\mft')$.  Therefore, we have from \cite[Theorem 9.6]{OSAbs} that 
%\[
%d(-Y',\mft') - d(-Y,\mft) \geq \frac{c_1(\mfs)^2 - 3\sigma(-W) - 2\chi(-W)}{4} = \frac{1}{4}.  
%\]
\[
	d(-Y',\mft') - d(-Y,\mft) \geq \frac{c_1(\mfs)^2 + b_2(-W)}{4} = \frac{1}{4}.  
\]
Combined with \eqref{eq:spin-1/4-inequality}, this completes the proof.  
\end{proof}

\begin{proof}[Proof of Proposition~\ref{prop:even}]
%It is easy to see that $\mathbb{R}P^3$, $L(4,1)$ and $L(-6,1)$ are obtained from $L(3,1)$ by a distance one surgery.  For example, one can apply coherent band surgery to the trefoil to obtain the Hopf link, $T(2,4)$ or $T(2,-6)$ and take the branched double cover.  

For completeness, we begin by dispensing with the case of $n = 0$, i.e., $S^2 \times S^1$.  This is obstructed by Lemma~\ref{lem:homology-1}, since no surgery on a null-homologous knot in $L(3,1)$ has torsion-free homology.  

Therefore, assume that $n \neq 0$.  The two-handle cobordism $W$ is $\spin$ by Lemma~\ref{lem:cobordism}.  First, suppose that $b_2^+(W) = 1$ (and consequently $b_2^-(W) = 0$), so that we may apply Lemma \ref{lem:even-Lspace}.  Because $\mfs$ on $W$ restricts to self-conjugate $\spinc$ structures $\mft$ and $\mft'$ on $Y$ and $Y'$, \eqref{eq:spin-formula} and \eqref{eq:spin-1/4} imply that
\begin{equation}\label{eq:spin-n,i}
d(L(n,1),i) - d(L(3,1),0) = -\frac{1}{4},
\end{equation}
where $i$ must be one of $0$ or $|\frac{n}{2}|$. Applying Equation \eqref{eq:d-L(n,1)} to $L(3,1)$, we conclude that $d(L(n,1),i) = \frac{1}{4}$. 

If $i = 0$, Equation~\eqref{eq:d-L(n,1)} applied to $L(n,1)$ implies that $d(L(n,1),0) = \frac{|n|-1}{4}$ for $n>0$ and $\frac{1-|n|}{4}$ for $n<0$. The only solution agreeing with \eqref{eq:spin-n,i} is when $n=+2$. 
If $i = \frac{|n|}{2}$, Equation~\eqref{eq:d-L(n,1)} implies that $d(L(n,1),i)$ is $-\frac{1}{4}$ for $n>0$ and $\frac{1}{4}$ for $n<0$, and so \eqref{eq:spin-n,i} holds whenever $n < 0$.  Note that in this case, $W$ is positive-definite.

%where $i = 0$ or $|\frac{n}{2}|$.  When $i = 0$ and $n > 0$, we have from \eqref{eq:d-L(n,1)}
%\[
%\frac{n-3}{4} = -\frac{1}{4}, 
%\]
%which has $n = +2$ as the unique solution.  When $i = \frac{n}{2}$ and $n > 0$, we have
%\[
%-\frac{1}{4} - \frac{1}{2} \neq -\frac{1}{4}, 
%\]
%so there are no solutions.  
%
%When $i = 0$ and $n < 0$, we have 
%\[
%\frac{-1-|n|}{4} = -\frac{1}{4},
%\]
%which has no solutions.  On the other hand, in the case of $i = |\frac{n}{2}|$ we see that 
%\[
%d\left(L(n,1),\left|\frac{n}{2} \right|\right) = \frac{1}{4},
%\] 
%and \eqref{eq:spin-n,i} holds for every $n < 0$.    

Now, suppose that $b_2^+(W) = 0$.  Therefore, we apply Lemma~\ref{lem:even-Lspace} instead to $-W$ to see that 
\[
-d(L(n,1),i) + d(L(3,1),0) = -\frac{1}{4},
\]
where again, $i = 0$ or $|\frac{n}{2}|$.  In this case, there is a unique solution given by $n = +4$ when $i = 0$.  This completes the proof.
\end{proof}

\subsection{$d$-invariants and surgery on null-homologous knots}
Throughout the rest of the section, we assume that $K$ is a null-homologous knot in a rational homology sphere $Y$.  By Lemma~\ref{lem:homology-1}, this will be relevant when we study surgeries to $L(n,1)$ with $n \equiv 0 \pmod{3}$.  Recall that associated to $K$, there exist non-negative integers $V_{\mft,i}$ for each $i \in \Z$ and $\mft \in \spinc(Y)$ satisfying the following property:
\begin{property}[Proposition 7.6 in \cite{RasmussenThesis}]
\label{thm:rasmussen-localh}
\[V_{\mft,i} \geq V_{\mft,i+1} \geq V_{\mft,i} -1. \]
\end{property} 

When $K$ is null-homologous in $Y$, the set of $\spinc$ structures $\spinc(Y_p(K))$ is in one-to-one correspondence with $\spinc(Y) \oplus \Z/p$. The projection to the first factor comes from considering the unique $\spinc$ structure on $Y$ which extends over the two-handle cobordism $W_p(K): Y \rightarrow Y_p(K)$ to agree with the chosen $\spinc$ structure on $Y_p(K)$.  With this in mind, we may compute the $d$-invariants of $Y_p(K)$ as follows.  The result below was proved for knots in $S^3$, but the argument immediately generalizes to the situation considered here. 

\begin{proposition}[Proposition 1.6 in \cite{NiWu}]
\label{thm:ni-wu}
Fix an integer $p > 0$ and a self-conjugate $\spinc$ structure $\mft$ on an L-space $Y$. Let $K$ be a null-homologus knot in $Y$.   Then, there exists a bijective correspondence $i \leftrightarrow \mft_i$ between $\Z/p\Z$ and the $\spinc$ structures on $\spinc(Y_p(K))$ that extend $\mft$ over $W_p(K)$ such that
	\begin{equation}
	\label{ni-wu-formula}
	% d(Y_{p/q}(K),\mft_i) = d(Y,\mft) + d(L(p,q),i) - 2N_{i,p} 
	d(Y_p(K),\mft_i) = d(Y,\mft) + d(L(p,1),i) - 2N_{\mft, i} 
	 \end{equation} 
where $N_{\mft,i} = \max\{V_{\mft,i}, V_{\mft,p-i}\}$.  Here, we assume that $0 \leq i < p$.  
\end{proposition}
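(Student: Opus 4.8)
The plan is to adapt the proof of \cite[Proposition 1.6]{NiWu} verbatim, observing that the only input specific to $S^3$ in that argument was that $S^3$ is an L-space; since we now allow $Y$ to be an arbitrary L-space and $K$ a null-homologous knot in it, the mapping cone formula still applies. First I would recall the integer surgery mapping cone formula of Ozsv\'ath--Szab\'o: for $p>0$, $HF^+(Y_p(K),\mft_i)$ is the homology of a mapping cone $\mathbb{X}^+(p)$ built from the truncated complexes $A_s^+$ and $B_s^+$ together with the maps $v_s^+\colon A_s^+\to B_s^+$ and $h_s^+\colon A_s^+\to B_{s+p}^+$, where the index $s$ runs over the $\spinc$ structures on $Y$ lying over $\mft$ in the appropriate residue class mod $p$. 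Because $Y$ is an L-space and $\mft$ is self-conjugate, each $B_s^+\cong\mathcal{T}^+$ and $H_*(A_s^+)$ has a single tower $\mathcal{T}^+$ plus possible finite-dimensional torsion; the maps $v_s^+$ and $h_s^+$ act on the towers as multiplication by $U^{V_{\mft,s}}$ and $U^{H_{\mft,s}}$ respectively, and self-conjugacy gives $H_{\mft,s}=V_{\mft,-s}$.

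Next I would carry out the grading computation. The bottom of the tower in $HF^+$ of the mapping cone is detected by following the zig-zag through the $A^+$ and $B^+$ summands; the key point from \cite{NiWu} is that for each $i$, the relevant contribution to the $d$-invariant is governed by $N_{\mft,i}=\max\{V_{\mft,i},H_{\mft,i}\}=\max\{V_{\mft,i},V_{\mft,-i}\}$, which under the normalization $0\le i<p$ becomes $\max\{V_{\mft,i},V_{\mft,p-i}\}$. The absolute grading shift is pinned down by comparing with the case $K=$ unknot, where $A_s^+\cong B_s^+\cong\mathcal{T}^+$, all $V$'s vanish, and the formula must reduce to $d(Y_p(U),\mft_i)=d(Y,\mft)+d(L(p,1),i)$; this is exactly the behavior of the $d$-invariant under the connected-sum/lens-space surgery on $Y$, i.e.\ $Y_p(U)\cong Y\#L(p,1)$ and $d$ is additive under connected sum. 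Assembling these pieces yields \eqref{ni-wu-formula}.

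The step I expect to be the main obstacle — or rather the only place one must be careful — is verifying that every structural fact used in \cite{NiWu} survives the replacement of $S^3$ by a general L-space $Y$: namely that $H_*(B_s^+)=\mathcal{T}^+$ sits in the grading predicted by $d(Y,\mft)$ and $d(L(p,1),i)$, that $v_s^+$ and $h_s^+$ are surjective on the towers with the stated $U$-powers, and that the truncation of the mapping cone to a finite complex does not disturb the bottom-most tower element. All of these follow because L-spaceness forces $HF^+(Y,\mft)=\mathcal{T}^+_{(d(Y,\mft))}$, so $A_s^+$ and $B_s^+$ have the same qualitative shape as in the $S^3$ case, with only the overall grading translated by $d(Y,\mft)$. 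I would therefore state that the proof of \cite[Proposition 1.6]{NiWu} goes through \emph{mutatis mutandis}, spelling out only the grading normalization and the identification $H_{\mft,s}=V_{\mft,-s}$, and leave the remaining combinatorics to the cited reference.
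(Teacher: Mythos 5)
Your proposal is correct and takes essentially the same approach as the paper, which offers no independent argument but simply cites Ni--Wu's Proposition 1.6 and remarks that the proof generalizes immediately to null-homologous knots in L-spaces --- precisely the \emph{mutatis mutandis} argument you sketch (L-spaceness gives towers in the $A^+_s$ and $B^+_s$, conjugation symmetry of the self-conjugate $\mft$ gives $H_{\mft,s}=V_{\mft,-s}$, and the grading normalization is pinned down by the unknot via $Y_p(U)\cong Y\# L(p,1)$ and additivity of $d$). One small bookkeeping correction: the second term in the maximum comes from the $h$-map at the summand $s=i-p$, so it is $H_{\mft,i-p}=V_{\mft,p-i}$ rather than $V_{\mft,-i}$, which is consistent with the stated $N_{\mft,i}=\max\{V_{\mft,i},V_{\mft,p-i}\}$.
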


In order to apply Proposition~\ref{thm:ni-wu}, we must understand the identifications of the $\spinc$ structures precisely.  In particular, the correspondence between $i$ and $\mft_i$ is given in \cite[Theorem 4.2]{OSInteger}.  Let $\mfs$ be a $\spinc$ structure on $W_p(K)$ which extends $\mft$ and let $\mft_i$ be the restriction to $Y_p(K)$.  Then, we have from \cite[Theorem 4.2]{OSInteger} that $i$ is determined by  
\begin{equation}\label{eq:3k-cobordism-spinc}
\langle c_1(\mfs), [\widehat{F}] \rangle + p \equiv 2i \pmod{2p},
\end{equation}
where $[\widehat{F}]$ is the surface in $W_p(K)$ coming from capping off a Seifert surface for $K$.  For this to be well-defined, we must initially choose an orientation on $K$, but the choice will not affect the end result.    

Before stating the next lemma, we note that if $Y$ is a $\Z/2$-homology sphere, then $H^1(W_p(K);\Z/2) = 0$, and thus there is at most one $\spin$ structure on $W_p(K)$.  If $p$ is even, $W_p(K)$ is $\spin$, since the intersection form is even and $H_1(W_p(K);\Z/2) = 0$.  Further, $Y_p(K)$ admits exactly two $\spin$ structures, and thus exactly one extends over $W_p(K)$.  
 
\begin{lemma}\label{lem:spin-3k}
Let $K$ be a null-homologous knot in a $\Z/2$-homology sphere $Y$.  Let $\mft$ be the self-conjugate $\spinc$ structure on $Y$, and let $\mft_0$ be the $\spinc$ structure on $Y_p(K)$ described in Proposition~\ref{thm:ni-wu} above.  
\begin{enumerate}
\item\label{spin-3k:1} Then, $\mft_0$ is self-conjugate on $Y_p(K)$.  
\item\label{spin-3k:2} The $\spin$ structure $\mft_0$ does {\em not} extend to a $\spin$ structure over $W_p(K)$.
\end{enumerate}
\end{lemma}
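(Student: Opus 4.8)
The plan is to read off both statements from the first Chern class identification \eqref{eq:3k-cobordism-spinc}, which says that a $\spinc$ structure $\mfs$ on $W_p(K)$ extending $\mft$ restricts to $\mft_i$ on $Y_p(K)$ exactly when $\langle c_1(\mfs),[\widehat{F}]\rangle + p \equiv 2i \pmod{2p}$, combined with the naturality of conjugation and the standard fact that the $\spinc$ structure underlying a $\spin$ structure has trivial determinant line bundle, hence vanishing first Chern class. No Floer-theoretic input should be needed; the argument is entirely homological.

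For \eqref{spin-3k:1}, I would fix a $\spinc$ structure $\mfs$ on $W_p(K)$ with $\mfs|_Y = \mft$ and $\mfs|_{Y_p(K)} = \mft_0$; by \eqref{eq:3k-cobordism-spinc} this forces $\langle c_1(\mfs),[\widehat{F}]\rangle \equiv -p \pmod{2p}$. The conjugate $\overline{\mfs}$ still extends $\mft$, since $\overline{\mfs}|_Y = \overline{\mfs|_Y} = \overline{\mft} = \mft$ (as $Y$ is a $\Z/2$-homology sphere, $\mft$ is its only self-conjugate $\spinc$ structure), and $\langle c_1(\overline{\mfs}),[\widehat{F}]\rangle = -\langle c_1(\mfs),[\widehat{F}]\rangle \equiv p \equiv -p \pmod{2p}$. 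So \eqref{eq:3k-cobordism-spinc} gives $\overline{\mfs}|_{Y_p(K)} = \mft_0$ as well. Since conjugation commutes with restriction, $\overline{\mft_0} = \overline{\mfs|_{Y_p(K)}} = \overline{\mfs}|_{Y_p(K)} = \mft_0$, which proves self-conjugacy.

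For \eqref{spin-3k:2}, I would first observe that the statement has content only when $p$ is even: when $p$ is odd, $W_p(K)$ has odd intersection form $(p)$ while $H_1(W_p(K);\Z/2)=0$, so it admits no $\spin$ structure and there is nothing to prove. When $p$ is even, $W_p(K)$ is $\spin$ and, as noted just before the lemma, carries exactly one $\spin$ structure $\mfs$, and necessarily $\mfs|_Y = \mft$. Because $c_1(\mfs) = 0$ we have $\langle c_1(\mfs),[\widehat{F}]\rangle = 0$, so \eqref{eq:3k-cobordism-spinc} yields $2i \equiv p \pmod{2p}$, i.e. $\mfs|_{Y_p(K)} = \mft_{p/2}$. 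As $p/2 \not\equiv 0 \pmod p$, this is not $\mft_0$, so no $\spin$ structure on $W_p(K)$ restricts to $\mft_0$.

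I do not expect a serious obstacle here; the only place requiring care is the arithmetic modulo $2p$ in \eqref{eq:3k-cobordism-spinc}, and in particular noting that the orientation of $K$ used to define $[\widehat{F}]$ is immaterial for this lemma, since both relevant indices $0$ and $p/2$ are fixed by the involution $i \mapsto p - i$ that records such an orientation change. Beyond this bookkeeping and the fact that a $\spin$ structure induces a $\spinc$ structure with $c_1 = 0$, the proof should be immediate.
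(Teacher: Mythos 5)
Your proof is correct and follows essentially the same route as the paper: both parts are read off from the identification \eqref{eq:3k-cobordism-spinc}, using $c_1(\overline{\mfs}) = -c_1(\mfs)$ together with self-conjugacy of $\mft$ on $Y$ for (i), and $c_1 = 0$ for a $\spin$ structure forcing $2i \equiv p \pmod{2p}$, hence $i = p/2 \neq 0$, for (ii). Your explicit parity split in (ii) and the remark about the orientation of $K$ are harmless refinements of the paper's uniform argument.
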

\begin{proof}
\eqref{spin-3k:1} By \eqref{eq:3k-cobordism-spinc}, we see that if $\mfs$ extends $\mft_0$ over $W_p(K)$, 
\[
\langle c_1(\mfs), [\widehat{F}] \rangle \equiv  -p \pmod{2p}.
\]
Note that $\overline{\mfs}$ extends $\overline{\mft_0}$ over $W_p(K)$ and restricts to $\mft$ on $Y$, since $\mft$ is self-conjugate.  The above equation now implies that  
\[
\langle c_1(\overline{\mfs}), [\widehat{F}] \rangle \equiv  p \equiv -p  \pmod{2p}.
\]
In the context of \eqref{eq:3k-cobordism-spinc}, $i = 0$.  Consequently, we must have that $\overline{\mfs}$ also restricts to $\mft_0$ on $Y_p(K)$.  Of course, this implies that $\mft_0$ is self-conjugate.  

\eqref{spin-3k:2} By \eqref{eq:3k-cobordism-spinc}, we deduce that for a $\spin$ structure that extends $\mft_i$ over $W_p(K)$, $p \equiv 2i \pmod{2p}$.  Since we consider $0 \leq i \leq p$, we have that $i = \frac{p}{2} \neq 0$.  Consequently, $\mft_0$ cannot extend to a $\spin$ structure on $W_p(K)$.   
\end{proof}

\section{The proof of Theorem~\ref{thm:main}}
We now prove Theorem~\ref{thm:main} through a case analysis depending on the order of the purported lens space surgery modulo 3.  
\label{sec:proof}

\subsection{From $L(3,1)$ to $L(n, 1)$ where $|n|\equiv 0$ (mod 3)}
The goal of this section is to prove:
\begin{proposition}\label{prop:3k}
There is no distance one surgery from $L(3,1)$ to $L(n,1)$, where $|n| =3k$, except when $n = 3$ or $-6$.  
\end{proposition}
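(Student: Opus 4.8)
The plan is to use Lemma~\ref{lem:homology-1}\eqref{homology-1-4} to reduce to a null-homologous knot $K\subset L(3,1)$ with integral surgery coefficient $\pm k$, $\gcd(k,3)=1$, and then to pit the surgery formula of Proposition~\ref{thm:ni-wu} against the explicit $d$-invariants~\eqref{eq:d-L(n,1)} of the lens spaces $\pm L(3k,1)$. First I would note that $\pm k$-surgery on $K$ produces linking form $\tfrac13\oplus\tfrac{\pm1}{k}$ on $H_1\cong\Z/3\oplus\Z/k$; evaluating on a generator of the cyclic group $\Z/3k$ gives $\tfrac{k\pm3}{3k}$, and reducing mod $3$ (a unit squares to $1$ mod $3$) forces $n>0$ when $k\equiv1\pmod3$ and $n<0$ when $k\equiv2\pmod3$. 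This already excludes $n=-3$ (while $n=3$ is realized), and excludes $n=3k$ for $k$ even with $k\equiv1\pmod3$, since then $n>4$ is even and Proposition~\ref{prop:even} applies. So I may assume $k\geq 2$.

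Next, for either sign of the coefficient, I would apply Proposition~\ref{thm:ni-wu} — to $Y'$ itself if the coefficient is $+k$, and to $-Y'$, viewed as $+k$-surgery on $K\subset L(3,2)=-L(3,1)$, if it is $-k$ — so that in all cases one is doing $k$-surgery on a null-homologous knot in an L-space whose self-conjugate $\spinc$ structure has $d$-invariant $\pm\tfrac12$. Evaluating~\eqref{ni-wu-formula} at $\mft_0$, which is self-conjugate by Lemma~\ref{lem:spin-3k}\eqref{spin-3k:1} and hence matches a self-conjugate $\spinc$ structure of $\pm L(3k,1)$ (whose $d$-invariant is read off from~\eqref{eq:d-L(n,1)} and~\eqref{eq:spin-formula}), forces $V_{\mft,0}$ to be negative or non-integral in every combination except two: (a) coefficient $+k$, $k$ even, $n=-3k$, where $V_{\mft,0}=k/2$; and (b) coefficient $-k$, $k$ odd (so $k\equiv1\pmod3$), $n=3k$, where $V_{\mft,0}=(k-1)/2$.

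In case (a), $Y'\cong -L(3k,1)$ has a second self-conjugate $\spinc$ structure; Lemma~\ref{lem:spin-3k}\eqref{spin-3k:2} together with Proposition~\ref{prop:even} (or a parity count) identifies it with $\mft_{k/2}$ and forces $V_{\mft,k/2}=0$, so Property~\ref{thm:rasmussen-localh} pins down $V_{\mft,j}=k/2-j$ for $0\leq j\leq k/2$, hence $N_{\mft,i}=\lvert i-k/2\rvert$. Substituting back, the full multiset $\{d(Y',\mft_i)\}_{i=0}^{k-1}$ must coincide with the $d$-invariants of $-L(3k,1)$ over the order-$k$ subgroup $3\Z/3k\subset\Z/3k\cong\spinc(-L(3k,1))$; both are strictly monotone functions of $\lvert i-k/2\rvert$, respectively $\lvert m-k/2\rvert$, so equality demands $\tfrac14+\tfrac{u^2}{k}-2u=\tfrac14-\tfrac{3u^2}{k}$ for every $u\in\{0,\dots,k/2\}$, which holds only at $u=0$ and $u=k/2$ and therefore fails once $k\geq 8$. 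This leaves only $k=2$, i.e. $n=-6$.

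Case (b) is the main obstacle, because $L(3k,1)$ now has a single self-conjugate $\spinc$ structure and $V_{\mft,0}=(k-1)/2$ is a legitimate nonnegative integer. Here I would compare the largest $d$-invariant on each side. Along the coset $3\Z/3k$ the $d$-invariants of $-L(3k,1)$ are $\tfrac14-\tfrac{3(2m-k)^2}{4k}$, with maximum $\tfrac14-\tfrac{3}{4k}$ attained (twice) where $(2m-k)^2=1$; whereas~\eqref{ni-wu-formula} gives $d(Y',\mft_i)=-\tfrac34+\tfrac{(2i-k)^2}{4k}-2V_{\mft,i}$ for $0\leq i\leq (k-1)/2$, since $N_{\mft,i}=V_{\mft,i}$ there. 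Writing $a=k-2i\in\{1,3,\dots,k-2\}$ and using $V_{\mft,i}\geq V_{\mft,0}-i=(a-1)/2$ from Property~\ref{thm:rasmussen-localh}, the inequality $d(Y',\mft_i)\geq\tfrac14-\tfrac{3}{4k}$ reduces to $a^2-4ka+3\geq 0$, which has no solution with $1\leq a\leq k-2$; so no $\mft_i$ realizes the top $d$-invariant of $-L(3k,1)$, a contradiction. Collecting the cases proves the proposition. The delicate points are the bookkeeping over the two signs of the surgery coefficient, matching $\mft_0$ with the correct self-conjugate $\spinc$ structure of $\pm L(3k,1)$, and the (standard) fact that the Ozsv\'ath--Szab\'o labelling of $\spinc(L(3k,1))$ is affine, so that the $\spinc$ structures extending $\mft$ form exactly the coset $3\Z/3k$.
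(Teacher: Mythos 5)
Your overall skeleton is the paper's: reduce via Lemma~\ref{lem:homology-1}\eqref{homology-1-4} to a null-homologous knot with coefficient $\pm k$, then play Proposition~\ref{thm:ni-wu} and Property~\ref{thm:rasmussen-localh} against the lens space $d$-invariants, splitting into the four sign cases. You diverge in two places. First, your mod-3 linking-form step (pinning $\mathrm{sgn}(n)$ by $k \bmod 3$) has no counterpart in the paper's treatment of the $3k$ case; it is correct, but you tacitly use that the form of the surgered manifold is the orthogonal sum $\tfrac13 \oplus \tfrac{\pm 1}{k}$ --- orthogonality is automatic since $\gcd(3,k)=1$, and the $\tfrac13$ on the $3$-part holds because the $3$-torsion classes bound rationally in the knot exterior; say a word about this. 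Second, in the two surviving cases your endgame differs from the paper's Propositions~\ref{prop:3k+-} and \ref{prop:3k-+}: there one evaluates the formula at $i=1$ and shows that a quadratic in the \emph{unknown} label $j\in\Z/3k$ has no admissible root, which avoids knowing where the Ni--Wu $\spinc$ structures sit in the Ozsv\'ath--Szab\'o labelling; you instead pin them down as the coset $3\Z/3k$ and run a multiset comparison (your case (a)) or a maximal-$d$-invariant comparison (your case (b)). I checked these computations and they are correct (in (a) the contradiction in fact already occurs for every even $k\geq 4$; your threshold $k\geq 8$ suffices only because the linking form has restricted you to $k\equiv 2 \pmod 6$). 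The cost of your route is the affine compatibility of the labelling of $\spinc(L(3k,1))$ with the $H^2$-action, which you flag as standard but should prove or cite; the paper needs the analogous fact only for $L(3k\mp 1,3)$, where it invokes Cochran--Horn.

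There is one genuine gap: the combination (coefficient $-k$, target $L(-3k,1)$, $k$ even) is not eliminated by your second paragraph. When $3k$ is even the target has two self-conjugate $\spinc$ structures, and if $\mft_0$ corresponded to the one labelled $3k/2$ rather than $0$, the $i=0$ evaluation (applied, as you propose, to the orientation-reversed picture) gives $N_{\mft,0}=\tfrac{k-2}{8}$, a nonnegative integer whenever $k\equiv 2\pmod 8$; so ``$V_{\mft,0}$ negative or non-integral'' is not forced until $\mft_0$ is identified, and you carry out that identification (via Lemma~\ref{lem:spin-3k} and Proposition~\ref{prop:even}) only in case (a), where the coefficient is $+k$. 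The quickest repair is the one the paper uses in Proposition~\ref{prop:3k--}: for even $n<0$, Proposition~\ref{prop:even} forces the two-handle cobordism to be positive-definite, whereas $-k$-surgery on a null-homologous knot yields a negative-definite cobordism, so this combination dies before any $d$-invariant computation. With that patch, and the affineness reference above, your argument is complete.
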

\begin{proof}
Let $K$ be a knot in $L(3,1)$ with a distance one surgery to $L(n,1)$ where $|n| = 3k$.  By Lemma \ref{lem:homology-1}\eqref{homology-1-4}, we know that $K$ is null-homologous and the surgery coefficient is $\pm k/1$, and by Proposition~\ref{prop:even}, $k \neq 0$.  

The proof now follows from the four cases addressed in Propositions \ref{prop:3k++}, \ref{prop:3k--}, \ref{prop:3k+-} and \ref{prop:3k-+} below, which depend on the sign of $n$ and the sign of the surgery on $L(3,1)$. We obtain a contradiction in each case, except when $n = 3$ or $-6$.  These exceptional cases can be realized through the band surgeries in Figures~\ref{fig:bandings-1} and \ref{fig:bandings-2} respectively.   % We write $|n|=3k$, ??? 
\end{proof}

We now proceed through the case analysis described in the proof of Proposition~\ref{prop:3k}.  %However, first we need two observations.  The first is that an extension of a self-conjugate $\spinc$ structure on $Y_p(K)$ over the cobordism $W_p(K)$ restricts to be self-conjugate on $Y$ as well.  The second is that $i = 0$ corresponds to a self-conjugate $\spinc$ structure on $L(n,1)$; if $n$ is even, then $i = \frac{n}{2}$ is the other self-conjugate $\spinc$ structure.   

\begin{proposition}\label{prop:3k++}
If $k \geq 2$, then $L(3k,1)$ cannot be obtained by $+k/1$-surgery on a null-homologous knot in $L(3,1)$.   
\end{proposition}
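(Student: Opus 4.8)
The plan is to split the argument according to the parity of $k$. When $k$ is even, $n:=3k\geq 6$ is even and positive, hence different from $2$ and $4$, so Proposition~\ref{prop:even} immediately rules out a distance one surgery from $L(3,1)$ to $L(n,1)=L(3k,1)$ (it would force $n<0$). Thus the substantive case is $k$ odd, where $k\geq 3$, and I would argue by playing the Ni--Wu surgery formula off against a direct computation of the correction term of the unique self-conjugate $\spinc$ structure on $L(3k,1)$.

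For $k$ odd, recall that $L(3,1)$ is an L-space with a unique self-conjugate $\spinc$ structure $\mathfrak{t}$, for which $d(L(3,1),\mathfrak{t})=\tfrac12$ by \eqref{eq:d-L(n,1)}. Since $K$ is null-homologous by Lemma~\ref{lem:homology-1}\eqref{homology-1-4}, I would apply Proposition~\ref{thm:ni-wu} with $p=k$ to the $\spinc$ structure $\mathfrak{t}_0$ on $L(3k,1)$ extending $\mathfrak{t}$ over $W_k(K)$, obtaining
\[
d(L(3k,1),\mathfrak{t}_0)=\tfrac12+d(L(k,1),0)-2N_{\mathfrak{t},0},\qquad N_{\mathfrak{t},0}=\max\{V_{\mathfrak{t},0},V_{\mathfrak{t},k}\}.
\]
Since the $V_{\mathfrak{t},i}$ are nonincreasing in $i$ by Property~\ref{thm:rasmussen-localh}, one has $N_{\mathfrak{t},0}=V_{\mathfrak{t},0}\geq 0$; plugging in $d(L(k,1),0)=\tfrac{k-1}{4}$ from \eqref{eq:d-L(n,1)} gives the upper bound $d(L(3k,1),\mathfrak{t}_0)\leq\tfrac{k+1}{4}$.

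The complementary ingredient is the identification of $\mathfrak{t}_0$. By Lemma~\ref{lem:spin-3k}\eqref{spin-3k:1} it is self-conjugate, and since $3k$ is odd, \eqref{eq:spin-formula} shows that $L(3k,1)$ carries exactly one self-conjugate $\spinc$ structure, namely the one labelled $i=0$, whose correction term equals $\tfrac{3k-1}{4}$ by \eqref{eq:d-L(n,1)}. Combining the two computations gives $\tfrac{3k-1}{4}=d(L(3k,1),\mathfrak{t}_0)\leq\tfrac{k+1}{4}$, which forces $k\leq 1$ and contradicts $k\geq 2$.

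The step I expect to be the crux is this last identification: the contradiction materializes only because the self-conjugate structure $\mathfrak{t}_0$ is forced to carry the \emph{largest} correction term among the $\spinc$ structures of $L(3k,1)$, and that in turn hinges on $3k$ being odd, so that there is a single self-conjugate structure with no ambiguity. When $k$ is even there are two self-conjugate structures on $L(3k,1)$, with correction terms $\tfrac{3k-1}{4}$ and $-\tfrac14$, and $\mathfrak{t}_0$ could a priori be the latter, which is consistent with the upper bound; this is precisely why the even case is instead handled via Proposition~\ref{prop:even} rather than by this comparison.
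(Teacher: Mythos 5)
Your proposal is correct and follows essentially the same route as the paper: the even case is dispatched by Proposition~\ref{prop:even}, and for $k$ odd the Ni--Wu formula (Proposition~\ref{thm:ni-wu}) together with Lemma~\ref{lem:spin-3k} and \eqref{eq:spin-formula} identifies the relevant structure as the unique self-conjugate one on $L(3k,1)$, giving $\frac{3k-1}{4}\leq \frac{1}{2}+\frac{k-1}{4}$ and hence a contradiction for $k\geq 2$. Your closing remark about why this comparison fails for $k$ even (two self-conjugate structures, one with $d=-\frac14$) accurately explains why the paper delegates that case to Proposition~\ref{prop:even}.
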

\begin{proof}
By Proposition~\ref{prop:even}, $3k$ cannot be even, so we may assume that $L(3k,1)$ is obtained by $k$-surgery on a null-homologous knot $K$ in $Y = L(3,1)$ for $k$ odd.  Consequently, there are unique self-conjugate $\spinc$ structures on $L(3k,1), L(3,1),$ and $L(k,1)$.  By \eqref{eq:spin-formula}, Proposition~\ref{thm:ni-wu}, and Lemma~\ref{lem:spin-3k}, 
\begin{equation}\label{eq:three-ds}
d(L(3k,1),0) \leq d(L(3,1),0) + d(L(k,1),0).
\end{equation}
Using the $d$-invariant formula \eqref{eq:d-L(n,1)}, when $k \geq 2$, we have 
\begin{equation*}
d(L(3k,1),0) - d(L(3,1),0) - d(L(k,1),0) = \frac{-1+3k}{4} - \frac{-1+3}{4} - \frac{-1+k}{4} > 0,
\end{equation*}
which contradicts \eqref{eq:three-ds}.   
\end{proof} 

\begin{proposition}\label{prop:3k--}
If $k \geq 1$, then $L(-3k,1)$ cannot be obtained by $-k/1$-surgery on a null-homologous knot in $L(3,1)$.   
\end{proposition}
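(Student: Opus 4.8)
The plan is to split the argument according to the parity of $k$: for $k$ even I would appeal to a definiteness obstruction, and for $k$ odd I would reverse orientation so that the Ni--Wu correction term formula applies in exactly the form used in the proof of Proposition~\ref{prop:3k++}.

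Suppose first that $k$ is even, so that $n = -3k$ is even and, since $n \leq -6$, is neither $2$ nor $4$. If $L(-3k,1)$ were obtained by $-k/1$-surgery on a null-homologous knot $K \subset L(3,1)$, the associated two-handle cobordism $W$ would be the surgery trace; as $K$ is null-homologous, $W$ has $b_2(W)=1$ and intersection form $(-k)$, hence is negative-definite. But Proposition~\ref{prop:even} shows that the two-handle cobordism of any distance one surgery from $L(3,1)$ to $L(n,1)$ with $n$ even and $n<0$ is positive-definite, a contradiction. Hence from now on I may assume $k$ is odd, and in particular $3k$ is odd.

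Now assume toward a contradiction that $L(-3k,1) = L(3,1)_{-k}(K)$ for some null-homologous $K$. Reversing orientation converts this negative surgery into a positive one: regarding $K$ as a knot $\overline{K}$ (its mirror) in $-L(3,1) = L(3,2)$, we get
\[
L(3k,1) \;=\; -L(-3k,1) \;=\; \bigl(-L(3,1)\bigr)_{k}(\overline{K}) \;=\; L(3,2)_{k}(\overline{K}),
\]
where $\overline{K}$ is null-homologous in the L-space $L(3,2)$, itself a $\Z/2$-homology sphere. I would then apply Proposition~\ref{thm:ni-wu} with $Y = L(3,2)$, $p = k > 0$, and $\mft$ the unique self-conjugate $\spinc$ structure on $L(3,2)$. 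By Lemma~\ref{lem:spin-3k}\eqref{spin-3k:1} the resulting structure $\mft_0$ on $L(3k,1)$ is self-conjugate, and since $3k$ is odd there is only one such structure, namely the class $i=0$ in the labeling of Theorem~\ref{thm:d-lens}. Using $N_{\mft,0} \geq 0$, Proposition~\ref{thm:ni-wu} gives
\[
d(L(3k,1),0) \;\leq\; d(L(3,2),\mft) + d(L(k,1),0).
\]
Finally, evaluating the three terms via \eqref{eq:d-L(n,1)} gives $d(L(3k,1),0) = \tfrac{3k-1}{4}$ and $d(L(k,1),0) = \tfrac{k-1}{4}$, while $d(L(3,2),\mft) = -d(L(3,1),0) = -\tfrac12$ because $L(3,2) = -L(3,1)$ and $d$-invariants change sign under orientation-reversal. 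The inequality becomes $\tfrac{3k-1}{4} \leq -\tfrac12 + \tfrac{k-1}{4}$, i.e.\ $2k \leq -2$, which is impossible for $k \geq 1$.

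The computation is a direct transcription of Proposition~\ref{prop:3k++}, so I do not expect a serious obstacle; the one point requiring care is the $\spinc$- and orientation-bookkeeping. One must confirm that the Ni--Wu class $\mft_0$ is genuinely the $i=0$ class of $L(3k,1)$ in the normalization of \eqref{eq:d-L(n,1)} (this is where oddness of $3k$ enters, via Lemma~\ref{lem:spin-3k} and \eqref{eq:spin-formula}), and that $d(L(3,2),\mft)$ is $-\tfrac12$ rather than $+\tfrac12$. It is precisely this sign --- the passage from $L(3,1)$ to $-L(3,1)$ shifting the relevant correction term by $-\tfrac12$ --- that makes the obstruction succeed for every $k \geq 1$, in contrast with the bound $k \geq 2$ in Proposition~\ref{prop:3k++}.
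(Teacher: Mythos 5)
Your proof is correct and follows essentially the same route as the paper: the even case is ruled out exactly as in the paper by pitting the negative-definite surgery trace against the positive-definiteness forced by Proposition~\ref{prop:even}, and the odd case reverses orientation and runs the Ni--Wu argument of Proposition~\ref{prop:3k++} with the sign change $d(-L(3,1)) = -\tfrac{1}{2}$, yielding the same numerical contradiction. The only cosmetic difference is that you write $-L(3,1)$ as $L(3,2)$ where the paper writes $L(-3,1)$.
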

\begin{proof}
Suppose that $L(-3k,1)$ is obtained by $-k/1$-surgery on a null-homologous knot in $L(3,1)$.    By Proposition~\ref{prop:even}, we cannot have that $3k$ is even.  Indeed, in the current case, the associated two-handle cobordism is negative-definite.  Therefore, $3k$ is odd, and we have unique self-conjugate $\spinc$ structures on $L(3k,1)$ and $L(k,1)$.  

By reversing orientation, $L(3k,1)$ is obtained by $+k$-surgery on a null-homologous knot in $L(-3,1)$.  We may now repeat the arguments of Proposition~\ref{prop:3k++} with a slight change.  We obtain that  
\[
d(L(3k,1),0) \leq -d(L(3,1),0) + d(L(k,1),0).
\]
By direct computation, 
\[ 
	d(L(3k,1),0) + d(L(3,1),0) - d(L(k,1),0) = \frac{-1+3k}{4} + \frac{1}{2} - \frac{-1+k}{4}  > 0.
\]
Again, we obtain a contradiction.  
\end{proof}

\begin{proposition}\label{prop:3k+-}
If $k \geq 2$, then $L(3k,1)$ cannot be obtained by $-k/1$-surgery on a null-homologous knot in $L(3,1)$.
\end{proposition}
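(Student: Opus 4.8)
The plan is to follow the template of Propositions~\ref{prop:3k++} and \ref{prop:3k--}, but here the self-conjugate $\spinc$ structure alone turns out to be insufficient, so a second one must be brought in. As a first step, Proposition~\ref{prop:even} rules out $3k$ being even (the two-handle cobordism is negative-definite, whereas $3k>0$ and $3k\neq 2,4$), so $k$ must be odd and thus $k\geq 3$. Reversing orientation, $-L(3k,1)$ is obtained by $+k$-surgery on the null-homologous knot $K$ in $-L(3,1)$, which is an L-space whose unique torsion $\spinc$ structure $\mft$ is self-conjugate. Applying Proposition~\ref{thm:ni-wu} to this surgery, identifying $\mft_0$ with the (necessarily unique) self-conjugate structure on $-L(3k,1)$ via Lemma~\ref{lem:spin-3k}\eqref{spin-3k:1}, and evaluating $d$-invariants with \eqref{eq:spin-formula} and \eqref{eq:d-L(n,1)}, the $i=0$ case of \eqref{ni-wu-formula} becomes
\[
-\tfrac{3k-1}{4} = -\tfrac12 + \tfrac{k-1}{4} - 2V_{\mft,0},
\]
which forces $V_{\mft,0}=\tfrac{k-1}{2}$.

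In contrast with the two previous propositions this identity is, by itself, not contradictory, so next I would exploit the $\spinc$ structure $\mft_1$. By Property~\ref{thm:rasmussen-localh} we have $V_{\mft,1}\in\{\tfrac{k-3}{2},\tfrac{k-1}{2}\}$, and since the $V_{\mft,i}$ are non-increasing, $N_{\mft,1}=V_{\mft,1}$. Using $d(L(k,1),1)=-\tfrac14+\tfrac{(k-2)^2}{4k}$, the $i=1$ case of \eqref{ni-wu-formula} then pins $d(-L(3k,1),\mft_1)$ to one of the two explicit rationals $\tfrac{-9k^2+15k+12}{12k}$ (when $V_{\mft,1}=\tfrac{k-3}{2}$) or $\tfrac{-9k^2-9k+12}{12k}$ (when $V_{\mft,1}=\tfrac{k-1}{2}$). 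On the other hand, $\mft_1$ is a $\spinc$ structure on $-L(3k,1)$, so this number must appear among the correction terms $\tfrac14-\tfrac{(2j-3k)^2}{12k}$, $0\leq j<3k$, coming from \eqref{eq:d-L(n,1)} and orientation reversal. Comparing: the second value lies strictly below the minimal correction term $\tfrac{1-3k}{4}$ of $-L(3k,1)$, while the first lies strictly between two consecutive correction terms of $-L(3k,1)$ --- equivalently, the two cases would require $(3k+2)^2-16$, respectively $(3k-2)^2-16$, to be a perfect square, and a brief factorization of $(3k\pm2)^2-m^2=16$ shows this fails for $k\geq 3$. Either way $d(-L(3k,1),\mft_1)$ is not an actual correction term of $-L(3k,1)$, a contradiction.

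The main obstacle --- and the reason this case is harder than Propositions~\ref{prop:3k++} and \ref{prop:3k--} --- is that the self-conjugate $\spinc$ structure gives only the vacuous inequality $k\geq 1$; one must therefore squeeze the quantitative datum $V_{\mft,0}=\tfrac{k-1}{2}$ out of $K$ and propagate it to the structure $\mft_1$ through Rasmussen's monotonicity Property~\ref{thm:rasmussen-localh}. The care needed is in confirming that $\mft_0$ really is the self-conjugate $\spinc$ structure on $-L(3k,1)$ (Lemma~\ref{lem:spin-3k}) and that the $\spinc$ identifications in Proposition~\ref{thm:ni-wu} are tracked correctly through the orientation reversal; once that is in place, the contradiction is elementary arithmetic.
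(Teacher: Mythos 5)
Your proposal is correct and follows essentially the same route as the paper's proof: rule out even $k$ via Proposition~\ref{prop:even}, reverse orientation to view the surgery as $+k$-surgery in $-L(3,1)$, extract $V_{\mft,0}=\tfrac{k-1}{2}$ from the $i=0$ case of \eqref{ni-wu-formula}, propagate to $i=1$ via Property~\ref{thm:rasmussen-localh}, and derive a contradiction by comparing with the correction terms of $-L(3k,1)$. The only difference is cosmetic: the paper phrases the final step as the nonexistence of integral roots of the quadratics $k(3j\pm 3)=j^2+3$, while you phrase it as the candidate value falling below the minimal correction term or requiring $(3k\mp 2)^2-16$ to be a perfect square, which is the same arithmetic.
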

\begin{proof}
As in the previous two propositions, Proposition~\ref{prop:even} implies that $k$ cannot be even.  Therefore, we assume that $k$ is odd.  We will equivalently show that if $k \geq 3$ is odd, then $L(-3k,1)$ cannot be obtained by $+k/1$-surgery on a null-homologous knot in $L(-3,1)$.   

Again, consider the statement of Proposition \ref{thm:ni-wu} in the case of the unique self-conjugate $\spinc$ structure on $L(3k,1)$.  Writing $\mft$ for the self-conjugate $\spinc$ structure on $L(-3,1)$, Equations \eqref{eq:d-L(n,1)} and  \eqref{ni-wu-formula} yield
\begin{equation*}
%\label{case:k-1}
	2N_{\mft, 0} = d(L(3k, 1), 0) - d(L(3,1), 0) + d(L(k, 1),0) = \left(-\frac{1}{4} + \frac{3k}{4}\right) - \frac{1}{2} +\left(-\frac{1}{4} + \frac{k}{4}\right),
\end{equation*}
and so $N_{\mft, 0} = \frac{k-1}{2}$.  Since $V_{\mft,0} \geq V_{\mft,k}$ by Property~\ref{thm:rasmussen-localh}, we have that $N_{\mft,0} = V_{\mft,0}$.   

Next we consider Proposition~\ref{thm:ni-wu} in the case that $\mft$ is self-conjugate on $L(-3,1)$ and $i=1$. From Property \ref{thm:rasmussen-localh}, we have that $V_{\mft,1}$ must be either $\frac{k-1}{2}$ or $\frac{k-3}{2}$.  Since $N_{\mft,1} = \max\{V_{\mft,1},V_{\mft,k-1}\} = V_{\mft,1}$, the same conclusion applies to $N_{\mft,1}$. 

We claim that there is no $\spinc$ structure on $L(-3k, 1)$ compatible with \eqref{ni-wu-formula} and $N_{\mft,1} = \frac{k-1}{2}$ or $\frac{k-3}{2}$. Suppose for contradiction that such a $\spinc$ structure exists.   Denote the corresponding value in $\Z/3k$ by $j$.  Of course, $j \neq 0$, since $j = 0$ is induced by $i = 0$ on $L(k,1)$.  

First, consider the case that $N_{\mft,1} = \frac{k-1}{2}$. Applying \eqref{ni-wu-formula} with $i = 1$ yields
\[
	k-1 =  \left(-\frac{1}{4} + \frac{(2j-3k)^2}{12k} \right) -  \frac{1}{2} + \left( -\frac{1}{4} + \frac{(2-k)^2}{4k} \right),
\]
for some $0 < j < 3k$.  This simplifies to the expression
\[
	k(3j + 3) = j^2 + 3.
\]
Thus $j$ is a positive integral root of the quadratic equation
\[
	f(j) = j^2 - 3kj -(3k-3).
\]
For $k > 0$, there are no integral roots with $0 < j < 3k$. 

Suppose next that $N_{\mft,1} = \frac{k-3}{2}$. Equation \eqref{ni-wu-formula} now yields
\[
	k-3 =  \left(-\frac{1}{4} + \frac{(2j-3k)^2}{12k}\right) -  \frac{1}{2} + \left( -\frac{1}{4} + \frac{(2-k)^2}{4k} \right)
\]
which simplifies to the expression
\[
	k(3j - 3) = j^2 + 3.
\]
Thus $j$ is an integral root of the quadratic equation
\[
	f(j) = j^2 - 3kj +(3k+3).
\]
However, the only integral roots of this equation for $k > 0$ occur when $k=2$ and $j=3$, and we have determined that $k$ is odd.  Thus, we have completed the proof. 
\end{proof}

%For the next case, $d$-invariants can similarly be used to establish the proposition which follows. However, the Casson invariant is more economical.

%%%  Alternate argument using the Casson invariant (doesn't work for n even) %%%
\begin{proposition}\label{prop:3k-+}
%If $k \geq 1$,  $L(-3k,1)$ cannot be obtained by $+k/1$-surgery on a null-homologous knot in $L(3,1)$.   
If $k = 1$ or $k>2$, then  $L(-3k,1)$ cannot be obtained by $+k/1$-surgery on a null-homologous knot in $L(3,1)$.   
\end{proposition}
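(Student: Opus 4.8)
The plan is to follow the strategy of Propositions~\ref{prop:3k++}--\ref{prop:3k+-}, splitting according to the parity of $k$. The crucial difference from those propositions is that here the target lens space $L(-3k,1)$ is negatively oriented, so Proposition~\ref{prop:even} no longer forces $k$ to be odd; the case of even $k$ must be handled directly, and this is exactly why $k=2$ is excluded from the statement. So assume $L(-3k,1)$ is obtained by $+k/1$-surgery on a null-homologous knot $K$ in $L(3,1)$, let $\mft$ be the unique self-conjugate $\spinc$ structure on $L(3,1)$, and let $\mft_0,\mft_1,\dots$ be the $\spinc$ structures on $L(-3k,1)$ furnished by Proposition~\ref{thm:ni-wu}, so that \eqref{ni-wu-formula} applies with base $L(3,1)$ and surgery coefficient $k$.

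If $k$ is odd (in particular if $k=1$), then $3k$ is odd, so $L(-3k,1)$ carries a unique self-conjugate $\spinc$ structure, which by Lemma~\ref{lem:spin-3k}\eqref{spin-3k:1} must be $\mft_0$; by \eqref{eq:d-L(n,1)} its $d$-invariant is $\tfrac{1-3k}{4}$. Substituting into \eqref{ni-wu-formula} together with $d(L(3,1),0)=\tfrac12$ and $d(L(k,1),0)=\tfrac{k-1}{4}$ yields $N_{\mft,0}=\tfrac{k}{2}$, which is not an integer, a contradiction. This disposes of $k=1$ and all odd $k\ge 3$.

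Now suppose $k$ is even; since $k=2$ is excluded by hypothesis, $k\ge 4$. Then $3k$ is even, and by \eqref{eq:spin-formula} and \eqref{eq:d-L(n,1)} the two self-conjugate $\spinc$ structures on $L(-3k,1)$ are the one with label $i=0$, of $d$-invariant $\tfrac{1-3k}{4}$, and the one with label $i=\tfrac{3k}{2}$, of $d$-invariant $\tfrac14$. By Proposition~\ref{prop:even} the two-handle cobordism $W=W_k(K)$ is positive-definite and the $\spin$ structure on $L(-3k,1)$ extending over $W$ is the one with $i=\tfrac{3k}{2}$; on the other hand, Lemma~\ref{lem:spin-3k}\eqref{spin-3k:2} shows that $\mft_0$ does \emph{not} extend to a $\spin$ structure over $W_k(K)$. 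Hence $\mft_0$ is the self-conjugate structure labelled $i=0$, and \eqref{ni-wu-formula} again gives $N_{\mft,0}=\tfrac{k}{2}$; since $V_{\mft,0}\ge V_{\mft,k}$ by Property~\ref{thm:rasmussen-localh}, in fact $V_{\mft,0}=\tfrac{k}{2}$.

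The rest of the argument mirrors the end of the proof of Proposition~\ref{prop:3k+-}. By Property~\ref{thm:rasmussen-localh}, $V_{\mft,1}\in\{\tfrac{k}{2},\tfrac{k}{2}-1\}$, and since $1\le k-1$ we have $N_{\mft,1}=V_{\mft,1}$. Writing $0<j<3k$ for the label of $\mft_1$ (nonzero since $i=0$ already accounts for $j=0$), so that $d(L(-3k,1),\mft_1)=\tfrac14-\frac{(2j-3k)^2}{12k}$, and feeding $d(L(k,1),1)=-\tfrac14+\frac{(2-k)^2}{4k}$ into \eqref{ni-wu-formula}, one obtains exactly the two Diophantine conditions from the proof of Proposition~\ref{prop:3k+-}: if $N_{\mft,1}=\tfrac{k}{2}$ then $j$ is a root of $j^2-3kj-(3k-3)$, which has no root with $0<j<3k$; and if $N_{\mft,1}=\tfrac{k}{2}-1$ then $j$ is a root of $j^2-3kj+(3k+3)$, whose only integral root for $k>0$ occurs at $k=2$, $j=3$. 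Since $k\ge 4$, both cases are impossible, which completes the proof. The step I expect to be the main obstacle is precisely this even-$k$ case: correctly identifying which of the two self-conjugate $\spinc$ structures on $L(-3k,1)$ is $\mft_0$, which forces the combined use of Proposition~\ref{prop:even} and Lemma~\ref{lem:spin-3k}\eqref{spin-3k:2}. It is also worth recording that the root $j=3$ of the second quadratic at $k=2$ is genuine and corresponds to the realizable surgery $L(3,1)\to L(-6,1)$, which is why the hypothesis must omit $k=2$.
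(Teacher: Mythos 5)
Your proposal is correct and follows essentially the same route as the paper's proof: you identify $\mft_0$ with the label $0$ on $L(-3k,1)$ by combining Proposition~\ref{prop:even} with Lemma~\ref{lem:spin-3k}\eqref{spin-3k:2}, compute $N_{\mft,0}=\tfrac{k}{2}$, and then rule out the $i=1$ case via the same two quadratics $j^2-3kj-(3k-3)$ and $j^2-3kj+(3k+3)$, with the exceptional root $k=2$, $j=3$ matching the genuine surgery to $-L(6,1)$. The only deviation is cosmetic: for odd $k$ (including $k=1$) you dispose of the case immediately from the non-integrality of $N_{\mft,0}=\tfrac{k}{2}$, whereas the paper runs the quadratic analysis uniformly in the parity of $k$; this is a harmless (indeed slightly cleaner) shortcut.
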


\begin{comment}
\begin{proof}
By the surgery formula for the Casson invariant as defined in \cite{BoyerLines}, we have
\begin{equation}\label{eq:casson-3k}
\lambda(L(-3k,1)) - \lambda(L(3,1)) = \frac{A(K)}{k} -\frac{1}{2} s(1,k),
\end{equation}
where $A(K) = \frac{1}{2}\Delta^{''}_K(1)$ and $s(1,k)$ is the Dedekind sum.  

Direct computation shows then that $\frac{A(K)}{k} = \frac{2-4k+3k^2}{9k}$.  Because $A(K)$ is an integer, this is equivalent to 
\[
2-4k+3k^2 \equiv 0 \pmod{18}.
\]
This implies that $k$ is even, which is a contradiction.  
\end{proof}
\end{comment}

%%%  Alternate argument using d-invariants %%%
%\begin{proposition}
%If $k \geq 1$,  $L(-3k,1)$ cannot be obtained by $+k/1$-surgery on a null-homologous knot in $L(3,1)$. 
%\end{proposition}
\begin{proof}
As a warning to the reader, this is the unique case where Proposition~\ref{prop:even} does not apply, and we must also allow for the case of $k$ even.  Other than this, the argument mirrors the proof of Proposition~\ref{prop:3k+-} with some extra care to identify the appropriate self-conjugate $\spinc$ structures.    

Consider the statement of Proposition \ref{thm:ni-wu} in the case that $\mft$ is self-conjugate on $L(3,1)$ and $i=0$ on $L(k,1)$. We would like to determine which $\spinc$ structure on $L(-3k,1)$ is induced by \eqref{ni-wu-formula}.  As in the previous cases, when $k$ is odd, $\mft_0$ is the unique self-conjugate $\spinc$ structure on $L(-3k, 1)$, which corresponds to $0$.  We now establish the same conclusion if $k$ is even.  In this case, the proof of Lemma~\ref{lem:spin-3k} shows that the $\spinc$ structures $\mft_0$ and $\mft_{\frac{k}{2}}$, as in Proposition~\ref{thm:ni-wu}, give the two self-conjugate $\spinc$ structures on $L(-3k,1)$.  On the other hand, \eqref{eq:spin-formula} asserts that the numbers $0$ and $3k/2$ also correspond to the two self-conjugate $\spinc$ structures on $L(-3k,1)$.  Proposition~\ref{prop:even} shows that $3k/2$ corresponds to the $\spin$ structure that extends over the two-handle cobordism, while Lemma~\ref{lem:spin-3k}\eqref{spin-3k:2} tells us that $\mft_0$ is the $\spin$ structure that does not extend.  In other words, $\mft_0$ corresponds to $0$ on $L(-3k,1)$.  

Equations \eqref{eq:d-L(n,1)} and \eqref{ni-wu-formula} now yield
\begin{equation*}
%\label{case:k-1}
	2N_{\mft, 0} = d(L(3k, 1), 0) + d(L(3,1), 0) + d(L(k, 1),0) = \left(-\frac{1}{4} + \frac{3k}{4}\right) + \frac{1}{2} +\left(-\frac{1}{4} + \frac{k}{4}\right),
\end{equation*}
and so $N_{\mft, 0} = \frac{k}{2}$.  Since $V_{\mft,0} \geq V_{\mft,k}$, we have that $N_{\mft,0} = V_{\mft,0}$.   

Next we consider Proposition~\ref{thm:ni-wu} in the case that $\mft$ is self-conjugate on $L(3,1)$ and $i=1$. From Property \ref{thm:rasmussen-localh}, we have that $V_{\mft,1}$ must be either $\frac{k}{2}$ or $\frac{k-2}{2}$.  Since $N_{\mft,1} = \max\{V_{\mft,1},V_{\mft,k-1}\} = V_{\mft,1}$, we also have $N_{\mft,1} = \frac{k}{2}$ or $\frac{k-2}{2}$. 

We claim that there is no $\spinc$ structure on $L(-3k, 1)$ compatible with $N_{\mft,1} = \frac{k}{2}$ or $\frac{k-2}{2}$ in \eqref{ni-wu-formula}. Suppose for the contrary such a $\spinc$ structure exists corresponding to $j \in \Z/3k$.  Again, $j \neq 0$.  

In the case that $N_{\mft,1} = \frac{k}{2}$, then \eqref{ni-wu-formula} yields
\[
	k =  \left(-\frac{1}{4} + \frac{(2j-3k)^2}{12k} \right) +  \frac{1}{2} + \left( -\frac{1}{4} + \frac{(2-k)^2}{4k} \right),
\]
which simplifies to the expression
\[
	k(3j + 3) = j^2 + 3.
\]
As discussed in the proof of Proposition~\ref{prop:3k+-}, there are no integral solutions with $k \geq 1$ and $0 < j < 3k$.  

In the case that $N_{\mft,1} = \frac{k-2}{2}$, then Equations \eqref{eq:d-L(n,1)} and \eqref{ni-wu-formula} now yield
\[
	k-2 =  \left(-\frac{1}{4} + \frac{(2j-3k)^2}{12k}\right) +  \frac{1}{2} + \left( -\frac{1}{4} + \frac{(2-k)^2}{4k} \right),
\]
which simplifies to the expression
\[
	k(3j - 3) = j^2 + 3.
\]
As discussed in the proof of Proposition \ref{prop:3k+-}, there is a unique integral root corresponding to $k = 2$ and $j = 3$.  This exceptional case arises due to the distance one lens space surgery from $L(3,1)$ to $-L(6,1)$ described in \cite[Corollary 1.4]{BakerOPT}\footnote{While this is written as $L(6,1)$ in \cite{BakerOPT}, Baker was working in the unoriented category.} (see also \cite[Table A.5]{MartelliPetronio}).
\end{proof}

\subsection{From $L(3,1)$ to $L(n, 1)$ where $|n|\equiv 1$ (mod 3)}
The goal of this section is to prove the following.

\begin{proposition}\label{prop:3k+1}
There is no distance one surgery from $L(3,1)$ to $L(n,1)$ where $|n| = 3k+1$, except when $n = \pm 1, 4$ or $7$.  
\end{proposition}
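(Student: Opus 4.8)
The plan is to follow the template of the proof of Proposition~\ref{prop:3k}, replacing the Ni--Wu surgery formula (Proposition~\ref{thm:ni-wu}) by its generalization to homologically essential knots in $L(3,1)$, Proposition~\ref{prop:3k+1-surgery-formula}, which is established in Section~\ref{sec:mappingcone} via the mapping cone formula for rationally null-homologous knots.  Suppose that $L(n,1)$ with $|n|=3k+1$ arises from a distance one surgery on a knot $K\subset L(3,1)$.  By Lemma~\ref{lem:homology-1}, $K$ is homologically essential, one may normalize the meridian of $K$ on the exterior $M$ to be $3m+\ell$, the slope yielding $L(n,1)$ is then $(3k+1)m+k\ell$, and by Lemma~\ref{lem:cobordism}\eqref{cob:definite} the two-handle cobordism $W$ is negative-definite.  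Since $k=0$ yields $S^3$, which is on the allowed list, I would assume $k\ge 1$.  If $n$ is even, Proposition~\ref{prop:even} forces either $n\in\{2,4\}$ or else $n<0$ with $W$ positive-definite; the latter contradicts Lemma~\ref{lem:cobordism}\eqref{cob:definite}, and since $|n|=3k+1\ne 2$ the former gives $n=4$.  So it remains to treat $n$ odd, i.e.\ $k$ even, for both signs of $n$.

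For odd $n$ I would reproduce the argument of Propositions~\ref{prop:3k+-} and \ref{prop:3k-+}, with \eqref{ni-wu-formula} replaced by Proposition~\ref{prop:3k+1-surgery-formula}.  Note first that $L(3,1)$ and $L(n,1)$ each carry a unique self-conjugate $\spinc$ structure, by \eqref{eq:spin-formula}.  Using \eqref{eq:spin-formula}, Lemma~\ref{lem:cobordism}\eqref{cob:spin}, and the analogue of Lemma~\ref{lem:spin-3k} for homologically essential knots, one checks that the surgery formula at the self-conjugate structure on $L(3,1)$ lands on the self-conjugate structure on $L(n,1)$; evaluating it there against \eqref{eq:d-L(n,1)} then pins down explicitly the correction integer $N_0$ appearing in the formula, whence the monotonicity property (as in Property~\ref{thm:rasmussen-localh}) forces $N_0=V_0$ and confines $V_1$, hence $N_1$, to one of two consecutive values.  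Writing the surgery formula at $i=1$ and clearing denominators then yields a quadratic Diophantine condition on the label $j\in\Z/(3k+1)$ of the resulting $\spinc$ structure; a root analysis as in the proof of Proposition~\ref{prop:3k+-} shows that the only admissible solution in the range $0<j<|n|$ corresponds to $k=2$, i.e.\ $n=7$, and that no solution exists when $n<0$.  Together with the cases $k=0$ (giving $n=\pm 1$) and $n$ even (giving $n=4$), this leaves exactly $n=\pm 1,4,7$, and these values are realized by the bandings of Figures~\ref{fig:bandings-1} and \ref{fig:bandings-2}.

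The step I expect to be the main obstacle is the $\spinc$-structure bookkeeping rather than the arithmetic.  Because $K$ is homologically essential, the clean correspondence $\spinc(Y_p(K))\cong\spinc(Y)\oplus\Z/p$ of the null-homologous case is replaced by the indexing of the mapping cone for the slope $(3k+1)m+k\ell$, so one must (a) establish Proposition~\ref{prop:3k+1-surgery-formula} with the correct normalization of its lens-space correction term---the content of Section~\ref{sec:mappingcone}, and by far the deepest input---and (b) correctly identify which $\spinc$ structure on $L(n,1)$ the formula assigns to the self-conjugate structure on $L(3,1)$, where Lemma~\ref{lem:cobordism}\eqref{cob:spin} and the analogue of Lemma~\ref{lem:spin-3k} come in.  A secondary point is the case $n<0$: the linking-form obstructions of Lemma~\ref{lem:homology-2} and the remark following it already eliminate many negative values, and the quadratic condition above, having no admissible solution, disposes of the remainder, so no separate orientation-reversal argument is needed.
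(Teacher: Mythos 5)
Your outline is essentially the paper's argument: the even case is dispatched exactly as you say by Proposition~\ref{prop:even} together with Lemma~\ref{lem:cobordism}\eqref{cob:definite}; the values $n=\pm1,4,7$ are realized by the bandings in Figures~\ref{fig:bandings-1} and \ref{fig:bandings-2}; and for positive odd $n=3k+1$ the paper likewise computes $N_0=\tfrac{k}{2}$ from \eqref{eq:3k+1-spin}, feeds $N_1\in\{\tfrac{k}{2},\tfrac{k-2}{2}\}$ into \eqref{eq:3k+1-nonspin}, and rules out integral roots of the two resulting quadratics for $k\geq 4$ (it does not need your stronger claim that $k=2$ gives the only admissible root, since $n=7$ is realized, though that claim is arithmetically correct). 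Note also that the $\spinc$ bookkeeping you flag in (b) is already packaged into the statement of Proposition~\ref{prop:3k+1-surg} (i.e.\ Proposition~\ref{prop:3k+1-surgery-formula}), whose $\mft$ is by definition the self-conjugate structure on $Y'$; no separate analogue of Lemma~\ref{lem:spin-3k} is invoked at the level of Proposition~\ref{prop:3k+1}.

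The one substantive misstep is your treatment of negative odd $n$. For $Y'=L(-(3k+1),1)$, the first equation of Proposition~\ref{prop:3k+1-surg} already gives
\[
2N_0 \;=\; -\,d(L(3k+1,1),0) + d(L(3k+1,3),1) \;=\; -\frac{3k}{4}+\frac{k}{4} \;<\;0,
\]
contradicting $N_0\geq 0$; this is the paper's argument, and it is where the computation you propose would in fact terminate. Your proposed route through the quadratic at the second $\spinc$ structure is not even available there, since \eqref{eq:3k+1-nonspin} is only asserted under the hypothesis $N_0\geq 2$, which can never hold in this case. Likewise, Lemma~\ref{lem:homology-2} eliminates none of these negative values: a negative $n$ with $|n|=3k+1$ is congruent to $2\pmod 3$, so that lemma (which concerns $n\equiv 1\pmod 3$) is relevant to the $|n|=3k-1$ family, not here. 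These points are easily repaired---the calculation you propose produces the contradiction one step earlier than you expect---but as written the negative case is attributed to the wrong equation and to an inapplicable linking-form obstruction.
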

 As a preliminary, we use \eqref{eq:d-lens} to explicitly compute the $d$-invariant formulas that will be relevant here.  For $k \geq 0$,   
\begin{align}
\label{eq:d(L(3k+1,1),j)} & d(L(3k+1,1),j)= -\frac{1}{4} + \frac{(-1+2j-3k)^2}{4(3k+1)} \\ 
\label{eq:d(L(3k+1,1),0)} & d(L(3k+1,1),0)= \frac{3k}{4} \\ 
\label{eq:d(L(3k+1,3),1)} & d(L(3k+1,3),1)= \frac{k}{4} \\
\label{eq:d(L(3k+1,3),4)} & d(L(3k+1,3),4)= \frac{8-11k+3k^2}{4(3k+1)}.  
\end{align}  

We will also need the following proposition about the $d$-invariants of surgery, proved in Proposition~\ref{prop:3k+1-surgery-formula} in Section \ref{sec:mappingcone}.  This can be seen as a partial analogue of Proposition~\ref{thm:ni-wu} for homologically essential knots.

\begin{proposition}\label{prop:3k+1-surg}
Let $K$ be a knot in $L(3,1)$.  Suppose that a distance one surgery on $K$ produces an L-space $Y'$ where $|H_1(Y')| = 3k+1$ is odd.  Then there exists a non-negative integer $N_0$ satisfying 
\begin{equation}
\label{eq:3k+1-spin} d(Y',\mft) + d(L(3k+1,3),1) = 2N_0,
\end{equation}
where $\mft$ is the unique self-conjugate $\spinc$ structure on $Y'$.  

Furthermore, if $N_0 \geq 2$, then there exists $\mft' \in \spinc(Y')$ and an integer $N_1$ equal to $N_0$ or $N_0- 1$ satisfying
\begin{equation}
\label{eq:3k+1-nonspin} d(Y',\mft') + d(L(3k+1,3),4) = 2N_1. 
%\label{eq:3k+1-bonus} -d(L(3k+1,1),j) & = d(L(3k+1,3),7) - 2V_2
\end{equation}
\end{proposition}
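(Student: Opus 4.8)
The plan is to establish this as Proposition~\ref{prop:3k+1-surgery-formula} by running the Ozsv\'ath--Szab\'o mapping cone formula for rationally null-homologous knots \cite{OSRational}, in direct analogy with the proof of Proposition~\ref{thm:ni-wu}, but now keeping track of the refined relative $\spinc$ bookkeeping forced by the fact that $K$ generates $H_1(L(3,1)) = \Z/3$. By Lemma~\ref{lem:homology-1} we may assume $K$ is homologically essential, that the meridian of $K$ on the exterior $M$ is $3m + \ell$ for a suitable dual curve $m$ to the rational longitude $\ell$, and that $Y' = M((3k+1)m + k\ell)$; since $Y'$ is an L-space, so is $L(3,1) = M(3m+\ell)$, and by Lemma~\ref{lem:cobordism} the two-handle cobordism $W \colon L(3,1) \to Y'$ is negative-definite.

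First I would set up the mapping cone: feeding the surgery slope of Lemma~\ref{lem:homology-1}\eqref{homology-1-3} into \cite{OSRational} presents $HF^+(Y')$ as the homology of a complex $\Xplus$ built from large-surgery pieces $\Aplus_{\mfs,s}$, indexed by a relative $\spinc$ parameter $s$, together with copies $\Bplus_{\mfs,s} \cong HF^+(L(3,1)) \cong \T^+$, with maps $\vplus_{\mfs,s}$ and $\hplus_{\mfs,s}$. Because $Y'$ and $L(3,1)$ are both L-spaces, the usual rank comparison (as in \cite{NiWu}) forces $\Aplus_{\mfs,s} \cong \T^+$ for all $\mfs$ and $s$, so that $\vplus_{\mfs,s}$ and $\hplus_{\mfs,s}$ are, up to grading shift, multiplication by $U^{V_{\mfs,s}}$ and $U^{H_{\mfs,s}}$ for non-negative integers $V_{\mfs,s}$ and $H_{\mfs,s}$; these obey the monotonicity of Property~\ref{thm:rasmussen-localh} in $s$, the conjugation symmetry $V_{\mfs,s} = H_{\bar\mfs,-s}$, and the standard vanishing $H_{\mfs,s}=0$ for $s \gg 0$ and $V_{\mfs,s}=0$ for $s \ll 0$.

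Next I would pin down the grading shifts of the $\Bplus_{\mfs,s}$-towers inside $\Xplus$. These are quadratic-in-$s$ quantities depending only on the homological data --- the order $3$, the linking form, and the framing --- so they can be computed from a model knot: taking $K$ to be a core of the genus-one Heegaard splitting of $L(3,1)$, one has $V_{\mfs,s}=H_{\mfs,s}=0$ and $M((3k+1)m+k\ell)=L(3k+1,k)$, which since $3k\equiv-1\pmod{3k+1}$ is homeomorphic to $L(3k+1,-3)\cong -L(3k+1,3)$. Hence the shifts are precisely the correction terms $d(-L(3k+1,3),\cdot)=-d(L(3k+1,3),\cdot)$, under the natural identification of $\spinc(Y')$ with $\spinc(L(3k+1,3))$. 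Combining this with the computation of the tower that survives in $\Xplus$ yields, for each $\mft_j \in \spinc(Y')$, a formula of the shape
\[
d(Y',\mft_j) = -d\bigl(L(3k+1,3),j'\bigr) + 2N_j, \qquad N_j \geq 0,
\]
where $j' \in \spinc(L(3k+1,3))$ corresponds to $\mft_j$ and $N_j = \max\{V_{\mfs,a},V_{\bar\mfs,b}\}$ for indices $a,b$ read off from the truncated zig-zag governing $\mft_j$; the sign $+2N_j$ and the non-negativity reflect that $W$ is negative-definite, so these correction terms can only exceed those of the model.

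Finally I would specialize. Since $3k+1$ is odd, $k$ is even, so by \eqref{eq:spin-formula} the unique self-conjugate $\spinc$ structure on $L(3k+1,3)$ corresponds to $i=1$; one then checks, by the homologically essential analogue of Lemma~\ref{lem:spin-3k}, that the self-conjugate $\mft$ on $Y'$ matches it, and here conjugation symmetry collapses the zig-zag so that $N_{\mft}$ reduces to a single value, which we call $N_0$. With $d(L(3k+1,3),1)=\tfrac{k}{4}$ from \eqref{eq:d(L(3k+1,3),1)}, the displayed formula rearranges to \eqref{eq:3k+1-spin}. Passing to the $\spinc$ structure $\mft'$ whose governing zig-zag is shifted by one relative $\spinc$ step from that of $\mft$ --- one computes that it corresponds to $i=4$ --- the same analysis gives $d(Y',\mft')=-d(L(3k+1,3),4)+2N_1$, and invoking \eqref{eq:d(L(3k+1,3),4)} produces \eqref{eq:3k+1-nonspin} once one knows $N_1\in\{N_0,N_0-1\}$. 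This last step is the main obstacle, and it is precisely where the hypothesis $N_0\geq 2$ is used: for $\mft$ the relevant part of $\Xplus$ is short and conjugation-symmetric, so the surviving tower is pinned down unconditionally, whereas for $\mft'$ one must show that the minimal grading in the truncated mapping cone is realized at the expected $\Bplus$-node, so that $N_1$ is the analogous maximum over indices shifted by one from those of $\mft$, which Property~\ref{thm:rasmussen-localh} then forces into $\{N_0,N_0-1\}$. When $N_0$ is small this identification of the extremal node can fail (for instance, when $k=0$ one has $Y'=S^3$ and there is no such $\mft'$ at all), and $N_0\geq 2$ is the threshold past which the truncated zig-zag for $\mft'$ is long enough for the simple formula to hold. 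Carrying out this truncation analysis, together with the $\spinc$-matching just mentioned, is the technical core of Section~\ref{sec:mappingcone}.
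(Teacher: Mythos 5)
Your overall scaffolding (mapping cone, model-knot comparison of grading shifts, the Cochran--Horn identification of the $\spinc$ structures $1$ and $4$ on $L(3k\pm1,3)$) matches the paper, but you skip the one maneuver the paper's proof actually hinges on: reversing orientation. When $|H_1(Y')| = 3k+1$ the two-handle cobordism is negative-definite and, as the paper points out at the end of Section~\ref{sec:mc-preliminaries}, the truncated mapping cone then has one more $\Bplus$ than $\Aplus$. In that shape the bottom of the surviving tower is \emph{not} computed by projecting onto a single $\Aplus_\xi$ with a correction by $2\max\{V_{\mfs,a},V_{\mfs,b}\}$ of the invariants of $K\subset L(3,1)$: in the L-space situation the acyclic pieces cancel an $\Aplus$ against a $\Bplus$, and the $d$-invariant is read off from a surviving $\Bplus$-node whose identity depends on which of $V_\xi,H_\xi$ vanish, not from an $\Aplus$-node. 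The correct non-negative integers in the statement are $V$-type invariants of the induced knot in $-L(3,1)$ (dual complexes), and these are not maxima of the $V_\xi$'s of $K$. The $S^3$ analogue makes the failure concrete: Ni--Wu's formula holds only for positive slopes, and for large negative surgery on the left-handed trefoil one has $V_0=0$ while the actual $d$-invariant exceeds the "model plus own $V$'s" prediction by $2$ (equivalently, one must use the $V$'s of the mirror, which are not determined by the $V$'s of the knot). Consequently your key displayed formula $d(Y',\mft_j) = -d(L(3k+1,3),j') + 2N_j$ with $N_j$ a max of $V$'s of $K$ is unjustified, and the ensuing step pinning $N_1\in\{N_0,N_0-1\}$ via Property~\ref{thm:rasmussen-localh} inherits the gap, since that monotonicity applies to the $V_\xi$'s of a fixed knot under $PD[\mu]$-shifts and you have not shown your $N_j$'s are such invariants.

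The paper's proof avoids all of this by passing to the positive-definite cobordism $-L(3,1)\to -Y'$ and re-running Sections~\ref{sec:mc-preliminaries}--\ref{sec:mc-truncation} verbatim for the induced knot in $-L(3,1)$, where the truncation again has one more $\Aplus$ than $\Bplus$ so that Lemmas~\ref{lem:spin-surg}, \ref{lem:proj-qi} and \ref{lem:proj-qimu} apply; the hypothesis $N_0\geq 2$ is exactly the hypothesis $V_{\xi_0}\geq 2$ of Lemma~\ref{lem:proj-qimu} for that reversed knot. The model computation is then $-3$-surgery on $L_2$ and $+k$-surgery on $L_1$, which slam-dunks to $(3k+1)/3$-surgery on the unknot, i.e.\ $L(3k+1,3)$, and the output $d(-Y',\mft) = d(L(3k+1,3),i) - 2N$ rearranges to \eqref{eq:3k+1-spin} and \eqref{eq:3k+1-nonspin}. (Your observation that the direct model gives $L(3k+1,k)\cong -L(3k+1,3)$ is correct and consistent with this, but it does not by itself repair the tower-location argument.) To fix your write-up, either reverse orientation as the paper does, or redo the truncation analysis honestly for the "extra $\Bplus$" shape and identify the resulting integers with $V$-invariants of the reversed knot before invoking Property~\ref{thm:rasmussen-localh}.
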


With the above technical result assumed, the proof of Proposition~\ref{prop:3k+1} will now follow quickly. The strategy of proof is similar to that used in the case of $L(3k,1)$. 

\begin{proof}[Proof of Proposition~\ref{prop:3k+1}]
By Lemma~\ref{lem:cobordism} and Proposition~\ref{prop:even}, we see that $n$ must be odd or $n = 4$.  In the latter case, we construct a coherent band surgery from the torus knot $T(2,3)$ to $T(2,4)$ in Figure~\ref{fig:bandings-2}, which lifts to a distance one surgery from $L(3,1)$ to $L(4,1)$.  Therefore, for the remainder of the proof, we assume that $n$ is odd.   We also directly construct a non-coherent band surgery from $T(2,3)$ to $T(2,7)$ and the unknot in Figure~\ref{fig:bandings-1}, so we now focus on ruling out all even values of $k \geq 4$. 

We begin by ruling out distance one surgeries to $+L(3k+1,1)$ with $k \geq 4$. Since $n=3k+1$ is odd, there is a unique self-conjugate $\spinc$ structure on $L(3k+1, 1)$. By Equations \eqref{eq:d(L(3k+1,1),0)}, \eqref{eq:d(L(3k+1,3),1)} and \eqref{eq:3k+1-spin}, we have $N_0 = \frac{k}{2}$.  Since $k$ is at least 4, we have $N_0 \geq 2$.  We claim that there is no solution to Equation \eqref{eq:3k+1-nonspin} with $N_1 = \frac{k}{2}$ or $\frac{k-2}{2}$.  This will complete the proof for the case of $+L(3k+1,1)$.  

First, consider the case of $N_1 = \frac{k}{2}$.  Simplifying Equation \eqref{eq:3k+1-nonspin} as in the proof of Proposition~\ref{prop:3k+-} we obtain
\[
j^2-(1+3k)j+ (2-3k) = 0.  
\]
It is straightforward to see that there are no non-negative integral roots of the quadratic equation for positive $k$.  

Next, we consider $N_1 = \frac{k-2}{2}$.  In this case, \eqref{eq:3k+1-nonspin} implies
\[
j^2-(1+3k)j + (3k+4) = 0.
\]
The roots are of the form
\[
j = \frac{1}{2}(1 + 3k \pm \sqrt{9k^2-6k-15}).  
\]
It is straightforward to verify that for $k \geq 4$, the lesser root is always strictly between 1 and 2, while the greater root is strictly between $3k-1$ and $3k$.  Therefore, there are no integer solutions.  This completes the proof for the case of $+L(3k+1,1)$.  

To complete the proof of Proposition~\ref{prop:3k+1}, it remains to show that $L(-3k+1,1)$, with $k > 0$ even, cannot be obtained from a distance one surgery along a homologically essential knot in $L(3,1)$. Proposition~\ref{prop:3k+1-surg} establishes 
\begin{equation}\label{eq:3k+1-neg}
d(L(3k+1,1),0) = d(L(3k+1,3),1) - 2N_0
\end{equation}  
for some non-negative integer $N_0$.  However, from Equations \eqref{eq:d(L(3k+1,1),0)} and \eqref{eq:d(L(3k+1,3),1)}, we have that
\[
d(L(3k+1,1),0) = \frac{3k}{4} > \frac{k}{4} = d(L(3k+1,3),1),
\]
which contradicts \eqref{eq:3k+1-neg}.  
\end{proof}

\subsection{From $L(3,1)$ to $L(n, 1)$ where $|n|\equiv -1$ (mod 3)}
In this section, we handle the final case in the proof of Theorem~\ref{thm:main}:

\begin{proposition}\label{thm:3k-1}
There is no distance one surgery from $L(3,1)$ to $L(n,1)$, where $|n| = 3k - 1 > 0$, except when $n = \pm 2$.  
\end{proposition}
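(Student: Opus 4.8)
The plan is to run the $3k-1$ analogue of the argument for Proposition~\ref{prop:3k+1}, with the two-handle cobordism now positive-definite (Lemma~\ref{lem:cobordism}\eqref{cob:definite}) and with extra care devoted to even orders. Fix $K \subset L(3,1)$ with a distance one surgery to $Y' = L(n,1)$, where $|n| = 3k-1 > 0$. When $k=1$ we get $n = \pm 2$, realized by the coherent bandings of Corollary~\ref{cor}, so assume $k \ge 2$. By Lemma~\ref{lem:homology-1} the knot $K$ is homologically essential and the cobordism $W\colon L(3,1)\to Y'$ has $b_2^+(W)=1$, $b_2^-(W)=0$. Two families fall to the homological tools of Section~\ref{sec:homology}: if $n<0$ is odd then $n=-(3k-1)\equiv 1\pmod 3$, so Lemma~\ref{lem:homology-2} obstructs the surgery; and if $n>0$ is even then $k$ is odd and $n=3k-1\notin\{2,4\}$, so Proposition~\ref{prop:even} obstructs it. This leaves precisely $n=3k-1>0$ (forcing $k$ even) and $n=-(3k-1)<0$ with $k$ odd; in contrast with Proposition~\ref{prop:3k+1}, these negative even orders are \emph{not} excluded by Proposition~\ref{prop:even}, so the $d$-invariant machinery is genuinely required for them.

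For the two remaining families I would invoke Proposition~\ref{prop:3k-1-surgery-formula}, the $3k-1$ counterpart of Proposition~\ref{prop:3k+1-surg}: a distance one surgery from $L(3,1)$ to an L-space $Y'$ with $|H_1(Y')|=3k-1$ yields a non-negative integer $N_0$ and a ``$\spin$ relation'' tying $d(Y',\mft)$, for the self-conjugate (or distinguished $\spin$) structure $\mft$ on $Y'$ that extends over $W$, to a $d$-invariant of the auxiliary lens space $L(3k-1,3)$, together with a second relation for some $\mft'\in\spinc(Y')$ and $N_1\in\{N_0,N_0-1\}$ whenever $N_0$ is large enough. As a preliminary I would record, via the recursion \eqref{eq:d-lens}, closed forms for $d(L(3k-1,1),0)$, $d\!\left(L(3k-1,1),\tfrac{3k-1}{2}\right)$, $d(L(3k-1,3),1)$ and $d(L(3k-1,3),4)$, exactly as in \eqref{eq:d(L(3k+1,1),0)}--\eqref{eq:d(L(3k+1,3),4)}.

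The case $n=3k-1>0$ should then collapse immediately: since $d(Y',\mft)=d(L(3k-1,1),0)$ already exceeds the auxiliary $d$-invariant on the other side of the $\spin$ relation, that relation alone forces $N_0$ to be negative or non-integral, a contradiction. For $n=-(3k-1)<0$ with $k$ odd we have $Y'=-L(3k-1,1)$, so $d$ flips sign; substituting the precomputed values into the $\spin$ relation pins $N_0$, and when $N_0\ge 2$ the second relation---simplified exactly as in the proof of Proposition~\ref{prop:3k+-}---becomes a quadratic $f(j)=0$ in the $\spinc$-label $j\in\Z/(3k-1)$ for each admissible $N_1$. I would then bound the roots of $f$ between consecutive integers (or show that its discriminant is never a perfect square) to rule out any solution with $0<j<3k-1$; the finitely many small $k$ with $N_0<2$ I would handle directly by the linking form (for instance $|n|=8$ is already excluded since $-3$ is not a square modulo $8$).

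The step I expect to be the main obstacle is Proposition~\ref{prop:3k-1-surgery-formula} itself, deferred to Section~\ref{sec:mappingcone}. Since $K$ is homologically essential, the Ni--Wu formula does not apply, and one must instead build the mapping cone of \cite{OSRational} for the rationally null-homologous knot $K$ along the slope identified in Lemma~\ref{lem:homology-1}\eqref{homology-1-3}, pin down exactly which $\spinc$ structures on $Y'$ and which self-conjugate structure on $L(3k-1,3)$ are matched---tracking both the positive-definiteness of $W$ and the parity of $3k-1$---and truncate the cone to extract the two displayed relations. Granting that, the only remaining work in this subsection is the routine (if fiddly) verification that the relevant quadratics have no integer root in range, together with careful bookkeeping of orientations and $\spin$ structures in the negative even case.
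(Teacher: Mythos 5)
Your overall route is the same as the paper's: dispose of $n=\pm2$ by the explicit banding, use Lemma~\ref{lem:homology-2} to kill negative odd orders and Proposition~\ref{prop:even} to kill positive even orders $\neq 2,4$, then feed the two surviving families into the deferred surgery formula (Proposition~\ref{prop:3k-1-surgery-formula}), getting an immediate contradiction from $d(L(3k-1,1),0)=\tfrac{3k-2}{4}>\tfrac{k-2}{4}=d(L(3k-1,3),1)$ in the positive odd case, and in the negative even case pinning $N_0$, reducing the second relation to quadratics in the $\spinc$-label $j$ with no roots in range, and handling the one low case $|n|=8$ by the linking form. All of that matches the paper.

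The genuine gap is in the negative even case, where $-L(3k-1,1)$ has \emph{two} self-conjugate $\spinc$ structures ($i=0$ and $i=\tfrac{3k-1}{2}$), and you never decide which one occurs in the ``$\spin$ relation''; worse, your parenthetical guess --- the structure that extends over $W$ --- is the wrong one. By Proposition~\ref{prop:even}, the $\spin$ structure extending over the positive-definite cobordism corresponds to $i=\bigl|\tfrac{n}{2}\bigr|$ and has $d=\tfrac14$; substituting that into the relation gives $2N_0=\tfrac{k-3}{4}$ rather than the correct $N_0=\tfrac{k-1}{2}$, which is not what the surgery formula asserts, leaves the residue classes $k\equiv 3 \pmod 8$ unobstructed at the first step, and changes all the subsequent quadratics. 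The paper resolves this by building an extra clause into Proposition~\ref{prop:3k-1-surgery}/\ref{prop:3k-1-surgery-formula}: if the self-conjugate structure $\mft$ in the formula is not equal to a given self-conjugate $\tilde{\mft}$, then $d(Y',\tilde{\mft})=\tfrac14$. Since $d(-L(3k-1,1),0)=-\tfrac{3k-2}{4}\neq\tfrac14$ for $k\geq 2$, the structure appearing in the formula must be $i=0$ (so it is precisely the one that does \emph{not} extend over $W$, in line with the analogue of Lemma~\ref{lem:spin-3k}\eqref{spin-3k:2}), and only then does $N_0=\tfrac{k-1}{2}\geq 2$ for $k\geq 5$ and the quadratic analysis proceed as you describe. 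You need to add this identification step --- and make sure the deferred proposition you invoke actually carries that extra clause --- for the negative even family to close.
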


As before, we state the $d$-invariant formulas that will be relevant for proving this theorem first.  
\begin{align}\label{eq:d(3k-1,3)}
d(L(3k-1,1),i) &= -\frac{1}{4} + \frac{(2i-3k+1)^2}{4(3k-1)} \\
\label{eq:d(3k-1,3)-1} d(L(3k-1,3),1) &= \frac{k-2}{4} \\
\label{eq:d(3k-1,3)-4} d(L(3k-1,3),4) &= \frac{3k^2-19k+18}{4(3k-1)}.  
\end{align}
The above follow easily from \eqref{eq:d-lens}.  

Next, we state a technical result about the $d$-invariants of surgery, similar to Proposition~\ref{prop:3k+1-surg} above, that we will also prove in Proposition~\ref{prop:3k-1-surgery-formula}.

\begin{proposition}\label{prop:3k-1-surgery}
Let $K$ be a knot in $L(3,1)$.  Suppose that a distance one surgery on $K$ produces an L-space $Y'$ where $|H_1(Y')| = 3k-1 > 0$.  Then, there exists a non-negative integer $N_0$ and a self-conjugate $\spinc$ structure $\mft$ on $Y'$ such that 
\begin{equation}
\label{eq:3k-1-spin} d(Y',\mft)  = d(L(3k-1,3),1) - 2N_0.
\end{equation}
In the case that $k$ is odd, if $\mft \neq \tilde{\mft}$ for some self-conjugate $\tilde{\mft}$, then $d(Y',\tilde{\mft}) = \frac{1}{4}$ .

Furthermore, if $N_0 \geq 2$, then there exists another $\spinc$ structure $\mft'$ on $Y'$ and an integer $N_1$ equal to $N_0$ or $N_0 - 1$ satisfying 
\begin{equation}
\label{eq:3k-1-nonspin} d(Y',\mft')  = d(L(3k-1,3),4) - 2N_1. 
\end{equation}
\end{proposition}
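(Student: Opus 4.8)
The plan is to reprove Proposition~\ref{prop:3k-1-surgery} in the same spirit as the null-homologous case of Proposition~\ref{thm:ni-wu}, but with the integer surgery formula of \cite{NiWu} replaced by the Ozsv\'ath--Szab\'o mapping cone formula for rationally null-homologous knots \cite{OSRational}; the detailed verification is then carried out in Proposition~\ref{prop:3k-1-surgery-formula} of Section~\ref{sec:mappingcone}. First I would set up the topology: since $|H_1(Y')| = 3k-1 \not\equiv 0 \pmod 3$, Lemma~\ref{lem:homology-1} shows $K$ is homologically essential, hence rationally null-homologous of order $3$, and after the choice of peripheral framing from Lemma~\ref{lem:homology-1}\eqref{homology-1-2}--\eqref{homology-1-3} its meridian on the exterior $M$ is $3m+\ell$ and the slope yielding $Y'$ is $(3k-1)m+k\ell$. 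Running the mapping cone with the knot replaced by a core of a genus-one Heegaard splitting of $L(3,1)$ (the ``model'') reproduces the lens space filling of the corresponding solid torus along $(3k-1)m + k\ell$, namely $L(3k-1,k) \cong L(3k-1,3)$, where the last identification uses $3k\equiv 1\pmod{3k-1}$ and is exactly why the correction terms of $L(3k-1,3)$ are the ones that appear.

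Next I would assemble the $\spinc$-decomposed mapping cone $\Xplus$ whose homology computes $HF^+(Y')$, built from the truncated complexes $\Aplus_s, \Bplus_s$ together with the vertical and horizontal maps $\vplus_s, \hplus_s$. Because $L(3,1)$ is an L-space, each $\Bplus_s$ is a single tower $\T^+$; because $Y'$ is assumed to be an L-space, the homology of $\Xplus$ in each $\spinc$ structure is again a single tower, and a standard argument forces each $\Aplus_s$ to be a single tower, with $\vplus_s$ and $\hplus_s$ equal, up to a grading shift, to multiplication by $U^{V_s}$ and $U^{H_s}$ for non-negative integers $V_s, H_s$. As in \cite{RasmussenThesis, NiWu}, the conjugation symmetry of the knot Floer complex (coming here from the order-$3$ rational Seifert surface) gives $H_s = V_{-s}$, and the $V_s$ satisfy the monotonicity $V_s \geq V_{s+1} \geq V_s - 1$ of Property~\ref{thm:rasmussen-localh}.

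Then I would read off the correction terms from the truncated $\Xplus$. The bottom grading of the surviving tower in the $\spinc$ structure with index $s$ is the minimum of the gradings attained through the arms at $s$, which produces a formula of the shape $d(Y',\mft_s) = d_{\mathrm{model}}(s) - 2N_s$, where $d_{\mathrm{model}}(s)$ is the corresponding correction term of $L(3k-1,3)$ and $N_s$ is the relevant maximum among the $V_j$ along the arm at $s$ (so $N_s = V_0$ when $s=0$). The conjugation involution of $\Xplus$ acts by $s \mapsto -s$; matching the fixed indices with the self-conjugate $\spinc$ structures via \eqref{eq:spin-formula} identifies $s=0$ with the label $1$ of $L(3k-1,3)$, which gives \eqref{eq:3k-1-spin} with $N_0 = V_0$, and identifies $s=1$ with the label $1 + 3 = 4$, which gives \eqref{eq:3k-1-nonspin} with $N_1 = \max\{V_1, V_{-1}\}$; the monotonicity of the $V_s$ forces $N_1 \in \{N_0, N_0-1\}$, and the hypothesis $N_0 \geq 2$ is what puts $\mft'$ in the range where the truncated mapping cone computes the correction term without boundary corrections. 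Finally, when $k$ is odd, $3k-1$ is even, so by Lemma~\ref{lem:cobordism} the two-handle cobordism $W\colon L(3,1)\to Y'$ is $\spin$ and positive-definite; the index-$0$ self-conjugate structure $\mft$ does not extend $\spin$-ly over $W$ (the analogue of Lemma~\ref{lem:spin-3k}\eqref{spin-3k:2}), so the other self-conjugate structure $\tilde{\mft}$ is the unique $\spin$ extension, and Lemma~\ref{lem:even-Lspace} applied to $(W,\mfs)$ gives $d(Y',\tilde{\mft}) = d(L(3,1),0) - \tfrac14 = \tfrac14$.

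I expect the main obstacle to be the $\spinc$- and grading-bookkeeping inside the rational mapping cone: verifying that the grading shifts combine so that $d_{\mathrm{model}}(s)$ is exactly the stated correction term of $L(3k-1,3)$ from Theorem~\ref{thm:d-lens}, and reconciling the mapping-cone index $s$ with the Ozsv\'ath--Szab\'o labelling of $\spinc(L(3k-1,3))$ and with the $\spin$ structure that extends over $W$, so that $s=0 \leftrightarrow$ label $1$ and $s=1 \leftrightarrow$ label $4$ can be asserted without ambiguity. A secondary technical point is justifying, in the order-$3$ setting, both the symmetry $H_s = V_{-s}$ and the monotonicity of the $V_s$, which are the expected analogues of the $S^3$ statements but require handling the rational Alexander grading with care.
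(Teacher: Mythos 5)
Your skeleton matches the paper's (the mapping cone for rationally null-homologous knots, the model knot $K_U$ given by a core of the genus-one Heegaard splitting whose surgery is $L(3k-1,3)$, and the fact that the absolute grading shift depends only on homological data), but the step where you actually extract the $d$-invariant from the truncated cone is where the argument breaks. You read off the bottom of the surviving tower as $d_{\mathrm{model}}(s)-2N_s$ with ``$N_s$ the relevant maximum among the $V_j$ along the arm,'' in Ni--Wu style. In the null-homologous setting that maximum is controlled by the monotonicity of the $V_j$; here the arms lying in a single $\spinc$ structure of $Y'$ differ by $PD[\lambda]$, and there is no inequality comparing $V_\xi$ with $V_{\xi+PD[\lambda]}$: the inequality \eqref{eq:Vs-inequality} only compares $\xi$ with $\xi+PD[\mu]$, and those lie in \emph{different} $\spinc$ structures on $Y'$. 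This is precisely the subtlety the paper flags, and its substitute for the max formula is not monotonicity but the L-space arguments of Lemmas~\ref{lem:proj-qi} and~\ref{lem:proj-qimu} (together with Proposition~\ref{prop:Vs=H-s}, which is proved by identifying conjugate $\spinc$ structures on the cobordism $W'_n$, not by a formal ``conjugation symmetry of the knot Floer complex''): they show the entire cone is quasi-isomorphic to the single arm $\Aplus_{\xi_0}$, respectively $\Aplus_{\xi_0+PD[\mu]}$ when $N_0\ge 2$ — the role of the hypothesis $N_0\ge2$ is to force $V_{\xi_0+PD[\mu]}\ge 1$ so that the cycle argument goes through, not to avoid ``boundary corrections.'' Moreover your specific formula $N_1=\max\{V_1,V_{-1}\}$ is inconsistent with the conclusion you need: since $H_1=V_{-1}$ and \eqref{eq:Vs-inequality} give $V_{-1}\in\{V_0,V_0+1\}$, your $N_1$ would lie in $\{N_0,N_0+1\}$, whereas the correct statement, delivered by the quasi-isomorphism onto $\Aplus_{\xi_0+PD[\mu]}$, is $N_1=V_{\xi_0+PD[\mu]}\in\{N_0,N_0-1\}$.

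On the $k$ odd claim, your route through the $\spin$ cobordism and Lemma~\ref{lem:even-Lspace} is genuinely different from the paper's (which computes $d(Y',G_{Y',K'}(\xi_0+PD[\theta]))=\tfrac14$ directly via Lemma~\ref{lem:theta}) and could plausibly be made to work, but as written it has a gap: you must know that the structure $\mft=G_{Y',K'}(\xi_0)$ from the first part is the self-conjugate structure that does \emph{not} extend to a $\spin$ structure on $W$, so that the other one is the $\spin$ extension with $d=\tfrac14$. You assert this as ``the analogue of Lemma~\ref{lem:spin-3k}\eqref{spin-3k:2},'' but that lemma is proved by evaluating $c_1$ on a capped-off Seifert surface and is specific to null-homologous knots; in the homologically essential setting the identification of which self-conjugate structure extends over $W$ requires its own argument (e.g.\ a reduction to the model knot), which your proposal does not supply.
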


With this, the proof of Proposition~\ref{thm:3k-1} will be similar to the previous two cases.  

\begin{proof}[Proof of Proposition~\ref{thm:3k-1}]
In the case that $n = \pm 2$, we may construct a non-coherent banding from $T(2,3)$ to the Hopf link, as shown in Figure~\ref{fig:bandings-2}, which lifts to a distance one surgery from $L(3,1)$ to $L(2,1) \cong L(-2,1)$.  Therefore, we must rule out the case of $n = \pm (3k-1)$ with $k \geq 2$.

The proof will now be handled in two cases, based on the sign of $n$.  First, we suppose that $+L(3k-1,1)$, with $k \geq 2$, is obtained by a distance one surgery on $L(3,1)$.  By Proposition~\ref{prop:even}, we only need to consider the case that $3k-1$ is odd.  Using \eqref{eq:d(3k-1,3)-1}, we compute 
\[
	d(L(3k-1,1),0) = \frac{3k-2}{4} > \frac{k-2}{4} = d(L(3k-1,3),1).
\]
This contradicts Proposition~\ref{prop:3k-1-surgery}.         

Now, we suppose there is a distance one surgery from $L(3,1)$ to $L(-3k-1,1)$ with $k \geq 2$.  By Lemma~\ref{lem:homology-2}, we may assume that $n$ is even.    We begin with the case of $k = 3$.  Lemma~\ref{lem:homology-1} implies that if $-L(8,1)$ was obtained by a distance one surgery, then the linking form of $-L(8,1)$ must be equivalent to $\frac{3}{8}$.  This is impossible since 5 is not a square mod 8.  Thus, we restrict to the case of $ k \geq 5$ for the rest of the proof.    

Proposition~\ref{prop:3k-1-surgery} and the fact that $d(L(-3k-1,1),0) \neq \frac{1}{4}$ imply that 
\[
	-d(L(3k-1,1),0) = d(L(3k-1,3),1) -2N_0
\]
for some non-negative integer $N_0$.  We compute from \eqref{eq:d(3k-1,3)} and \eqref{eq:d(3k-1,3)-1} that 
\[
N_0 = \frac{k-1}{2}.    
\]

Since we are in the case of $k \geq 5$, we may apply \eqref{eq:3k-1-nonspin}.  Combined with \eqref{eq:d(3k-1,3)-4}, this yields
\[
\frac{1}{4} - \frac{(2j-3k+1)^2}{4(3k-1)} = \frac{3k^2 - 19k + 18}{4(3k-1)} - 2N_1, 
\]
for some $0 < j < 3k-1$.  Equivalently, 
\[
N_1 = \frac{5+j + j^2 - 7k - 3jk + 3k^2}{2(3k-1)}.
\]  
Here $N_1 = \frac{k-1}{2}$ or $\frac{k-3}{2}$.  

In the case of $\frac{k-1}{2}$, we are looking for integral roots of the quadratic equation
\[
f(j) = j^2 + j(1-3k) + (4-3k).  
\]
For $k \geq 5$, there are no roots between 0 and $3k-1$.  For the case of $\frac{k-3}{2}$, we are instead looking for integral roots of the quadratic
\[
f(j) = j^2 + j(1-3k) + (3k+2).
\]  
There are no integral roots in this case for $k \geq 5$.  This completes the proof.
\end{proof}

\section{The mapping cone formula and $d$-invariants}
\label{sec:mappingcone}
In this section, we prove the following two key technical statements which were used above in the proof of Theorem~\ref{thm:main} in the cases of $|n| \equiv \pm 1 \pmod{3}$.  These provide analogues of Proposition~\ref{thm:ni-wu} for certain surgeries on homologically essential knots in $L(3,1)$.    

\begin{proposition}\label{prop:3k-1-surgery-formula}
Let $Y = L(3,1)$ and suppose that $Y'$ is an L-space obtained from a distance one surgery on a knot in $Y$, where $|H_1(Y')| = 3k -1$ with $k \geq 1$.  Then there exists a non-negative integer $N_0$ and a self-conjugate $\spinc$ structure $\mft$ on $Y'$ satisfying
\begin{equation}\label{eq:3k-1-0}
d(Y',\mft) = d(L(3k-1,3),1) - 2N_0. 
\end{equation}
Furthermore, if $N_0 \geq 2$, then there exists an integer $N_1$ satisfying $N_0 \geq N_1 \geq N_0 - 1$ and
\begin{equation}\label{eq:3k-1-1}
d(Y',\mft + PD[\mu])  = d(L(3k-1,3),4) - 2N_1.
\end{equation}
Here, $[\mu]$ represents the class in $H_1(Y')$ induced by the meridian of the knot.

Moreover, if $\tilde{\mft} \neq \mft$ for a self-conjugate $\spinc$ structure $\tilde{\mft}$, then $d(Y',\tilde{\mft}) = \frac{1}{4}$.
\end{proposition}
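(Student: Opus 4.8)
\textbf{Proof proposal for Proposition~\ref{prop:3k-1-surgery-formula}.}

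The plan is to run the Ni-Wu argument through the Ozsv\'ath-Szab\'o rational surgery mapping cone, using the homological bookkeeping already established in Section~\ref{sec:homology}. By Lemma~\ref{lem:homology-1}, since $|H_1(Y')| = 3k-1 \equiv 2 \pmod 3$ the knot $K$ is homologically essential, and we may fix a basis $(m,\ell)$ of $H_1(\partial M)$ with $\ell$ the rational longitude of the exterior $M = L(3,1) - \mathcal{N}(K)$ so that the meridian of $K$ is $3m+\ell$ and $Y' = M\big((3k-1)m + k\ell\big)$, a slope distance one from $3m+\ell$. The first step is to write down the mapping cone $\Xplus$ of \cite{OSRational} computing $HF^+(Y')$ together with its $\spinc$ decomposition, for this rationally null-homologous knot and this filling slope. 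It is assembled from ``$A$-type'' pieces $\Aplus_s$ (the large-surgery complexes of the knot Floer complex of $(L(3,1),K)$) and ``$B$-type'' pieces $\Bplus_s$, glued by maps $\vplus_s$ and $\hplus_s$. The crucial observation is that the $\Bplus_s$ do not see $K$: they compute $HF^+$ of the manifold obtained by the \emph{same} filling of a core of a genus-one Heegaard splitting of $L(3,1)$, and identifying slopes (using $3k \equiv 1 \pmod{3k-1}$, so $L(3k-1,k) \cong L(3k-1,3)$) shows this manifold is $L(3k-1,3)$. This is why the correction terms $d(L(3k-1,3),1)$ and $d(L(3k-1,3),4)$ enter the statement, via \eqref{eq:d-lens}; note that by \eqref{eq:spin-formula}, $1$ is a self-conjugate label on $L(3k-1,3)$.

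Next I would use the hypothesis that both $L(3,1)$ and $Y'$ are L-spaces. This forces $\Xplus$ into the rigid shape familiar from Ni-Wu: $\vplus_s$ and $\hplus_s$ are modeled on multiplication by $U^{V_s}$ and $U^{H_s}$ for non-negative integers $V_s, H_s$, with monotonicity $V_s \ge V_{s+1}\ge V_s -1$ and the conjugation symmetry $V_s = H_{-s}$ --- the analogue in this setting of Property~\ref{thm:rasmussen-localh} and of Rasmussen's Proposition~7.6, which I would derive from the structure and symmetry of the knot Floer complex of $(L(3,1),K)$. Truncating the mapping cone appropriately (the relevant orientation being the positive one, consistent with $W$ positive-definite by Lemma~\ref{lem:cobordism}\eqref{cob:definite}) and reading off the bottom grading in each $\spinc$ class produces a Ni-Wu-type formula: in the $\spinc$ structure labeled $i$, $d(Y',\cdot)$ equals the correction term of $L(3k-1,3)$ in the matching label minus $2\max(V_*,H_*)$ for indices $*$ determined by $i$. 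For the self-conjugate structure $\mft$ the symmetry gives $V_* = H_*$, so $N_0 := V_*$ is a genuine non-negative integer and \eqref{eq:3k-1-0} holds; the structure $\mft + PD[\mu]$ is the shift of $\mft$ by the meridian class, which moves the $L(3k-1,3)$-label from $1$ to $4$, and the monotonicity of $V_s$ yields an $N_1$ with $N_0 \ge N_1 \ge N_0-1$ satisfying \eqref{eq:3k-1-1}, the hypothesis $N_0 \ge 2$ being what guarantees that the governing grading comes from an $\Aplus$-piece rather than a truncation artifact and that $\mft'$ is unambiguous.

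Finally, the ``moreover'' clause concerns the second self-conjugate $\spinc$ structure $\tilde\mft$, which exists precisely when $k$ is odd, i.e.\ $3k-1$ is even. In that case Lemma~\ref{lem:cobordism} shows the two-handle cobordism $W\colon L(3,1) \to Y'$ is $\spin$ with $b_2^+(W)=1$ and $b_2^-(W)=0$, so Lemma~\ref{lem:even-Lspace} gives $d(Y',\tilde\mft) - d(L(3,1),0) = -\tfrac14$, hence $d(Y',\tilde\mft) = \tfrac14$; and an obstruction computation in the style of Lemma~\ref{lem:spin-3k}\eqref{spin-3k:2} --- comparing which self-conjugate structure extends over $W$ against the one produced by the mapping cone --- identifies this $\tilde\mft$ as the self-conjugate structure distinct from $\mft$. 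The main difficulty throughout is bookkeeping: matching the $\Z/(3k-1)$-labels on $\spinc(Y')$ with the $\spinc$ structures on $W$ (via first-Chern-class formulas of \cite{OSInteger} type, adapted to the rational setting), with the labeling of $\spinc(L(3k-1,3))$ from \eqref{eq:d-lens}, and with the conjugation action, so that the self-conjugate structures land on the claimed labels and the shift $\mft \mapsto \mft+PD[\mu]$ is correctly tracked; the truncation of $\Xplus$ must also be justified in the needed range.
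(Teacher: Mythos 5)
Your treatment of \eqref{eq:3k-1-0} and \eqref{eq:3k-1-1} follows essentially the same route as the paper (mapping cone for the essential knot, monotonicity \eqref{eq:Vs-inequality}, the symmetry of Proposition~\ref{prop:Vs=H-s}, and normalization against the core of the genus-one Heegaard splitting, whose distance one filling is $L(3k-1,3)$ with labels $1$ and $4$), but two steps are stated incorrectly or glossed at exactly the points where the work lies. First, the $\Bplus_\xi$ pieces are copies of $CF^+(L(3,1))$ in the various $\spinc$ structures; they do not compute $HF^+(L(3k-1,3))$. The lens space $L(3k-1,3)$ enters only through the absolute grading shift on the mapping cone, which depends solely on homological data and is therefore pinned down by the model knot $K_U$ (for which the surgery \emph{is} $L(3k-1,3)$ and $V_{\xi_0}=0$); this is Lemma~\ref{lem:spin-surg} plus the grading-shift computation, not a statement about the $B$-column. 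Second, ``reading off the bottom grading'' is precisely the non-routine step here: $\xi$ and $\xi+PD[\lambda]$ do not differ by a multiple of $PD[\mu]$, so one cannot compare $V_\xi$ and $V_{\xi+PD[\lambda]}$ and the $S^3$ Ni--Wu localization does not carry over verbatim. The paper needs the L-space hypothesis (via $H_*(\Ahat_\xi)\cong\FF$ and a parity/Euler-characteristic splitting over $\FF$, combined with Proposition~\ref{prop:Vs=H-s}) to prove that the projections $\Piplus_{\xi_0}$ and, when $V_{\xi_0}\ge 2$, $\Piplus_{\xi_0+PD[\mu]}$ are quasi-isomorphisms (Lemmas~\ref{lem:proj-qi} and \ref{lem:proj-qimu}). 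Note also that a blanket ``$d=$ lens term $-2\max(V,H)$'' formula would not yield $N_0\ge N_1\ge N_0-1$: at $\xi_0+PD[\mu]$ one has $H_{\xi_0+PD[\mu]}=V_{\xi_0-PD[\mu]}$, which can equal $N_0+1$; the content of Lemma~\ref{lem:proj-qimu} is that under $N_0\ge 2$ the relevant integer is $V_{\xi_0+PD[\mu]}$ itself. Finally, the matching of $\mft+PD[\mu]$ with the label $4$ is itself a computation (the paper uses the Cochran--Horn description of $\spinc$ differences on $L(3k-1,3)$ together with $[\mu]=3[m]$), not an automatic consequence of the shift.

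For the ``moreover'' clause you take a genuinely different route: apply Lemma~\ref{lem:even-Lspace} to the positive-definite $\spin$ cobordism $W$ (Lemma~\ref{lem:cobordism}) to get a self-conjugate $\spinc$ structure on $Y'$ with $d$-invariant $\tfrac14$, rather than computing $d\bigl(Y',G_{Y',K'}(\xi_0+PD[\theta])\bigr)$ through the mapping cone as the paper does in Lemma~\ref{lem:theta}. This is an attractive shortcut, but as written it has a gap: it shows only that the self-conjugate structure which \emph{extends over $W$} has $d=\tfrac14$, and the desired conclusion concerns the structure $\tilde{\mft}\neq\mft$. You must therefore rule out that the extending structure is $\mft=G_{Y',K'}(\xi_0)$, and when $d(Y',\mft)$ happens to equal $\tfrac14$ the $d$-invariants cannot make this distinction for you. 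The identification you invoke ``in the style of Lemma~\ref{lem:spin-3k}\eqref{spin-3k:2}'' is proved in the paper only for null-homologous knots, using the capped-off Seifert surface and \eqref{eq:3k-cobordism-spinc}; for a homologically essential (merely rationally null-homologous) knot there is no such surface, and supplying the analogous statement is essentially the same $\spinc$ bookkeeping the paper performs via $\theta$, Lemma~\ref{lem:theta}, and the model knot. So either carry out that identification in the rational setting or fall back on the paper's direct computation at $\xi_0+PD[\theta]$.
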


\begin{proposition}\label{prop:3k+1-surgery-formula}
Let $Y = L(3,1)$ and suppose that $Y'$ is an L-space obtained from a distance one surgery on a knot in $Y$, where $|H_1(Y')| = 3k +1$ with $k \geq 0$.  Then there exists a non-negative integer $N_0$ and a self-conjugate $\spinc$ structure $\mft$ on $Y'$ satisfying
\begin{equation}\label{eq:3k+1-0}
d(Y',\mft) + d(L(3k+1,3),1) = 2N_0. 
\end{equation}
Furthermore, if $N_0 \geq 2$, then there exists an integer $N_1$ satisfying $N_0 \geq N_1 \geq N_0 - 1$ and
\begin{equation}\label{eq:3k+1-1}
d(Y',\mft + PD[\mu])  + d(L(3k+1,3),4) = 2N_1.
\end{equation}
Here, $[\mu]$ represents the class in $H_1(Y')$ induced by the meridian of the knot.

Moreover, if $\tilde{\mft} \neq \mft$ for a self-conjugate $\spinc$ structure $\tilde{\mft}$, then $d(Y',\tilde{\mft}) = \frac{3}{4}$.
\end{proposition}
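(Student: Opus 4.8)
The plan is to run the rational surgery mapping cone formula of \cite{OSRational} and to compare its output to a ``model'' computation in which $K$ is replaced by a core of a genus one Heegaard splitting. First, by Lemma~\ref{lem:homology-1}, $K$ is homologically essential, its meridian on the exterior $M$ is $3m+\ell$, and $Y'$ is the distance one filling $M((3k+1)m+k\ell)$. As set up in Section~\ref{subsec:homology}, filling $M$ along $m$ gives an integer homology sphere $Z$ whose core is a null-homologous knot $K'$ with $L(3,1)=Z_3(K')$ and $Y'=Z_{(3k+1)/k}(K')$, so a single knot Floer complex governs both surgeries. When $K$ is a core, $M$ is a solid torus and the same kind of elementary computation used in Section~\ref{subsec:homology} identifies $Y'$ with the lens space $L(3k+1,k)$; since $k^{-1}\equiv-3\pmod{3k+1}$, this is $-L(3k+1,3)$. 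Using \eqref{eq:d-lens} one records $d(-L(3k+1,3),\cdot)$ at its self-conjugate $\spinc$ structures and at the one obtained from the self-conjugate structure by adding $PD[\mu]$; these come out to $-d(L(3k+1,3),1)$, $-d(L(3k+1,3),4)$, and $\tfrac34$ when $k$ is odd, which are exactly the values that must appear on the right of \eqref{eq:3k+1-0}--\eqref{eq:3k+1-1} once all the relevant correction terms vanish.

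Next I would invoke the rational surgery formula of \cite{OSRational} for $K'\subset Z$, presenting $HF^+(Y')$ as the homology of a truncation of $\mathrm{Cone}\bigl(\bigoplus_s A^+_s \xrightarrow{\{v_s,\,h_s\}} \bigoplus_s B^+\bigr)$ with $B^+\simeq CF^+(Z)$ and $A^+_s$ the large surgery complexes. Because $Y'$ is an L-space, the homology of this cone has minimal possible rank, and one uses this to deduce that $Z$ is an L-space integer homology sphere (so $HF^+_{\mathrm{red}}(Z)=0$, $d(Z)=0$, $B^+\simeq\mathcal T^+$), that each $A^+_s\simeq\mathcal T^+$ up to a grading shift recorded by a nonnegative integer $V_s$ with $V_s=0$ for $s\gg0$, and that $v_s$ and $h_s$ are homotopic to $U^{V_s}$ and $U^{H_s}$ with $H_s=V_{-s}$. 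Since $K'$ is a null-homologous knot in a rational homology sphere, Property~\ref{thm:rasmussen-localh} applies directly, giving $V_s\geq V_{s+1}\geq V_s-1$.

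Now decompose the cone over $\spinc(Y')\cong\Z/(3k+1)$; each summand is a finite zig-zag $\cdots\to A^+\xrightarrow{v}B^+\xleftarrow{h}A^+\to\cdots$ whose homology and minimal grading are read off from the relevant $V_s$ exactly as in \cite{NiWu} and \cite{OSRational}. Comparing the grading shifts with the model case of the first paragraph (all $V_s=0$) yields, for the self-conjugate $\spinc$ structure $\mft$ singled out by the formula, an identity $d(Y',\mft)+d(L(3k+1,3),1)=2V_{s_0}=:2N_0$; and for $\mft+PD[\mu]$ --- which corresponds to translating by the class $[\mu]=3[m]$ in $H_1(Y')$, carrying the structure labeled $1$ to the one labeled $4$ --- the identity $d(Y',\mft+PD[\mu])+d(L(3k+1,3),4)=2N_1$ with $N_1$ an adjacent $V$-value (or a maximum of two adjacent ones), whence $N_0\geq N_1\geq N_0-1$ by Property~\ref{thm:rasmussen-localh}; the hypothesis $N_0\geq2$ is exactly what guarantees that the relevant $A^+$-piece survives in the truncation and contributes as expected. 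For the last clause, when $3k+1$ is even there is a second self-conjugate $\spinc$ structure $\tilde\mft\neq\mft$; tracking the $\spinc$ identification shows, as in Lemma~\ref{lem:spin-3k}, that $\mft$ is the self-conjugate structure not extending to a spin structure over the two-handle cobordism $W$, so $\tilde\mft$ is the one that does, and since $W$ is spin by Lemma~\ref{lem:cobordism}, applying Lemma~\ref{lem:even-Lspace} to $-W$ gives $d(Y',\tilde\mft)=\tfrac34$, exactly as in the proof of Proposition~\ref{prop:even}.

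The main obstacle is the third step: pushing the $\spinc$-structure and rational-grading bookkeeping through the mapping cone precisely enough that the \emph{known} lens space $L(3k+1,3)$, with its particular enumeration of $\spinc$ structures from Theorem~\ref{thm:d-lens}, is what appears on the right-hand sides, and checking that the truncation bound built into the statement is precisely ``$N_0\geq2$''. A secondary technical point, needed before any of this, is to argue that a positive L-space surgery forces $Z$ --- equivalently, the large surgeries on $K'$ --- to be an L-space, so that the pieces $A^+_s$ and $B^+$ of the mapping cone are honest towers $\mathcal T^+$.
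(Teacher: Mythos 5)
Your route is genuinely different from the paper's, and it has a gap at its foundation. The paper never refills the exterior along $m$: it keeps the homologically essential knot $K$ inside $\pm L(3,1)$, reverses orientation so that the two-handle cobordism becomes positive-definite (this is needed because for $|H_1(Y')|=3k+1$ the truncated cone has one more $B$ than $A$), and then reruns Sections~\ref{sec:mc-preliminaries}--\ref{sec:mc-truncation} with the model computation producing $L(3k+1,3)$. In that mapping cone the $\Bplus_\xi$ are $CF^+(L(3,1))$, hence towers for free, and the $\Aplus_\xi$ are towers by \cite[Lemma 6.7]{BBCW}, whose hypothesis --- a distance one surgery between two L-spaces --- is exactly what the proposition provides. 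By contrast, your cone for $K'\subset Z$ with coefficient $(3k+1)/k$ has $\Bplus\simeq CF^+(Z)$, so your argument needs $Z=M(m)$ to be an L-space, and in fact needs the stronger statement that all large surgeries on $K'$ are L-spaces, i.e.\ $H_*(\Aplus_s)\cong\T^+$ for all $s$ (so your parenthetical ``equivalently'' is also not right: large surgeries being L-spaces implies $Z$ is, not conversely). You flag this as a secondary point and assert that a positive L-space surgery forces $Z$ to be an L-space, but that implication is false. The hypotheses only give two distance-one L-space fillings of $M$ (namely $L(3,1)$ and $Y'$); the set of L-space filling slopes is an interval, and nothing forces that interval to contain the slope $m$. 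Concretely, the core of $(-1)$-surgery on the trefoil is a knot in the non-L-space homology sphere $\pm\Sigma(2,3,7)$ whose exterior (the trefoil exterior) carries a whole interval of L-space fillings, including distance-one pairs of L-space (even lens space) fillings; so the existence of such pairs can never certify that the $m$-filling is an L-space, and no cited result (in particular not \cite[Lemma 6.7]{BBCW}, which would require the ambient manifold $Z$ to already be an L-space) supplies the tower structure your computation rests on. Closing this hole would require a genuinely new argument special to your situation, and it is not clear one exists for an arbitrary L-space $Y'$ as in the statement.

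The rest of your outline is broadly parallel to the paper once the towers are granted: the model identification $Y'\cong -L(3k+1,3)$ for the core knot, the comparison of grading shifts with the model (the paper does this via $\sigma(\xi)$ depending only on homological data, together with the model knot $K_U$ and \cite{CochranHorn} to see that adding $PD[\mu]=3PD[m]$ carries the label $1$ to the label $4$), Property~\ref{thm:rasmussen-localh} for $N_0\geq N_1\geq N_0-1$, and the role of $N_0\geq 2$ in guaranteeing the projection to the relevant $\Aplus$-summand is a quasi-isomorphism. Your endgame for the ``moreover'' clause --- applying Lemma~\ref{lem:even-Lspace} to the orientation-reversed spin cobordism as in Proposition~\ref{prop:even} to get $d(Y',\tilde\mft)=\tfrac34$ --- is a legitimate alternative to the paper's Lemma~\ref{lem:theta}-style computation, provided you pin down which self-conjugate structure extends over $W$; but none of this repairs the missing L-space input above.
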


\begin{remark}
We expect that the conclusions of these two propositions hold independently of $Y'$ being an L-space and the value of $N_0$.    
\end{remark}

The general argument for the above propositions is now standard and is well-known to experts.  The strategy is to study the $d$-invariants using the mapping cone formula for rationally null-homologous knots due to Ozsv\'ath-Szab\'o \cite{OSRational}.  In Section~\ref{subsec:surgery-review}, we review the mapping cone formula.  In Sections~\ref{sec:mc-preliminaries}, ~\ref{sec:mc-spinc} and ~\ref{sec:mc-truncation} we establish certain technical results about the mapping cone formula analogous to properties well-known for knots in $S^3$.  Finally, in Section~\ref{sec:mc-proof}, we prove Propositions~\ref{prop:3k-1-surgery-formula} and \ref{prop:3k+1-surgery-formula}.

\subsection{The mapping cone for rationally nullhomologous knots}\label{subsec:surgery-review}
In this subsection, we review the mapping cone formula from \cite{OSRational}, which will allow us to compute the Heegaard Floer homology of distance one surgeries on knots in a rational homology sphere.  We assume the reader is familiar with the knot Floer complex for knots in $S^3$; we will use standard notation from that realm.  For simplicity, we work in the setting of a rational homology sphere $Y$.  (As a warning, $Y$ will be $-L(3,1)$ when proving Proposition~\ref{prop:3k+1-surgery-formula}.) All Heegaard Floer homology computations will be done with coefficients in $\FF = \Z/2$.  As mentioned previously, singular homology groups are assumed to have coefficients in $\Z$, unless otherwise noted.

Choose an oriented knot $K \subset Y$ with meridian $\mu$ and a framing curve $\lambda$, i.e. a slope $\lambda$ on the boundary of a tubular neighborhood of $K$ which intersects the meridian $\mu$ once transversely.  Here, $\lambda$ naturally inherits an orientation from $K$.  %(Note that an orientation of $\mu$ is determined by the orientations on $K$ and $Y$.)  
Let $Y'$ denote the result of $\lambda$-surgery.  %From this perspective of the exterior of $K$, denoted $M$, $Y$ and $Y'$ are obtained by a pair of distance one fillings.  

We write $\relspinc(Y,K)$ for the relative $\spinc$ structures on $(M,\partial M)$, which has an affine identification with $H^2(Y,K) = H^2(M,\partial M)$.  Here, $M = Y -\mathcal{N}(K)$. If $K$ generates $H_1(Y)$, then $\relspinc(Y,K)$ is affinely isomorphic to $\Z$.  
In our applications, this will be the case.

There exist maps $G_{Y,\pm K}: \relspinc(Y,K) \to \spinc(Y)$ satisfying 
\begin{equation}\label{eq:G-equivariance}
G_{Y,\pm K}(\xi + \kappa) = G_{Y, \pm K}(\xi) + i^* \kappa,
\end{equation} 
where $\kappa \in H^2(Y,K)$ and $i: (Y, pt) \to (Y,K)$ is inclusion.  Here, $-K$ denotes $K$ with the opposite orientation.  We have  
\[
G_{Y,-K}(\xi) = G_{Y,K}(\xi) + PD[\lambda].
\]
If $Y' = Y_\lambda(K)$ is obtained by surgery on $K$, we will write $K'$ or $K_\lambda$ for the core of surgery.  

Associated to $\xi \in \relspinc(Y,K)$ is the $\Z \oplus \Z$-filtered knot Floer complex $C_\xi  = CFK^\infty(Y,K,\xi)$.  Here, the bifiltration is written (\emph{algebraic}, \emph{Alexander}).  %For comparison with the more familiar case of $S^3$, we can identify $\relspinc(S^3,K)$ with $\Z$ by $c_1(\xi) = 2s PD[\mu]$.  \textcolor{red}{This is what they write, but I don't like it as much because this is $c_1$ of a relative $\spinc$ structure and I don't understand what that means.}  In this case, $C_n$ corresponds to shifting the Alexander filtration on the usual knot Floer complex by $n$.  In fact, more generally, we have $C_{\xi + PD[\mu]} = C_\xi[(0,-1)]$, i.e., we lower the Alexander-filtration of $C_\xi$ by one.
We have $C_{\xi + PD[\mu]} = C_\xi[(0,-1)]$, i.e. we shift the Alexander filtration on $C_\xi$ by one.  Note that not every relative $\spinc$ structure is necessarily related by a multiple of $PD[\mu]$, so we are not able to use this to directly compare the knot Floer complexes for an arbitrary pair of relative $\spinc$ structures.  

For each $\xi \in \relspinc(Y,K)$, we define the complexes $\Aplus_\xi = C_\xi\{\max\{i,j\} \geq 0\}$ and $\Bplus_\xi = C_\xi\{i \geq 0\}$.  The complex $\Bplus_\xi$ is simply $CF^+(Y,G_{Y,K}(\xi))$, while $\Aplus_\xi$ represents the Heegaard Floer homology of a large surgery on $K$ in a certain $\spinc$ structure, described in slightly more detail below.  

The complexes $\Aplus_\xi$ and $\Bplus_\xi$ are related by grading homogenous maps 
$$
\vplus_\xi: \Aplus_\xi \to \Bplus_\xi, \ \ \hplus_\xi : \Aplus_\xi \to \Bplus_{\xi + PD[\lambda]}.
$$
Rather than defining these maps explicitly, we explain how these can be identified with certain cobordism maps as follows.  Fix $n \gg 0$ and consider the three-manifold $Y_{n\mu+\lambda}(K)$ and the induced cobordism from $Y_{n\mu+\lambda}(K)$ to $Y$ obtained by attaching a two-handle to $Y$, reversing orientation, and turning the cobordism upside down.  We call this cobordism $W'_n$, which is negative-definite.  Fix a generator $[F] \in H_2(W'_n,Y)$ such that $PD[F]|_Y = PD[K]$.  Equip $Y_{n\mu+\lambda}(K)$ with a $\spinc$ structure $\mft$.  It is shown in \cite[Theorem 4.1]{OSRational} that there exist two particular $\spinc$ structures $\mfv$ and $\mfh= \mfv + PD[F]$ on $W'_n$ which extend $\mft$ over $W'_n$ and an association $\Xi:\spinc(Y_{n\mu+\lambda}(K)) \to \relspinc(Y,K)$ satisfying commutative squares:
\begin{equation}\label{eq:large-surg}
\xymatrix{
 CF^+(Y_{n\mu+\lambda}(K),\mft)  \ar[d]^{f_{W'_n,\mfv}}  \ar[r]^-\simeq  & \Aplus_\xi  \ar[d]^{\vplus_\xi} & CF^+(Y_{n\mu + \lambda}(K),\mft)   \ar[d]^{f_{W'_n,\mfh}}  \ar[r]^-\simeq &  \Aplus_\xi \ar[d]^{\hplus_\xi} \\
 CF^+(Y,G_{Y,K}(\xi))   \ar[r]^-\simeq  &\Bplus_\xi  & CF^+(Y,G_{Y,-K}(\xi))  \ar[r]^-\simeq & \Bplus_{\xi + PD[\lambda]},
}
\end{equation}
where $\xi = \Xi(\mft)$.  Here, $f_{W'_n,\mfs}$ denotes the $\spinc$ cobordism map in Heegaard Floer homology, as defined in \cite{OSSmoothFour}.

More generally, there exists a map $E_{K,n,\lambda} : \spinc(W'_n) \to \relspinc(Y,K)$ such that if $\mfv$ and $\mfh$ are as above, then 
\begin{equation}\label{eq:E-PDS}
E_{K,n,\lambda}(\mfv) = \xi, \ E_{K,n,\lambda}(\mfh) = \xi + n PD[\mu] + PD[\lambda].  
\end{equation}
To make the notation more suggestive, we will write $\mfv_\xi$ and $\mfh_\xi$ for the associated $\spinc$ structures on $W'_n$ appearing in \eqref{eq:large-surg}.

Recall that for any $\spinc$ rational homology sphere, the Heegaard Floer homology contains a distinguished submodule isomorphic to $\T^+ = \FF[U,U^{-1}]/U \cdot \FF[U]$, called the {\em tower}.  Since $W'_n$ is negative-definite, on the level of homology, $\vplus_\xi$ induces a grading homogeneous non-zero map between the towers, which is necessarily multiplication by $U^N$ for some integer $N \geq 0$.  We denote this integer by $V_\xi$.  The integer $H_{\xi}$ is defined similarly.   These numbers $V_\xi$ are also known as the local $h$-invariants, originally due to Rasmussen \cite{RasmussenThesis}.  A direct analogue of \cite[Proposition 7.6]{RasmussenThesis} (Property~\ref{thm:rasmussen-localh} above), using $C_{\xi + PD[\mu]} = C_{\xi}[(0,-1)]$, shows that for each $\xi \in \relspinc(Y,K)$, 
\begin{equation}\label{eq:Vs-inequality}
V_\xi \geq V_{\xi + PD[\mu]} \geq V_\xi - 1.
\end{equation}  

We are now ready to define the mapping cone formula.  Define the map 
\begin{equation}\label{eq:mapping-cone}
\Phi: \bigoplus_\xi \Aplus_\xi \to \bigoplus_\xi \Bplus_\xi, \ (\xi,a) \mapsto (\xi, \vplus_\xi(a)) + (\xi + PD[\lambda], \hplus_\xi(a)),
\end{equation}
where the first component of $(\xi, a)$ simply indicates the summand in which the element lives.  Notice that the mapping cone of $\Phi$ splits over equivalence classes of relative $\spinc$ structures, where two relative $\spinc$ structures are equivalent if they differ by an integral multiple of $PD[\lambda]$.  We let the summand of the cone of $\Phi$ corresponding to the equivalence class of $\xi$ be written $\Xplus_\xi$.  Ozsv\'ath and Szab\'o show that there exist grading shifts on the complexes $\Aplus_\xi$ and $\Bplus_\xi$ such that $\Xplus_\xi$ can be given a consistent relative $\Z$-grading \cite{OSRational}.  In fact, these shifts can be done to $\Xplus_\xi$ with an absolute $\mathbb{Q}$-grading.  While we do not describe the grading shifts explicitly at the present moment, it is important to point out that these shifts only depend on the homology class of the knot.  With this, we are ready to state the connection between the mapping cone formula and surgeries on $K$.  

\begin{theorem}[Ozsv\'ath-Szab\'o, \cite{OSRational}]\label{thm:mappingcone}
Let $\xi \in \relspinc(Y,K)$.  Then there exists a quasi-isomorphism of absolutely-graded $\FF[U]$-modules, 
\begin{equation}\label{eq:mappingcone}
\Xplus_\xi \simeq CF^+(Y_\lambda(K), G_{Y_\lambda(K),K_\lambda}(\xi)). \\
\end{equation}
\end{theorem}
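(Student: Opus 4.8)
The plan is to bootstrap from the large surgery formula, which is already available to us as \cite[Theorem 4.1]{OSRational} and recorded above: for $n\gg 0$ and $\mft\in\spinc(Y_{n\mu+\lambda}(K))$ it identifies $CF^+(Y_{n\mu+\lambda}(K),\mft)$ with $\Aplus_\xi$ (where $\xi=\Xi(\mft)$) and, crucially, matches the cobordism maps $f_{W'_n,\mfv_\xi}$ and $f_{W'_n,\mfh_\xi}$ with $\vplus_\xi$ and $\hplus_\xi$. This is the only genuinely geometric input; the rest is an exact-triangle induction together with a truncation argument, exactly as in the proofs of the integer and rational surgery formulas for knots in $S^3$.

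First I would set up the surgery exact triangle for a triple of slopes on $\partial\mathcal{N}(K)$ that pairwise meet once — namely $\mu$, $n\mu+\lambda$, and $(n+1)\mu+\lambda$ — so that Dehn filling along $\mu$ returns $Y$ while the other two slopes give $Y_\lambda(K)$ (after handleslides) and a larger surgery. Assembling these triangles along the one-parameter family $\{n\mu+\lambda : n\ge 0\}$ presents $CF^+(Y_\lambda(K))$ as an iterated mapping cone whose vertices are copies of $CF^+$ of large surgeries (the $\Aplus_\xi$) and of $CF^+(Y)=\Bplus_\xi$, and whose edges are the triangle connecting maps.

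Next I would truncate. For $\xi$ sufficiently far out in either direction, $\vplus_\xi$ (respectively $\hplus_\xi$) becomes a quasi-isomorphism — immediate from $C_{\xi+PD[\mu]}=C_\xi[(0,-1)]$, \eqref{eq:Vs-inequality}, and the structure of $CFK^\infty$ — so $\Xplus_\xi$ is quasi-isomorphic to a finite subquotient complex $\Xplus_\xi(b)$ for $b\gg 0$. One then proves $\Xplus_\xi(b)\simeq CF^+(Y_\lambda(K),G_{Y_\lambda(K),K_\lambda}(\xi))$ by resolving $\Xplus_\xi(b)$ one $\Aplus_\xi$--$\Bplus_\xi$ pair at a time, each step being an instance of the surgery exact triangle whose connecting homomorphism is identified with $\vplus_\xi$ or $\hplus_\xi$ via \cite[Theorem 4.1]{OSRational} and the composition law for the maps $f_{W,\mfs}$; the base case is the large surgery formula, where the cone collapses to a single $\Aplus_\xi$. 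In parallel one checks the $\spinc$ refinement: the maps $\Xi$, $E_{K,n,\lambda}$ (see \eqref{eq:E-PDS}), $G_{Y,\pm K}$ and $G_{Y_\lambda(K),K_\lambda}$ must be shown compatible so that the iterated cone splits over $\spinc(Y_\lambda(K))$ as asserted, and the absolute $\mathbb{Q}$-grading is pinned down by propagating the known grading shift $\tfrac14\bigl(c_1(\mfs)^2-2\chi(W'_n)-3\sigma(W'_n)\bigr)$ of $f_{W'_n,\mfv}$ and $f_{W'_n,\mfh}$ through the triangles.

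I expect the main obstacle to be the geometric identification of the connecting homomorphism of the surgery exact triangle with $\vplus_\xi\oplus\hplus_\xi$ on the nose — with the correct $\spinc$ decomposition and no spurious diagonal terms. This requires a Heegaard triple-diagram subordinate to the three slopes and a neck-stretching (large-$n$) degeneration, so that the relevant holomorphic triangle count reduces to the two cobordism maps of \cite[Theorem 4.1]{OSRational}, with higher-area or higher-polygon contributions either vanishing for area reasons or being absorbed by a change of basis. The features special to the rationally null-homologous setting — that the bifiltration on $CFK^\infty(Y,K,\xi)$ is only relative, that translation by $PD[\mu]$ is the only comparison available among the $C_\xi$, and that the grading shifts depend on $[K]\in H_1(Y)$ — do not change the skeleton of the argument, but they force one to carry the bookkeeping of $\relspinc(Y,K)$, the maps $G_{Y,\pm K}$, and their equivariance \eqref{eq:G-equivariance} at every stage.
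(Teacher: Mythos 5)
You should first note that the paper does not prove this statement at all: it is quoted as Ozsv\'ath--Szab\'o's rational surgery formula, with \cite{OSRational} standing in for the proof. So there is no in-paper argument to compare against; what you have written is an outline of the original proof, and at the level of strategy it is faithful to it (and to the integer surgery formula of \cite{OSInteger} on which it is modeled): the large surgery theorem identifying $CF^+(Y_{n\mu+\lambda}(K),\mft)$ with $\Aplus_\xi$ and the cobordism maps with $\vplus_\xi,\hplus_\xi$, an exact-triangle assembly, a truncation step, and the $\relspinc$/absolute-grading bookkeeping.

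As a proof, however, the proposal has genuine gaps exactly where the real content of \cite{OSRational} lies. First, the triangles for the triples $(\mu,\,n\mu+\lambda,\,(n+1)\mu+\lambda)$ with $n\geq 1$ never contain $Y_\lambda(K)$ as a vertex, so ``assembling these triangles along the family $\{n\mu+\lambda\}$'' does not by itself present $CF^+(Y_\lambda(K))$ as a cone: iterating exact triangles only yields a spectral sequence unless you control the higher compositions and the specific null-homotopies, and in the actual argument this is done via the refined (twisted-coefficient) surgery exact sequence, in which $CF^+(Y)$ appears as a direct sum of copies indexed by the relevant $\spinc$ data, together with a homological-algebra truncation lemma showing that the cone of the identified map computes the middle term. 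Second, the identification of the resulting edge maps with $\vplus_\xi$ and $\hplus_\xi$ on the nose, with the correct $\relspinc$ labels and no diagonal terms, and the pinning of the absolute $\mathbb{Q}$-grading, are precisely the holomorphic-polygon and composition-law arguments you defer as ``the main obstacle.'' Flagging them is appropriate, but without them the proposal is a correct roadmap of the Ozsv\'ath--Szab\'o proof rather than a complete argument; since the paper treats this as a citation, the honest conclusion is that your sketch reconstructs the intended external proof in outline only.
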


Finally, we remark that the entire story above has an analogue for the hat flavor of Heegaard Floer homology.  We denote the objects in the hat flavor by $\Ahat_\xi, \Xhat_\xi, \vhat_\xi$, etc.  The analogue of \eqref{eq:mappingcone} is then a quasi-isomorphism
\begin{equation}\label{eq:mappingcone-hat}
\Xhat_\xi \simeq \widehat{CF}( Y_\lambda(K), G_{Y_\lambda(K),K_\lambda}(\xi)).
\end{equation}

\subsection{Preliminaries specific to knots in $L(3,1)$}\label{sec:mc-preliminaries}
Through Sections~\ref{sec:mc-preliminaries}-\ref{sec:mc-truncation}, $K$ will denote a homologically essential knot in $Y = L(3,1)$ and $\lambda$ will denote a framing such that $Y' = Y_\lambda(K)$ is an L-space with $|H_1(Y')| = 3k-1$ for some $k > 0$.  The case of $|H_1(Y')| = 3k+1$ is dealt with similarly, and the necessary changes are described in Section~\ref{sec:mc-proof} below.  Recall that we give $\lambda$ the orientation induced by $K$. % A key example is provided as follows.  Let $L^J = L_1 \cup L_2$ denote the Hopf link connect sum with a knot $J \subset S^3$ at $L_1$.  Consider $Y$ as $-3/2$-surgery on $L_2$ and $K$ the image of $L_1$ under the surgery.  In this case, $\lambda$ is represented by the framing $k-1$ on $L_1$, and by a slam-dunk move, we see that the resulting manifold is $\frac{3k-1}{3}$ surgery on $J$.  This will be the reason the $d$-invariants of $L(3k-1,3)$ will show up in Proposition~\ref{prop:3k-1-surgery-formula}.  For notation, we will write this special knot in $L(3,1)$ as $K_J$, as $K_U$ will be an important knot to understand later on.  Note that $K_U$ is a core of the genus one Heegaard splitting of $L(3,1)$.     

The mapping cone formula for any homologically essential knot in $L(3,1)$ is easier to describe than in generality.  We have that $\relspinc(Y,K) \cong \Z$.  Write $[m]$ for the generator of $H_1(M)$ such that $[\mu] = 3[m]$ (instead of $-3[m]$).  Consequently, since $[\mu] \cdot [\lambda] = 1$, we have that $[\lambda] = (3k-1) [m]$ by Lemma~\ref{lem:homology-1}\eqref{homology-1-3}.  Therefore, for fixed $\xi \in \relspinc(Y,K)$, we see that the mapping cone $\Xplus_\xi$ consists of the $A_{\xi'}$ and $B_{\xi'}$ where $\xi' - \xi = (3k-1)j \cdot PD[m]$ for some $j \in \Z$.  For a more pictorial representation, see Figure~\ref{fig:mapping-cone-picture} for the case of $k = 2$.  %To keep notation simple, we will often omit $PD$ when it is clear where the elements should be living.  

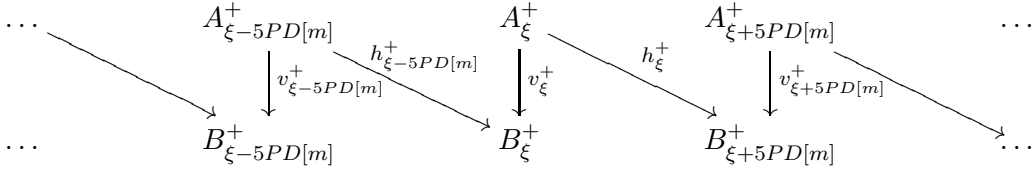
\begin{figure}
\[
\xymatrix{
\ldots \ar[drr] & &  \Aplus_{\xi - 5PD[m]} \ar[d]^{\vplus_{\xi - 5PD[m]}}  \ar[drr]^{\hplus_{\xi - 5PD[m]}} & & \Aplus_\xi \ar[d]^{\vplus_\xi} \ar[drr]^{\hplus_\xi} & &\Aplus_{\xi + 5 PD[m]} \ar[d]^{\vplus_{\xi + 5PD[m]}} \ar[drr] & & \ldots \\
\ldots & & \Bplus_{\xi - 5PD[m]} & & \Bplus_\xi & & \Bplus_{\xi + 5PD[m]} & & \ldots 
}
\]
\caption{The mapping cone formula for surgery on a knot in $L(3,1)$ resulting in a three-manifold $Y'$ with $|H_1(Y')| = 5$ corresponding to the $\spinc$ structure $G_{Y',K'}(\xi)$.  } \label{fig:mapping-cone-picture}
\end{figure}

Ozsv\'ath and Szab\'o show that for fixed $\xi$, there exists $N$ such that $\vplus_{\xi+ j\cdot PD[\mu]}$ and $\hplus_{\xi - j \cdot PD[\mu]}$ are quasi-isomorphisms for $j > N$.  Using this, the mapping cone formula is quasi-isomorphic (via projection) to the quotient complex depicted in Figure~\ref{fig:mapping-cone-truncated}.  We will denote the truncated complex by $\XplusN_\xi$, which now depends on $\xi$, even though the homology does not.  Note that the shape of the truncation is special to the case that $H_1(Y_\lambda(K))$ has order $3k-1$. Were the order to be $3k + 1$, there would instead be one more $\Bplus_\xi$ than $\Aplus_\xi$ and $\hplus_\xi$ would translate by $-(3k+1)PD[m]$.  This issue will be dealt with in Proposition~\ref{prop:3k+1-surgery-formula} by reversing orientations and performing surgery on knots in $-L(3,1)$ instead.

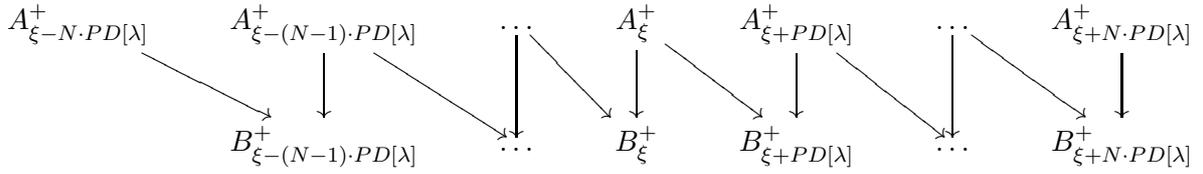
\begin{figure}
\[
\xymatrix{
\Aplus_{\xi - N \cdot PD [\lambda]} \ar[dr] & \Aplus_{\xi - (N-1) \cdot PD [\lambda]} \ar[d] \ar[dr] & \ldots \ar[d] \ar[dr] & \Aplus_{\xi} \ar[d] \ar[dr] & \Aplus_{\xi+PD[\lambda]} \ar[d] \ar[dr] & \ldots \ar[d] \ar[dr] & \Aplus_{\xi + N \cdot PD [\lambda]} \ar[d] \\
& \Bplus_{\xi - (N-1) \cdot PD [\lambda]} & \ldots & \Bplus_{\xi} &\Bplus_{\xi+PD[\lambda]} & \ldots &  \Bplus_{\xi + N \cdot PD [\lambda]}
}
\]
\caption{The truncated mapping cone $\X^{+,N}_\xi$ computing $CF^+(Y_\lambda(K), G_{Y_\lambda(K),K_\lambda}(\xi))$ in the case that $|H_1(Y_\lambda(K))| \equiv -1 \pmod{3}$.} \label{fig:mapping-cone-truncated}
\end{figure}

By \cite[Lemma 6.7]{BBCW}, since $Y_\lambda(K)$ is an L-space obtained by a distance one surgery in an L-space, we have that 
\begin{equation}\label{eq:Axi=T}
H_*(\Ahat_\xi) \cong \FF, \ H_*(\Aplus_\xi) \cong \T^+ \text{ for all } \xi \in \relspinc(Y,K).
\end{equation}
Indeed, the orientation conventions from \cite[Lemma 6.7]{BBCW} are specified by the condition that $[\mu]$ and $[\lambda]$ are positive multiples of the same homology class, which is the setting we are in.  Of course, since $Y = L(3,1)$ is an L-space, we also have that $H_*(\Bhat_\xi) \cong \FF$ and $H_*(\Bplus_\xi)\cong \T^+$ for all $\xi$.  Equation~\eqref{eq:Axi=T} implies that the Heegaard Floer homology of $Y_\lambda(K)$ is completely determined by the numbers $V_\xi$ and $H_\xi$ for each $\xi \in \relspinc(Y,K)$.

\subsection{$\spinc$ structures}\label{sec:mc-spinc}
In order to understand the Heegaard Floer homology of surgery using the mapping cone, we must understand the various $\spinc$ and relative $\spinc$ structures that appear.  These are well-understood in the setting of a nullhomologous knot, and are likely known to experts, but we include them here for completeness.  As in the previous subsection, $K$ will denote a homologically essential knot in $Y = L(3,1)$ and $\lambda$ is a framing such that $Y' = Y_\lambda(K)$ is an L-space with $|H_1(Y')| = 3k-1$ for some $k > 0$. 

Fix $n \gg 0$ throughout.  By fixing the appropriate parity of $n$, we can compute from Lemma~\ref{lem:homology-1} that ``large positive surgery'', i.e. $Y_{n\mu+\lambda}(K)$, has a unique self-conjugate $\spinc$ structure.  We denote this by $\mft_0$.  Further, let $\xi_0 = \Xi(\mft_0)$ be the induced relative $\spinc$ structure as in \eqref{eq:large-surg}.  Recall that for $\xi \in \relspinc(Y,K)$, we write $\mfv_\xi$ and $\mfh_\xi$ to be the $\spinc$ structures on $W'_n$ defined above \eqref{eq:large-surg}.  

\begin{proposition}\label{prop:Vs=H-s}
Let $[\gamma] \in H_1(M)$.  Then, $V_{\xi_0 + PD[\gamma]} = H_{\xi_0 - PD[\gamma]}$. 
\end{proposition}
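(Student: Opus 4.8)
The plan is to deduce the statement from the conjugation symmetry of the knot Floer complex for rationally null-homologous knots \cite{OSRational}, in exact parallel with the proof that $V_s = H_{-s}$ for knots in $S^3$. Write $J \colon \relspinc(Y,K) \to \relspinc(Y,K)$ for the conjugation involution; it is affine for the $H^2(Y,K)$-action, meaning $J(\xi + a) = J(\xi) - a$. The conjugation symmetry gives, for each $\xi$, a filtered chain homotopy equivalence between $C_\xi = CFK^\infty(Y,K,\xi)$ and $C_{J\xi}$ that interchanges the algebraic and Alexander filtrations. First I would push this through the quotients defining $\Aplus_\xi = C_\xi\{\max\{i,j\}\geq 0\}$ and $\Bplus_\xi = C_\xi\{i\geq 0\}$: the flip $(i,j)\mapsto(j,i)$ carries $\Aplus_\xi$ to $\Aplus_{J\xi}$, carries the vertical projection $\vplus_\xi$ to the horizontal projection underlying $\hplus_{J\xi}$, and vice versa. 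Comparing the induced maps on the towers $\T^+$ then yields $V_\xi = H_{J\xi}$ for every $\xi \in \relspinc(Y,K)$, where the $PD[\lambda]$-translation appearing in the target of $\hplus$ is the one already built into its definition and so requires no extra correction.

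Next I would show $J\xi_0 = \xi_0$. Recall $\xi_0 = \Xi(\mft_0)$, where $\mft_0$ is the unique self-conjugate $\spinc$ structure on the large surgery $Y_{n\mu+\lambda}(K)$. The claim is that $\Xi$ intertwines conjugation on $\spinc(Y_{n\mu+\lambda}(K))$ with $J$; granting this, $J\xi_0 = \Xi(\overline{\mft_0}) = \Xi(\mft_0) = \xi_0$. The equivariance of $\Xi$ follows from the conjugation-invariance of the cobordism maps $f_{W'_n,\mfv_\xi}$, $f_{W'_n,\mfh_\xi}$, the relation $\mfh_\xi = \mfv_\xi + PD[F]$ together with $PD[F]|_Y = PD[K]$, and the commutative squares \eqref{eq:large-surg} identifying these cobordism maps with $\vplus_\xi$ and $\hplus_\xi$. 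In particular, the very existence of a self-conjugate $\spinc$ structure on the large surgery is what forces $J$ to fix $\xi_0$, rather than merely reflect about a non-integral point of $\relspinc(Y,K)\cong\Z$.

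Finally, for $[\gamma] \in H_1(M)$ we have $PD[\gamma] \in H^2(M,\partial M) = H^2(Y,K)$, so affineness of $J$ and $J\xi_0 = \xi_0$ give
\[
J(\xi_0 + PD[\gamma]) = J(\xi_0) - PD[\gamma] = \xi_0 - PD[\gamma],
\]
and applying $V_\xi = H_{J\xi}$ to $\xi = \xi_0 + PD[\gamma]$ completes the argument.

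The hard part will be bookkeeping rather than conceptual: tracking relative $\spinc$ labels through the flip map so that $\vplus_\xi$ is matched with $\hplus_{J\xi}$ with no leftover $PD[\lambda]$, and making the equivariance of $\Xi$ under conjugation precise from the mapping-cone package (these are routine but fiddly analogues of the corresponding $S^3$ statements). If the $\Xi$-equivariance proves awkward to cite cleanly, an alternative is to argue directly on the negative-definite cobordism $W'_n$: conjugation carries $\mfv_{\xi_0}$ to $\mfh_{J\xi_0}$ (using $\mfh_\xi = \mfv_\xi + PD[F]$), and self-conjugacy of the restriction $\mft_0$ pins down $J\xi_0$ that way.
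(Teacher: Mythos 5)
Your skeleton---a global symmetry $V_\xi = H_{J\xi}$ coming from flipping the two filtrations, followed by the identification $J\xi_0 = \xi_0$ and the affine step $J(\xi_0 + PD[\gamma]) = \xi_0 - PD[\gamma]$---is consistent with what is true, and the first half (the flip exchanges $\vplus$ with the projection underlying $\hplus$, and the extra homotopy equivalence in the definition of $\hplus$ does not change the $U$-power on towers) can be made rigorous. For comparison, the paper never invokes the intrinsic flip symmetry at all: it reads $V_\xi$ and $H_\xi$ off the cobordism maps $f_{W'_n,\mfv_\xi}$, $f_{W'_n,\mfh_\xi}$ via \eqref{eq:large-surg}, reduces the proposition (by conjugation invariance of cobordism maps) to showing $\mfv_{\xi_0+PD[\gamma]}$ and $\mfh_{\xi_0-PD[\gamma]}$ are conjugate on $W'_n$, and then carries this out with Chern classes, using the affine identification $\epsilon\colon H^2(W'_n)\cong H^2(M,\partial M)$ from \cite[Proof of Lemma 2.6]{NiVafaee} to pass from $[\gamma]=0$ to general $[\gamma]$.

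The genuine gap is in your determination of the center of symmetry, which is the real content of the statement. Self-conjugacy of $\mft_0$ does not by itself ``pin down'' $J\xi_0$: it only says that $\overline{\mfv}_{\xi_0}$ is \emph{some} extension of $\mft_0$ over $W'_n$, i.e.\ $\overline{\mfv}_{\xi_0} = \mfv_{\xi_0} + a\,PD[F]$ for some $a\in\Z$, and nothing in your sketch rules out $a\neq 1$ (for instance $a=0$, in which case conjugation fixes $\mfv_{\xi_0}$, the reflection point of $J$ is not $\xi_0$, and the conclusion would come out with an unwanted shift). The paper closes exactly this loophole: $\mfv_{\xi_0}$ and $\mfh_{\xi_0}$ are characterized as the two extensions of $\mft_0$ with maximal $c_1^2$ (\cite[Proof of Proposition 4.2]{OSRational}); if either were self-conjugate its $c_1$ would vanish in $H^2(W'_n)\cong\Z$, making $W'_n$ spin, which contradicts Lemma~\ref{lem:cobordism}\eqref{cob:spin} together with the parity choice of $n$ forcing $|H_1(Y_{n\mu+\lambda}(K))|$ odd; hence conjugation must \emph{swap} $\mfv_{\xi_0}$ and $\mfh_{\xi_0}$. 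The same issue hides in your assertion that $\Xi$ intertwines conjugation with $J$ ``with no leftover $PD[\mu]$'': for rationally null-homologous knots the conjugation symmetry generically carries a shift, and verifying that the fixed point lands at $\xi_0$ rather than at a translate (or at a non-lattice point) is not bookkeeping---it is precisely the $c_1^2$/spin argument above, so your proposed shortcut does not avoid it. Once that step is supplied, your final affine manipulation matches the paper's propagation using that $E_{K,n,\lambda}$ is affine over $\epsilon$ and injective.
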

This is the analogue of the more familiar formula $V_s = H_{-s}$ for knots in $S^3$.  
\begin{proof}
We will use an observation of Ni and Vafaee from \cite[Proof of Lemma 2.6]{NiVafaee}.  Consider the pair $(W'_n, H)$, where $H$ is the 2-handle attached to $Y \times I$.  Note that $H$ is contractible, so we see that $H^2(W'_n)\cong H^2(W'_n,H) \cong H^2(Y,K)$.   By excision, we now see that $H^2(W'_n)$ is naturally identified with $H^2(M, \partial M)\cong\Z$.  We define $\epsilon$ to be this identification.  The assignment $E_{K,n,\lambda}:\spinc(W'_n) \to \relspinc(Y,K)$ discussed above \eqref{eq:E-PDS} is affine over $\epsilon$, i.e., $E_{K,n,\lambda}(\mfs) - E_{K,n,\lambda}(\mfs') = \epsilon(\mfs - \mfs')$.  It follows from \eqref{eq:E-PDS} that $\epsilon(PD[F]) = n PD[\mu] + PD[\lambda]$.  For shorthand, we write $E$ for $E_{K,n,\lambda}$.  

By the conjugation invariance of $\spinc$ cobordism maps in Floer homology \cite[Theorem 3.6]{OSSmoothFour}, it suffices to show that $\mfv_{\xi_0 + PD[\gamma]}$ and $\mfh_{\xi_0 - PD[\gamma]}$ are conjugate $\spinc$ structures on $W'_n$. Because $W'_n$ is definite and $H^2(W'_n)\cong\Z$, the $\spinc$-conjugation classes are completely determined by $c_1^2$.  First, we will establish that $\overline{\mfv}_{\xi_0} = \mfh_{\xi_0}$, i.e., the case of $[\gamma] = 0$.  

It follows from \cite[Proof of Proposition 4.2]{OSRational} that $\mfv_{\xi_0}$ and $\mfh_{\xi_0}$ are characterized as the two $\spinc$ structures on the negative-definite cobordism $W'_n$ extending $\mft_0$ which have the largest values of $c_1^2$.  Indeed, there it is shown that every $\spinc$ structure extending $\mft_0$ is of the form $\mfv_{\xi_0} + n \cdot PD[F]$ and that one of $\mfv_{\xi_0}, \mfh_{\xi_0}$ maximizes the quadratic function $c_1(\mfv_{\xi_0} + n \cdot PD[F])^2$.  If there was an additional $\spinc$ structure sharing the same value of $c_1^2$ with one of $\mfv_{\xi_0}$ or $\mfh_{\xi_0}$, this would imply that the first Chern class of the maximizing $\spinc$ structure would be 0, forcing $W'_n$ to be $\spin$.  By Lemma~\ref{lem:cobordism}, this implies that $|H_1(Y_{n\mu + \lambda}(K))|$ is even, contradicting the choice of $n$ made at the beginning of this subsection.  

Of course $c_1(\mfv_{\xi_0})^2 = c_1(\overline{\mfv}_{\xi_0})^2$ and similarly for $\mfh_{\xi_0}$.  Because $\mft_0$ is self-conjugate on $Y_{n\mu + \lambda}(K)$, we deduce that either $\overline{\mfv}_{\xi_0} = \mfh_{\xi_0}$ and $\overline{\mfh}_{\xi_0} = \mfv_{\xi_0}$ or $\overline{\mfv}_{\xi_0} = \mfv_{\xi_0}$ and $\overline{\mfh}_{\xi_0} = \mfh_{\xi_0}$.   Since $\mfh_{\xi_0} = \mfv_{\xi_0} + PD[F]$, it must be that $\overline{\mfv}_{\xi_0} = \mfh_{\xi_0}$, proving the desired claim for $PD[\gamma] = 0$.  

Now, fix an arbitrary $[\gamma] \in H_1(M)$.  We see that 
\begin{align*}
E(\mfh_{\xi_0 - PD[\gamma]}) &= E(\mfv_{\xi_0 - PD[\gamma]}) + n PD[\mu] + PD[\lambda] \\
&= \xi_0 - PD[\gamma] + n PD[\mu] + PD[\lambda] \\
&= E(\mfh_{\xi_0}) - PD[\gamma] \\
&= E(\mfh_{\xi_0} - \epsilon^{-1}(PD[\gamma])) \\
&= E(\overline{\mfv_{\xi_0} + \epsilon^{-1}(PD[\gamma])}),
\end{align*}
where the first three lines follow from \eqref{eq:E-PDS}, the fourth is the affine action of $H^2(W_n')$ on $\spinc(W_n')$, and the fifth is because $\overline{\mfv}_{\xi_0} = \mfh_{\xi_0}$. 
Since $E$ is injective, we see that $\mfh_{\xi_0 - PD[\gamma]} = \overline{\mfv_{\xi_0} + \epsilon^{-1}(PD[\gamma])}$.  
On the other hand, $E(\mfv_{\xi_0} + \epsilon^{-1}(PD[\gamma])) = E(\mfv_{\xi_0 + PD[\gamma]})$ because $E$ is affine over $\epsilon$, and thus $\mfv_{\xi_0} + \epsilon^{-1}(PD[\gamma]) = \mfv_{\xi_0 + PD[\gamma]}$.  This establishes that $\mfv_{\xi_0 + PD[\gamma]}$ and $\mfh_{\xi_0 - PD[\gamma]}$ are conjugate, which is what we needed to show.  
\end{proof}

%\begin{remark}
%The above result can likely be proved more easily using the $i,$j-symmetry in $CFK^\infty$ under conjugation of relative $\spinc$ structures as used in \cite{NiWuZ2, OSLinks}.
%\end{remark}

\begin{remark}
It follows from the proof of Proposition~\ref{prop:Vs=H-s} that if $\mft_+, \mft_- \in \spinc(Y_{n\mu + \lambda}(K))$ are such that $\Xi(\mft_\pm) = \xi_0 \pm PD[\gamma]$ for $[\gamma] \in H_1(M)$, then $\mft_+$ and $\mft_-$ are conjugate.   
\end{remark}
%
%Recall that we have assumed that $Y'=Y_\lambda(K)$ has $|H_1(Y_\lambda(K))| = 3k-1$, where $k$ is even as a consequence of Proposition \ref{}.  Consequently, this manifold has a unique self-conjugate $\spinc$ structure.  The following lemma will identify the relative $\spinc$-structure corresponding with the self-conjugate $\spinc$ structure on $Y_\lambda(K)$.

\begin{figure}
\begin{center}
\begin{tikzpicture}

\node[anchor=south west,inner sep=0] at (0,0) {\includegraphics[width=3in]{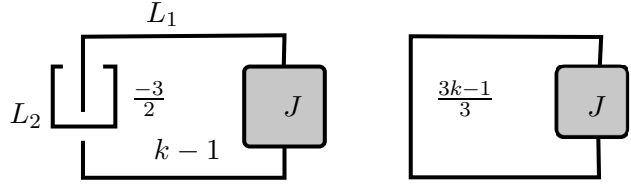}};

%\draw[help lines] (0,0) grid (10,3);
%% This is done by drawing a label above and right
%% of the vertex you're labeling, rather than giving
%% the coordinates of the label itself.
%% Alternately, if you wanted to position the label
%% yourself, you could use a command like:
\node[label=above right:$L_1$] at (1,1.8){};
\node[label=above right:$L_2$] at (-.8,0.45){};
\node[label=above right:{$J$}] at (2.8,0.6){};
\node[label=above right:{$\frac{-3}{2}$}] at (0.8,0.6){};
\node[label=above right:{$k-1$}] at (1.1,0){};
\node[label=above right:{$J$}] at (6.8,0.6){};
\node[label=above right:{$\frac{3k-1}{3}$}] at (4.8,0.6){};

\end{tikzpicture}
\end{center}

\caption{Surgery on the link $L=L_1\cup L_2\subset S^3$ is equivalent by a slam-dunk move to surgery along the knot $J\subset S^3$. (Left) The surgery diagram also shows integral surgery on the knot $K_J$ in $L(3,1)$ yielding a manifold with $|H_1| = 3k-1$.}
\label{fig:surgerydiagram}
\end{figure}

In order to prove Proposition~\ref{prop:3k-1-surgery-formula}, we will need to identify self-conjugate $\spinc$ structures on $Y_\lambda(K)$ in the mapping cone formula.  This will be done in Lemmas~\ref{lem:spin-surg} and  \ref{lem:theta} below.  Before doing so, it will be useful to describe a particular example of $Y_\lambda(K)$ by a concrete surgery diagram.  (See Figure \ref{fig:surgerydiagram}.) Let $L^J = L_1 \cup L_2$ denote the Hopf link connect sum with a knot $J \subset S^3$ at $L_1$.  We may consider $Y$ as $-3/2$-surgery on $L_2$, where $K$ is the image of $L_1$ under the surgery.  We will write this special knot in $L(3,1)$ as $K_J$. 
In this case, $\lambda$ is represented by the framing $k-1$ on $L_1$, and after a slam-dunk move, we see that the resulting manifold is $S^3_{\frac{3k-1}{3}}(J)$. In general, to compute $p/q$-surgery on a knot $J$ in $S^3$ using the mapping cone formula, we follow the recipe of Ozsv\'ath-Szab\'o.  First, define
$$
\frac{r}{q} =\frac{p}{q} - \left\lfloor \frac{p}{q} \right\rfloor .
$$
Then, consider surgery on the link $L^J$ where $L_2$ has coefficient $-q/r$ and $L_1$ has integral surgery coefficient $\lfloor \frac{p}{q} \rfloor$. 

In particular, $K_U$ will be an important knot to understand later on, where $U$ is the unknot in $S^3$.  This is the reason the $d$-invariants of $L(3k-1,3)$ show up in Proposition~\ref{prop:3k-1-surgery-formula}.  Finally, we note that $K_U$ is a core of the genus one Heegaard splitting of $L(3,1)$.   
\begin{lemma}\label{lem:spin-surg}
Let $\xi_0$ be as above.  Then, $G_{Y_\lambda(K),K_\lambda}(\xi_0)$ is a self-conjugate $\spinc$ structure on $Y_\lambda(K)$.    
\end{lemma}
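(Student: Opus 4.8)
The plan is to use the description of $\mft_0$ as the unique self-conjugate $\spinc$ structure on the large surgery $Y_{n\mu+\lambda}(K)$ together with the functoriality and conjugation-invariance of the cobordism maps. The cobordism $W'_n$ runs from $Y_{n\mu+\lambda}(K)$ to $Y = L(3,1)$, and by construction $\xi_0 = \Xi(\mft_0)$. The point is to relate $G_{Y_\lambda(K),K_\lambda}(\xi_0)$ to $\mft_0$ through a chain of cobordisms (the two-handle cobordism $W_\lambda(K): Y \to Y_\lambda(K)$, or equivalently the natural cobordism from $Y_{n\mu+\lambda}(K)$ to $Y_\lambda(K)$ obtained by blowing down, as in the standard surgery-exact-triangle picture). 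Since $\mft_0$ is self-conjugate and conjugation commutes with the maps $G_{Y,\pm K}$ up to the $PD[\lambda]$ shift recorded in Section~\ref{subsec:surgery-review}, one expects $G_{Y_\lambda(K),K_\lambda}(\xi_0)$ to be fixed by conjugation.

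Concretely, first I would recall that conjugation acts on $\relspinc(Y,K) \cong \Z$ by an affine involution, and on $\relspinc(Y,K)$ one has the compatibility $\overline{G_{Y,K}(\xi)} = G_{Y,-K}(\bar\xi)$ (conjugation-invariance of the Chern class, since $G$ is defined via $c_1$). Combined with the relation $G_{Y,-K}(\xi) = G_{Y,K}(\xi) + PD[\lambda]$ from the excerpt, this says $\overline{G_{Y,K}(\xi)} = G_{Y,K}(\bar\xi) + PD[\lambda]$. Next, I would identify which relative $\spinc$ structure is the ``center'' of the conjugation symmetry: the proof of Proposition~\ref{prop:Vs=H-s} already establishes that $\overline{\mfv}_{\xi_0} = \mfh_{\xi_0}$ on $W'_n$, and by \eqref{eq:E-PDS} this translates into a statement pinning down $\bar\xi_0$ in terms of $\xi_0$, $PD[\mu]$ and $PD[\lambda]$ — essentially $\bar\xi_0 = \xi_0 + (\text{correction})$ where the correction is exactly what is needed so that applying $G_{Y_\lambda(K),K_\lambda}$ and using the displayed compatibility yields $\overline{G_{Y_\lambda(K),K_\lambda}(\xi_0)} = G_{Y_\lambda(K),K_\lambda}(\xi_0)$. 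Alternatively, and perhaps more cleanly, I would argue on the level of Heegaard Floer homology: the $\spinc$ structure $G_{Y_\lambda(K),K_\lambda}(\xi_0)$ is the one whose $CF^+$ is computed by the summand $\Xplus_{\xi_0}$ of the mapping cone, and the conjugation symmetry of the whole knot Floer package induces a symmetry of the mapping cone sending $\Xplus_{\xi_0}$ to $\Xplus_{\bar\xi_0}$; showing these two summands coincide reduces to the $\spinc$ bookkeeping just described.

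The main obstacle I anticipate is the bookkeeping with the various identifications — $\Xi$, $E_{K,n,\lambda}$, $\epsilon$, and the maps $G_{Y,\pm K}$ — and in particular making sure the ``correction term'' in $\bar\xi_0$ (coming from the fact that $[\lambda] = (3k-1)[m]$ is an \emph{odd} multiple of $[m]$, so that $n\mu+\lambda$ has a unique self-conjugate $\spinc$ structure) is handled with the right parity. This is precisely the place where the hypothesis $|H_1(Y')| = 3k-1$ odd (equivalently, the parity of $n$ chosen at the start of Section~\ref{sec:mc-spinc}) enters, and it is also why the analogous statement in Proposition~\ref{prop:3k+1-surgery-formula} needs to be run on $-L(3,1)$ instead. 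I would expect the cleanest writeup to lean on the already-proven fact $\overline{\mfv}_{\xi_0} = \mfh_{\xi_0}$ from Proposition~\ref{prop:Vs=H-s} and deduce the lemma as a short corollary, rather than re-deriving the $\spinc$ combinatorics from scratch.
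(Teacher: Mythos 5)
Your outline takes a genuinely different route from the paper (direct conjugation bookkeeping rather than a model-knot computation), but as written it has a gap at exactly the step that carries all the content. The crucial point is to determine $\bar\xi_0$ precisely, i.e.\ to state and prove how conjugation interacts with the identifications $\Xi$ and $E_{K,n,\lambda}$ and with the filling maps $G_{Y',\pm K'}$ for the surgered manifold $Y' = Y_\lambda(K)$ and its core. In your sketch this is replaced by the assertion that the correction in ``$\bar\xi_0 = \xi_0 + (\text{correction})$'' is ``exactly what is needed'' for self-conjugacy --- which is circular. The deferred bookkeeping is not routine: for example, if one tries the naive compatibility $E(\bar{\mfs}) = \overline{E(\mfs)}$ together with $\overline{\mfv}_{\xi_0} = \mfh_{\xi_0}$ and \eqref{eq:E-PDS}, one gets $\bar\xi_0 = \xi_0 + n\,PD[\mu] + PD[\lambda]$, which depends on the auxiliary large integer $n$ and so cannot be the conjugation on $\relspinc(Y,K)$; conjugation of relative $\spinc$ structures carries a boundary correction that must be pinned down. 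Likewise, the relation $G_{Y,-K}(\xi) = G_{Y,K}(\xi) + PD[\lambda]$ you quote is for $(Y,K)$, where $[K]=[\lambda]$; for the core $K_\lambda \subset Y'$ the analogous shift is by the class of $K_\lambda$, which is $\pm[\mu]$ in $H_1(Y')$, not $[\lambda]$. Finally, note that $W'_n$ is a cobordism to $Y$, not to $Y'$, so $\overline{\mfv}_{\xi_0} = \mfh_{\xi_0}$ by itself says nothing about $\spinc(Y')$; the ``chain of cobordisms'' you mention (e.g.\ $W_\lambda(K)$, or blowing down) is never actually used, and it is where a Chern-class evaluation in the spirit of Lemma~\ref{lem:spin-3k}\eqref{spin-3k:1} would have to be carried out to make your approach rigorous.

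For comparison, the paper sidesteps all of this $\spinc$ bookkeeping: since the assertion depends only on homological data, it suffices to verify it for the model knot $K_U$ (a core of the genus-one Heegaard splitting), for which $Y_\lambda(K_U) = L(3k-1,3)$ and the mapping cone is the explicit one for $(3k-1)/3$-surgery on the unknot in $S^3$. Comparing the unknot values $\mathcal{V}_s, \mathcal{H}_s$ of \eqref{eq:Vs-unknot} with Proposition~\ref{prop:Vs=H-s} and \eqref{eq:Vs-inequality} forces $g(\xi_0) = 1$, and by \eqref{eq:spin-formula} the label $1$ corresponds to a self-conjugate $\spinc$ structure on $L(3k-1,3)$. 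If you want to salvage your route, you must either prove the precise conjugation-equivariance statements with their shifts, or reduce to a model knot as the paper does; as it stands, the proposal identifies the right ingredients but does not prove the lemma.
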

%In some sense the argument given below is backwards, but we find it to be simpler for those more familiar with the surgery formulas for knots in $S^3$ than rationally nullhomologous knots.  
\begin{proof}
By assumption, $\xi_0 = \Xi(\mft_0)$ is the relative $\spinc$ structure induced by the unique self-conjugate $\spinc$ structure  on the large positive surgery $Y_{n\mu+\lambda}(K)$. Since the statement is purely homological, it suffices to prove the lemma in the case of a particular \emph{model knot}, provided that this knot is homologically essential in $L(3,1)$. 
Thus we consider the model knot $K_U$ as described above.  In our case, we are interested in the $+(k-1)$-framed two-handle attachment along $K_U \subset -L(3,2)$ illustrated in Figure \ref{fig:surgerydiagram}.  The mapping cone formula in this case has been explicitly computed in \cite{OSRational} and can be completely rephrased in terms of the knot Floer complex for the unknot in $S^3$.  More precisely, this is the mapping cone formula for $(3k-1)/3$-surgery along the unknot in $S^3$.

%We will be mostly interested in the case where $J$ is the unknot $U$. 

Write $\mathcal{A}^+_s$ and $\mathcal{V}_s$, $\mathcal{H}_s$ for the $A^+_s$-complexes and numerical invariants $V_s, H_s$ coming from the mapping cone formula for integer surgeries along the unknot in $S^3$, computed in \cite[Section 2.6]{OSInteger}.  The proof of \cite[Theorem 1.1]{OSRational} shows that there exists an affine isomorphism $g : \relspinc(Y,K_U) \to \Z$,  such that
\begin{equation}
\Aplus_{\xi} = \mathcal{A}^+_{\lfloor \frac{g(\xi)}{3} \rfloor},  
\end{equation}
for each $\xi \in \relspinc(Y,K_U)$.  Furthermore, we have that $V_\xi = \mathcal{V}_{\lfloor \frac{g(\xi)}{3} \rfloor}$ and $H_\xi = \mathcal{H}_{\lfloor \frac{g(\xi)}{3} \rfloor}$.  In this setting, the $\spinc$ structure $G_{Y_\lambda(K),K_\lambda}(\xi)$ is, up to conjugation, the $\spinc$ structure on $L(3k-1,3)$ corresponding to $g(\xi)$ modulo $3k-1$.   We claim that $g(\xi_0) = 1$, which is sufficient since on $L(3k-1,3)$, 1 corresponds to a self-conjugate $\spinc$ structure by \eqref{eq:spin-formula}.  

From \cite[Section 2.6]{OSInteger}, we have 
\begin{equation}\label{eq:Vs-unknot}
\mathcal{V}_s = \begin{cases} 0 & \text{ if } s \geq 0 \\ -s & \text{ if } s < 0\end{cases}, \mathcal{H}_s = \begin{cases} s & \text{ if } s \geq 0 \\ 0 & \text{ if } s < 0.  \end{cases}
\end{equation}
In order for the $\mathcal{V}_s$ and $\mathcal{H}_s$ to be compatible with Proposition~\ref{prop:Vs=H-s} and \eqref{eq:Vs-inequality}, since $\xi \mapsto \lfloor \frac{g(\xi)}{3} \rfloor$, we must have that $g(\xi_0) = 1$, completing the proof.     
\end{proof}

\subsection{L-space surgeries and truncation}\label{sec:mc-truncation}
We make the same hypotheses on $K \subset Y = L(3,1)$ as in the previous two subsections.  While \eqref{eq:Axi=T} states that $H_*(\Xplus_{\xi}) \cong H_*(\Aplus_\xi) \cong  \T^+$ for each $\xi$ in the case that $Y_\lambda(K)$ is an L-space, we have not determined ``where'' in the mapping cone the non-zero element of lowest grading is supported.  The analogous question is well-known for surgery on knots in $S^3$ (see \cite{NiWu} for example), but is more subtle in the present setting, since we cannot directly compare $V_\xi$ and $V_{\xi + PD[\lambda]}$.  Indeed, $\xi$ and $\xi + PD[\lambda]$ do not differ by a multiple of $PD[\mu]$.  The next lemma will help us to understand this in the case of $[\xi_0]$.  Before stating the lemma, observe that there are natural quotient maps 
\[
\Piplus_{\xi} : \Xplus_{\xi} \to \Aplus_\xi, \ \Pihat_{\xi} : \Xhat_{\xi} \to \Ahat_\xi,
\] 
for any $\xi \in \relspinc(Y,K)$.

\begin{lemma}\label{lem:proj-qi}
Suppose that $Y_\lambda(K)$ is an L-space.  Then, the projection $\Piplus_{\xi_0}: \Xplus_{\xi_0} \to \Aplus_{\xi_0}$ is a quasi-isomorphism.
\end{lemma}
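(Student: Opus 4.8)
The plan is to reduce the statement, via the standard passage between the plus and hat flavors of Heegaard Floer homology, to an elementary computation in the truncated mapping cone. So first I would handle the passage from hat to plus. By \eqref{eq:Axi=T} the L-space hypothesis gives $H_*(\Aplus_{\xi_0}) \cong \T^+$, and Theorem~\ref{thm:mappingcone} together with $Y_\lambda(K)$ being an L-space gives $H_*(\Xplus_{\xi_0}) \cong \T^+$; likewise $H_*(\Ahat_{\xi_0}) \cong \FF \cong H_*(\Xhat_{\xi_0})$. The projection $\Piplus_{\xi_0}$ is a map of $\FF[U]$-modules, homogeneous of degree $0$ once $\Aplus_{\xi_0}$ is given the grading it carries as a summand of $\Xplus_{\xi_0}$, and it is compatible with the exact sequences relating $\widehat{HF}$ and $HF^+$. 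Since $\mathrm{coker}(U\colon \T^+ \to \T^+) = 0$, a degree-$0$ $U$-equivariant endomorphism of $\T^+$ is the identity as soon as it is nonzero, so it is enough to prove that the induced map $\Pihat_{\xi_0}\colon H_*(\Xhat_{\xi_0}) \to H_*(\Ahat_{\xi_0})$ --- a degree-$0$ map between one-dimensional $\FF$-vector spaces --- is nonzero.

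Next I would translate this hat statement into linear algebra. Working with the truncated hat mapping cone $\Xhat^{N}_{\xi_0}$ of Figure~\ref{fig:mapping-cone-truncated}, whose summands are $\Ahat_{\xi_0 + jPD[\lambda]}$ for $-N \le j \le N$ and $\Bhat_{\xi_0 + jPD[\lambda]}$ for $-N+1 \le j \le N$, \eqref{eq:Axi=T} tells us each summand has one-dimensional homology. Hence the long exact sequence of the mapping cone expresses $H_*(\Xhat^{N}_{\xi_0})$ through a single $\FF$-linear map $\widehat\Phi$ between the direct sums of these one-dimensional groups: a bidiagonal matrix whose $j$-th vertical entry $\vhat_{\xi_0 + jPD[\lambda]}$ is nonzero exactly when $V_{\xi_0 + jPD[\lambda]} = 0$ and whose $j$-th diagonal entry $\hhat_{\xi_0 + jPD[\lambda]}$ is nonzero exactly when $H_{\xi_0 + jPD[\lambda]} = 0$. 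A dimension count shows that the L-space hypothesis $\dim H_*(\Xhat^{N}_{\xi_0}) = 1$ forces $\widehat\Phi$ to be surjective with one-dimensional kernel; under the resulting identification $H_*(\Xhat^{N}_{\xi_0}) \cong \ker\widehat\Phi$ the map $\Pihat_{\xi_0}$ becomes the projection of $\ker\widehat\Phi$ onto its $\Ahat_{\xi_0}$-summand. So the whole problem reduces to showing that the generator of $\ker\widehat\Phi$ has nonzero $\Ahat_{\xi_0}$-component.

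For this last point I would argue as follows. Setting the $\Ahat_{\xi_0}$-coordinate to zero decouples the kernel equations into two independent systems, governed by square matrices $\Phi^{+}$ (coordinates $j = 1,\dots,N$) and $\Phi^{-}$ (coordinates $j = -1,\dots,-N$) obtained by restricting $\widehat\Phi$; extending by zero then embeds $\ker\Phi^{+}$ and $\ker\Phi^{-}$ as linearly independent subspaces of $\ker\widehat\Phi$, so $\dim\ker\Phi^{+} + \dim\ker\Phi^{-} \le 1$. Here is where the distinguished choice of $\xi_0$ enters: Proposition~\ref{prop:Vs=H-s}, which asserts $V_{\xi_0 + PD[\gamma]} = H_{\xi_0 - PD[\gamma]}$ for every $[\gamma] \in H_1(M)$ --- and which rests on $\xi_0$ coming from the self-conjugate $\spinc$ structure on large positive surgery, cf.\ Lemma~\ref{lem:spin-surg} --- identifies the pattern of nonzero entries of $\Phi^{-}$ with that of $\Phi^{+}$ after a reordering of rows and columns, so $\dim\ker\Phi^{+} = \dim\ker\Phi^{-}$. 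Therefore both vanish, $\Phi^{\pm}$ are invertible, and any element of $\ker\widehat\Phi$ whose $\Ahat_{\xi_0}$-coordinate is zero must itself be zero. Hence the generator of $\ker\widehat\Phi$ has nonzero $\Ahat_{\xi_0}$-component, $\Pihat_{\xi_0}$ is an isomorphism, and by the first paragraph $\Piplus_{\xi_0}$ is a quasi-isomorphism. The case $|H_1(Y_\lambda(K))| = 3k+1$ (used in Proposition~\ref{prop:3k+1-surgery-formula}) is handled identically after reversing orientations, as indicated in Section~\ref{sec:mc-proof}.

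The step I expect to require the most care is the identification of $\Phi^{-}$ with $\Phi^{+}$ via Proposition~\ref{prop:Vs=H-s}: this symmetry is precisely the feature that singles out $\xi_0$ among the relative $\spinc$ structures in its class, and without it the projection onto a ``wrong'' $A$-complex need not be a quasi-isomorphism. Everything else --- the dimension bookkeeping in the mapping cone and the standard hat-to-plus comparison --- is routine.
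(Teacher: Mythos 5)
Your proof is correct and follows essentially the same route as the paper's: reduce to the hat flavor, pass to the truncated mapping cone, and combine the L-space dimension count with the symmetry $V_{\xi_0+PD[\gamma]} = H_{\xi_0-PD[\gamma]}$ of Proposition~\ref{prop:Vs=H-s} to show that nothing supported away from $\Ahat_{\xi_0}$ can carry the one-dimensional homology. The only difference is packaging: the paper splits into the cases $V_{\xi_0}>0$ and $V_{\xi_0}=0$ and argues via acyclicity of the kernel subcomplexes $Q_\pm$, whereas your single linear-algebra argument on the bidiagonal matrix $\widehat\Phi$ subsumes both cases.
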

\begin{proof}
It suffices to prove that $\Pihat_{\xi_0}$ is a quasi-isomorphism.  

First, suppose that $V_{\xi_0} > 0$.  Since $H_*(\Aplus_{\xi_0}) = \T^+$, this is equivalent to $\vhat_{\xi_0}$ vanishing on homology.  By Proposition~\ref{prop:Vs=H-s}, we see that $\hhat_{\xi_0}$ vanishes on homology as well.  For notation, write $Q = \ker(\Pihat_{\xi_0})$.  Using the exact triangle
\[
\xymatrix{
H_*(Q) \ar[rr] & & \ar[dl]^{(\Pihat_{\xi_0})_*}  H_*(\Xhat_{\xi_0}) \\
& H_*(\Ahat_{\xi_0}) \ar[ul]^{(\vhat_{\xi_0} + \hhat_{\xi_0})_*} & 
}
\]
we see that $\Pihat_{\xi_0}$ is surjective on homology.  Since $H_*(\Xhat_{\xi_0})$ and $H_*(\Ahat_{\xi_0})$ are both one-dimensional, we see that $\Pihat_{\xi_0}$ must be a quasi-isomorphism.  

Next, suppose that $\vhat_{\xi_0}$ and $\hhat_{\xi_0}$ are non-zero on homology.  Recall that the quotient from $\Xhat_{\xi_0}$ to the truncated complex $\Xhat_{\xi_0}^N$ described above is a quasi-isomorphism.  Therefore, we will show that the projection from $\Xhat^N_{\xi_0}$ to $\Ahat_{\xi_0}$ is a quasi-isomorphism.  Note that the kernel of the quotient from $\Xhat_{\xi_0}^N$ to $\Ahat_{\xi_0}$ is a sum of two complexes, $Q_+$ and $Q_-$, where
\begin{eqnarray*}\label{eqn:Qplusminus}
	Q_+ &:=& \bigoplus_{j=1}^{N}  \Ahat_{\xi_0+j\cdot PD[\lambda]} \oplus \bigoplus_{j=1}^{N}  \Bhat_{\xi_0 + (j)\cdot PD[\lambda]} \\
	Q_- &:=& \bigoplus_{j=-N}^{-1}  \Ahat_{\xi_0+j\cdot PD[\lambda]} \oplus \bigoplus_{j=-N}^{-1}  \Bhat_{\xi_0 + (j+1)\cdot PD[\lambda]}. 
\end{eqnarray*}
These complexes are shown in Figure~\ref{fig:Q}.  
\begin{figure}
\[
\xymatrix{
\Ahat_{\xi_0 - N \cdot PD [\lambda]} \ar[dr]_h & \ldots \ar[d]_v \ar[dr]_h & \Ahat_{\xi_0 - PD[\lambda]} \ar[dr]^h \ar[d]_v & & \Ahat_{\xi_0+PD[\lambda]} \ar[d]_v \ar[dr]_h & \ldots \ar[d]_v \ar[dr]_h & \Ahat_{\xi _0+ N \cdot PD [\lambda]} \ar[d]^v \\
& \ldots & \Bhat_{\xi_0 - PD[\lambda]} & \Bhat_{\xi_0} &\Bhat_{\xi_0+PD[\lambda]} & \ldots &  \Bhat_{\xi_0 + N \cdot PD [\lambda]}
}
\]
\caption{The kernel of the quotient from $\Xhat_{\xi_0}^N$ to $\Ahat_{\xi_0}$.  The left summand (respectively right summand) corresponds to $Q_-$ (respectively $Q_+$).} \label{fig:Q}
\end{figure}
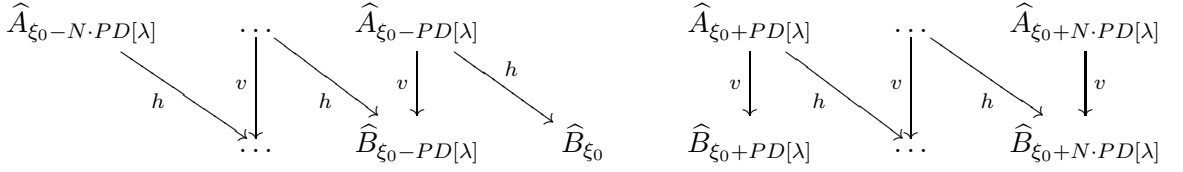	
Note that $\vhat_{\xi_0+j\cdot PD[\lambda]}$ (respectively $\hhat_{\xi_0+j\cdot PD[\lambda]}$) is a quasi-isomorphism for all $0 < j \leq N$ (respectively $-N \leq j < 0$) if and only if $Q_+$ (respectively $Q_-$) is acyclic.  Note that if some $\vhat_{\xi_0+j\cdot PD[\lambda]}$ vanishes on homology  for $0 < j \leq N$, then up to homotopy, $Q_+$ splits into a sum of two complexes, each with odd Euler characteristic, and thus $\dim H_*(Q_+) \geq 2$.  We have an analogous result for $\hhat_{\xi_0+j\cdot PD[\lambda]}$ and $Q_-$.   Note that these splittings exist because we are working with complexes over $\FF$.  

If $Q_+$ (respectively $Q_-$) is not acyclic, then by Proposition~\ref{prop:Vs=H-s}, $Q_-$ (respectively $Q_+$) is not acyclic.  Therefore, we see that if either $Q_+$ or $Q_-$ is not acyclic, then $H_*(Q)$ has dimension at least four.  This contradicts the fact that $H_*(\Xhat^N_{\xi_0}) = H_*(\Ahat_{\xi_0}) = \FF$, due to the exact triangle between $H_*(Q)$, $H_*(\Xhat^N_{\xi_0})$ and $H_*(\Ahat_{\xi_0})$.  Therefore, $Q$ is acyclic, and we see that the desired projection is a quasi-isomorphism.  
\end{proof}

The above argument shows that if $V_{\xi_0} > 0$, then $V_{\xi_0 + j \cdot PD[\lambda]} = H_{\xi_0 - j \cdot PD[\lambda]} = 0$ for all $j > 0$ when $Y_\lambda(K)$ is an L-space.  This will be useful for proving an analogue of Lemma~\ref{lem:proj-qi} for $\xi_0 + PD[\mu]$, which we now establish.

\begin{lemma}\label{lem:proj-qimu}
Suppose that $Y_\lambda(K)$ is an L-space and $V_{\xi_0} \geq 2$.  Then, the projection $\Piplus_{\xi_0 + PD[\mu]} : \Xplus_{\xi_0 + PD[\mu]} \to \Aplus_{\xi_0 + PD[\mu]}$ is a quasi-isomorphism.  
\end{lemma}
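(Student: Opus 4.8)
The plan is to imitate the first case in the proof of Lemma~\ref{lem:proj-qi}. As there, it suffices to show that the hat-flavored projection $\Pihat_{\xi_0 + PD[\mu]} : \Xhat_{\xi_0+PD[\mu]} \to \Ahat_{\xi_0+PD[\mu]}$ is a quasi-isomorphism; the statement for $\Piplus_{\xi_0+PD[\mu]}$ then follows from the usual $\FF[U]$-module comparison of the hat and plus flavors, exactly as in Lemma~\ref{lem:proj-qi}.

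The first step is to show that both maps emanating from the distinguished complex $\Ahat_{\xi_0+PD[\mu]}$ in the mapping cone vanish on homology, that is, $\vhat_{\xi_0+PD[\mu]}$ and $\hhat_{\xi_0+PD[\mu]}$ are zero on homology. By \eqref{eq:Axi=T} this is equivalent to $V_{\xi_0+PD[\mu]} > 0$ and $H_{\xi_0 + PD[\mu]} > 0$. The first inequality follows at once from the hypothesis $V_{\xi_0} \geq 2$ together with \eqref{eq:Vs-inequality}, which give $V_{\xi_0+PD[\mu]} \geq V_{\xi_0} - 1 \geq 1$. For the second, I would use Proposition~\ref{prop:Vs=H-s} (with $[\gamma]$ the negative of the meridian class) to rewrite $H_{\xi_0+PD[\mu]} = V_{\xi_0 - PD[\mu]}$, and then apply \eqref{eq:Vs-inequality} once more, now at $\xi_0 - PD[\mu]$, to obtain $V_{\xi_0-PD[\mu]} \geq V_{\xi_0} \geq 2$; hence $H_{\xi_0+PD[\mu]} \geq 2 > 0$.

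The second step is the exact-sequence argument. Setting $Q = \ker \Pihat_{\xi_0+PD[\mu]}$, the short exact sequence $0 \to Q \to \Xhat_{\xi_0+PD[\mu]} \to \Ahat_{\xi_0+PD[\mu]} \to 0$ has connecting homomorphism induced by $\vhat_{\xi_0+PD[\mu]} + \hhat_{\xi_0+PD[\mu]}$, which vanishes by the first step; hence $\Pihat_{\xi_0+PD[\mu]}$ is surjective on homology. Since $Y_\lambda(K)$ is an L-space, Theorem~\ref{thm:mappingcone} gives $H_*(\Xhat_{\xi_0+PD[\mu]}) \cong \widehat{HF}(Y_\lambda(K), G_{Y_\lambda(K),K_\lambda}(\xi_0+PD[\mu])) \cong \FF$, while $H_*(\Ahat_{\xi_0+PD[\mu]}) \cong \FF$ by \eqref{eq:Axi=T}; a surjection between one-dimensional $\FF$-vector spaces is an isomorphism, so $\Pihat_{\xi_0+PD[\mu]}$ is a quasi-isomorphism, completing the argument.

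The only genuinely new input beyond Lemma~\ref{lem:proj-qi} is controlling $H_{\xi_0+PD[\mu]}$ (equivalently, the vanishing of $\hhat_{\xi_0+PD[\mu]}$): because $\xi_0 + PD[\mu]$ and $\xi_0 - PD[\mu]$ differ by $2PD[\mu]$, which is not a multiple of $PD[\lambda]$, this cannot be read directly off the truncated cone, and one must detour through Proposition~\ref{prop:Vs=H-s} together with the monotonicity \eqref{eq:Vs-inequality}. This is also exactly where the hypothesis $V_{\xi_0} \geq 2$ — rather than merely $V_{\xi_0} \geq 1$ — is needed, namely to absorb the possible drop by one when passing from $\xi_0$ to $\xi_0 \pm PD[\mu]$.
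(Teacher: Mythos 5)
Your argument is correct, but it is not the route the paper takes, so a comparison is worthwhile. You establish that both maps out of $\Ahat_{\xi_0+PD[\mu]}$ die on homology: $V_{\xi_0+PD[\mu]}\geq V_{\xi_0}-1\geq 1$ from \eqref{eq:Vs-inequality}, and $H_{\xi_0+PD[\mu]}=V_{\xi_0-PD[\mu]}\geq V_{\xi_0}\geq 2$ from Proposition~\ref{prop:Vs=H-s} together with \eqref{eq:Vs-inequality} applied at $\xi_0-PD[\mu]$; you then rerun, verbatim, the kernel exact-triangle argument from the first case of Lemma~\ref{lem:proj-qi}, using \eqref{eq:Axi=T} and the L-space condition only to know that $H_*(\Ahat_{\xi_0+PD[\mu]})$ and $H_*(\Xhat_{\xi_0+PD[\mu]})$ are both one-dimensional. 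The paper instead never touches $H_{\xi_0+PD[\mu]}$: it uses the consequence of Lemma~\ref{lem:proj-qi} that $V_{\xi_0+j\cdot PD[\lambda]}=0$ for $j>0$, transfers this via \eqref{eq:Vs-inequality} to $V_{\xi_0+PD[\mu]+j\cdot PD[\lambda]}=0$ for $j>0$, quotients the cone by the resulting acyclic subcomplex to get a complex $\Xhat'$ in which $\hhat_{\xi_0+PD[\mu]}\equiv 0$, and exhibits an explicit generating cycle $a+b$ projecting to a generator of $H_*(\Ahat_{\xi_0+PD[\mu]})$. So your key new input is the conjugation symmetry $V_{\xi}=H$ at the reflected relative $\spinc$ structure, while the paper's is the vanishing of the $V$'s at positive multiples of $PD[\lambda]$ coming from the L-space splitting analysis; your version is arguably more symmetric and self-contained, while the paper's avoids any estimate on $H_{\xi_0+PD[\mu]}$ and reuses the truncation machinery it has already set up (which it also needs for Lemma~\ref{lem:theta}). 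One cosmetic point: in your closing remark, the hypothesis $V_{\xi_0}\geq 2$ (rather than $\geq 1$) is needed only on the $V$-side, to guarantee $V_{\xi_0+PD[\mu]}\geq 1$; the $H$-side estimate $V_{\xi_0-PD[\mu]}\geq V_{\xi_0}$ involves no drop, so there $V_{\xi_0}\geq 1$ would already suffice.
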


\begin{proof}
As discussed above, $V_{\xi_0 + j \cdot PD[\lambda]} = 0$ for all $j > 0$.  By \eqref{eq:Vs-inequality}, 
$$V_{\xi_0 + PD[\mu] + j \cdot PD[\lambda]} = 0$$
for all $j > 0$.  Therefore, the subcomplex consisting of the $\Ahat_{\xi}$ and $\Bhat_{\xi}$ with $\xi = \xi_0 + PD[\mu] + j \cdot PD[\lambda]$ with $j > 0$ is acyclic, so we quotient by this subcomplex.  Denote the result by $\Xhat'$, which has one-dimensional homology.    

By \eqref{eq:Vs-inequality} and the assumption that $V_{\xi_0}\geq 2$, we have that $V_{\xi_0 + PD[\mu]} \geq 1$, and thus $\vhat_{\xi_0 + PD[\mu]}$ is trivial on homology.  Choose $a \in \Ahat_{\xi_0 + PD[\mu]}$ and $b \in \Bhat_{\xi_0 + PD[\mu]}$ such that $a$ is a cycle generating the homology of $\Ahat_{\xi_0 + PD[\mu]}$ and $\partial b = \vhat_{\xi_0 + PD[\mu]}(a)$.  Then, $a + b \in {\Xhat'}$ is a cycle since $\hhat_{\xi_0 + PD[\mu]} \equiv 0$ in $\Xhat'$.  Of course, $a+b$ cannot be a boundary in $\Xhat'$, since $a$ is not a boundary, and we conclude that $a+b$ generates the homology of $\Xhat'$.  Since the projection onto $\Ahat_{\xi_0 + PD[\mu]}$ sends $a + b$ to $a$, we see that the projection from $\Xhat'$ to $\Ahat_{\xi_0 + PD[\mu]}$ is a quasi-isomorphism.  This is sufficient to yield the desired result.
\end{proof}

\begin{remark}\label{rmk:unknot-ok}
%We do not know if Lemma~\ref{lem:proj-qimu} holds in general without the condition that $V_{\xi_0} \geq 2$.  The reason why this may be difficult is that a priori, it is possible that $V_{\xi_0 + PD[\mu]} < V_{\xi_0 + PD[\lambda] + PD[\mu]}$.   
In the specific case that $K = K_U$ in $L(3,1)$, it can easily be computed from \eqref{eq:Vs-unknot} that $\Piplus_{\xi_0 + PD[\mu]}$ is a quasi-isomorphism, even though $V_{\xi_0} = 0$.  
\end{remark}

It remains to prove one more lemma before we are able to prove Proposition~\ref{prop:3k-1-surgery-formula}.  For notation, when $3k-1$ is even we write $[\theta] = \frac{3k-1}{2}[m] = \frac{1}{2}[\lambda]$.    

\begin{lemma}\label{lem:theta}
Suppose that $Y'$ is an L-space obtained from a distance one surgery on $L(3,1)$ with $|H_1(Y')| = 3k-1$ even.  Then, 
\begin{enumerate}
\item\label{theta:V} $V_{\xi_0 + PD[\theta]} = 0$,
\item\label{theta:qi} $\Pi^+_{\xi_0 + PD[\theta]}$ is a quasi-isomorphism,
\item\label{theta:spin} $G_{Y',K'}(\xi_0 + PD[\theta])$ is a self-conjugate $\spinc$ structure on $Y'$.  
\end{enumerate}    
\end{lemma}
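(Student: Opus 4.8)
The plan is to establish \eqref{theta:spin} first, since it is purely homological, and then to deduce \eqref{theta:V} and \eqref{theta:qi} from the conjugation symmetry of the mapping cone together with the L-space hypothesis.

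For \eqref{theta:spin}, just as in Lemma~\ref{lem:spin-surg} the statement is homological, so it suffices to verify it for the model knot $K_U$. There the mapping cone is the one for $\tfrac{3k-1}{3}$-surgery on the unknot in $S^3$, and there is an affine isomorphism $g\colon\relspinc(Y,K_U)\to\Z$ with $g(\xi_0)=1$, $\Aplus_\xi=\mathcal A^+_{\lfloor g(\xi)/3\rfloor}$, and $V_\xi=\mathcal V_{\lfloor g(\xi)/3\rfloor}$. Since $g$ is affine over the identification $H_1(M)\cong\Z$ sending $[m]\mapsto\pm1$ and $[\theta]=\tfrac{3k-1}{2}[m]$, one computes $g(\xi_0+PD[\theta])\equiv\tfrac{3k+1}{2}\pmod{3k-1}$; under the induced identification $\spinc(Y')\cong\Z/(3k-1)$, the class $G_{Y',K'}(\xi_0+PD[\theta])$ is thus $\tfrac{3k+1}{2}=\tfrac{p+q-1}{2}$ with $(p,q)=(3k-1,3)$, which is self-conjugate by \eqref{eq:spin-formula}. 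For $K_U$ one also has $\mathcal V_{(k-1)/2}=0$, so the grading shift carried by $\Aplus_{\xi_0+PD[\theta]}$ in the cone---which depends only on the homology class of the knot---equals $d(L(3k-1,3),\tfrac{3k+1}{2})$, and \eqref{eq:d-lens} evaluates this to $\tfrac14$; this is the identity behind the value asserted in the ``moreover'' clause of Proposition~\ref{prop:3k-1-surgery-formula}.

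For \eqref{theta:V} and \eqref{theta:qi}, by \eqref{theta:spin} the $\spinc$ structure $G_{Y',K'}(\xi_0+PD[\theta])$ is self-conjugate, so---by conjugation invariance of the cobordism maps and Proposition~\ref{prop:Vs=H-s}---the equivalence class of $\xi_0+PD[\theta]$ is invariant under the involution of the mapping cone interchanging $\Aplus_{\xi_0+PD[\theta]+j\cdot PD[\lambda]}$ with $\Aplus_{\xi_0-PD[\theta]-j\cdot PD[\lambda]}$, swapping $\vplus\leftrightarrow\hplus$, and acting as the identity on homology. Running the dimension count from the proof of Lemma~\ref{lem:proj-qi} with this symmetry shows that $\vplus_{\xi_0+PD[\theta]+j\cdot PD[\lambda]}$ is a quasi-isomorphism for all $j\ge1$, hence that $\hplus_{\xi_0+PD[\theta]+j\cdot PD[\lambda]}$ is one for all $j\le-2$; collapsing the resulting acyclic subquotients reduces $\Xplus_{\xi_0+PD[\theta]}$, up to quasi-isomorphism, to the two-step complex
\[
\Aplus_{\xi_0+PD[\theta]}\oplus\Aplus_{\xi_0-PD[\theta]}\xrightarrow{\,(\vplus_{\xi_0+PD[\theta]},\,\hplus_{\xi_0-PD[\theta]})\,}\Bplus_{\xi_0+PD[\theta]},
\]
whose three summands all have homology $\T^+$ and whose two structure maps both have order $V_{\xi_0+PD[\theta]}$ on the towers, the latter by Proposition~\ref{prop:Vs=H-s}. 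Computing the homology of this complex, it is $\T^+$ exactly when $V_{\xi_0+PD[\theta]}=0$ and otherwise carries $U$-torsion; since $Y'$ is an L-space, $H_*(\Xplus_{\xi_0+PD[\theta]})\cong\T^+$, forcing $V_{\xi_0+PD[\theta]}=0$, which is \eqref{theta:V}. Then $\vplus_{\xi_0+PD[\theta]}$ is a quasi-isomorphism, so cancelling it against $\Bplus_{\xi_0+PD[\theta]}$ identifies $\Xplus_{\xi_0+PD[\theta]}$ with $\Aplus_{\xi_0-PD[\theta]}$, whence $\Pi^+_{\xi_0-PD[\theta]}$ is a quasi-isomorphism; applying the conjugation involution, which is an isomorphism of complexes carrying $\Pi^+_{\xi_0-PD[\theta]}$ to $\Pi^+_{\xi_0+PD[\theta]}$, gives \eqref{theta:qi}.

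I expect the main obstacle to be the adaptation of Lemma~\ref{lem:proj-qi}. There the conjugation symmetry fixes the central complex $\Aplus_{\xi_0}$, so the dimension count pins down $V_{\xi_0}$ at once; here the symmetry has no fixed summand, so that argument controls only the ``outer'' maps. Isolating the innermost invariant $V_{\xi_0+PD[\theta]}$ then requires the separate input that the reduced two-step cone has torsion-free homology, which is where the L-space hypothesis must be used a second time.
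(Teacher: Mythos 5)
Your proposal is correct in substance, but for parts (\ref{theta:V}) and (\ref{theta:qi}) it takes a genuinely different route from the paper. The paper proves (\ref{theta:V}) directly at the hat level (if $V_{\xi_0+PD[\theta]}>0$ then also $H_{\xi_0-PD[\theta]}>0$, so $\Xhat^N_{\xi_0+PD[\theta]}$ splits into three odd--Euler-characteristic pieces, contradicting the L-space condition), and proves (\ref{theta:qi}) by injecting model-knot input: since $H_\xi-V_\xi$ is determined homologically, the $K_U$ computation gives $H_{\xi_0+PD[\theta]}>V_{\xi_0+PD[\theta]}\ge 0$ for $k\ge 3$, which splits off a finite summand containing $\Ahat_{\xi_0\pm PD[\theta]}$ and $\Bhat_{\xi_0+PD[\theta]}$, with $k=1$ handled by a separate splitting. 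You instead use only the L-space hypothesis plus Proposition~\ref{prop:Vs=H-s} to force all outer maps $\vplus_{\xi_0+PD[\theta]+j\cdot PD[\lambda]}$ ($j\ge 1$) and $\hplus_{\xi_0+PD[\theta]+j\cdot PD[\lambda]}$ ($j\le -2$) to be quasi-isomorphisms, collapse to the central two-step cone, and then read off (\ref{theta:V}) from its $U$-torsion and (\ref{theta:qi}) by cancellation; this is uniform in $k$ (no separate $k=1$ case) and uses the model knot only for (\ref{theta:spin}), where you reduce to $K_U$ and invoke \eqref{eq:spin-formula}, whereas the paper argues (\ref{theta:spin}) directly from $2[\theta]=[\lambda]$ and the equivariance \eqref{eq:G-equivariance}. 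Both mechanisms work, and yours is arguably cleaner for (\ref{theta:qi}).

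Two points need care. First, your ``dimension count'' closes only if it is run exactly as in Lemma~\ref{lem:proj-qi}, i.e.\ with the quotient onto the single summand $\Ahat_{\xi_0+PD[\theta]}$: its kernel is a direct sum of two Euler-characteristic-zero zigzags $L$ (the summands with $j\le -1$, together with $\Bhat_{\xi_0+PD[\theta]}$) and $R$ (those with $j\ge 1$), and by Proposition~\ref{prop:Vs=H-s} the partner of each outer $\vhat$ in $R$ is an $\hhat$ in $L$ and vice versa, so one vanishing outer map forces $\dim H_*(L)\ge 2$ and $\dim H_*(R)\ge 2$, exceeding the bound $2$ from the exact triangle. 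If one instead quotients onto the three-term central piece --- which your description of a chain-level ``involution'' suggests --- the count $4\le 1+3$ does not produce a contradiction without already knowing $V_{\xi_0+PD[\theta]}=0$, so the argument would become circular. Relatedly, the conjugation symmetry should only be used through the numerical identity of Proposition~\ref{prop:Vs=H-s}; it is not an isomorphism of the complex $\Xplus_{\xi_0+PD[\theta]}$, so your last step transporting the quasi-isomorphism $\Piplus_{\xi_0-PD[\theta]}$ to $\Piplus_{\xi_0+PD[\theta]}$ is not justified as written. The fix is immediate: once $V_{\xi_0+PD[\theta]}=H_{\xi_0-PD[\theta]}=0$, cancel the acyclic subcomplex $\Aplus_{\xi_0-PD[\theta]}\xrightarrow{\hplus}\Bplus_{\xi_0+PD[\theta]}$ instead, and the resulting quotient map is precisely $\Piplus_{\xi_0+PD[\theta]}$.
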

\begin{proof}
\eqref{theta:V} Suppose that $V_{\xi_0 + PD[\theta]} > 0$.  By Proposition~\ref{prop:Vs=H-s}, we have that $H_{\xi_0 - PD[\theta]} > 0$.  Following the same arguments as in Lemma~\ref{lem:proj-qi} and \ref{lem:proj-qimu}, we see that $\Xhat^N_{\xi_0 + PD[\theta]}$ splits, up to homotopy, into a direct sum of three complexes with odd Euler characteristic.  This contradicts the fact that $Y'$ is an L-space.  

\eqref{theta:qi} We first deal with the case of $k \geq 3$.  It suffices to show that $H_{\xi_0 + PD[\theta]}$ and $V_{\xi_0 - PD[\theta]}$ are positive, because in that case, up to homotopy, $\Xhat_{\xi_0 + PD[\theta]}$ splits off a summand containing $\Ahat_{\xi_0 - PD[\theta]}, \Ahat_{\xi_0 + PD[\theta]}$ and $\Bhat_{\xi_0 + PD[\theta]}$ whose homology is necessarily rank one. Since $V_{\xi_0 + PD[\theta]} = H_{\xi_0 - PD[\theta]}$ by Proposition~\ref{prop:Vs=H-s}, the result follows.  By another application of Proposition~\ref{prop:Vs=H-s}, it suffices to simply establish the positivity of $H_{\xi_0 + PD[\theta]}$.  We will do this by showing that $H_{\xi_0 + PD[\theta]}$ is strictly greater than $V_{\xi_0 + PD[\theta]}$.  

Fix $n \gg 0$.  Let $\mft_*$ denote the $\spinc$ structure on $Y_{n\mu + \lambda}(K)$ such that $\Xi(\mft_*) = \xi_0 + PD[\theta]$.   Further, let $\mft_\pm$ denote $G_{Y,\pm K}(\xi_0 + PD[\theta])$ on $\spinc(Y)$.  By \cite[Theorem 7.1]{OSSmoothFour}, we have 
\begin{align*}
d(L(3,1), \mft_+)  - d(Y_{n\mu + \lambda}(K),\mft_*) &= \frac{c_1(\mfv_{\xi_0 + PD[\theta]})^2 - 3\sigma(W'_n) - 2\chi(W'_n)}{4} - 2V_{\xi_0 + PD[\theta]} \\
d(L(3,1), \mft_-)  - d(Y_{n\mu + \lambda}(K),\mft_*) &= \frac{c_1(\mfh_{\xi_0 + PD[\theta]})^2 - 3\sigma(W'_n) - 2\chi(W'_n)}{4} - 2H_{\xi_0 + PD[\theta]},
\end{align*}
since $\vplus_{\xi_0 + PD[\theta]}$ (respectively $\hplus_{\xi_0 + PD[\theta]}$) is given by the $\spinc$ cobordism map induced by $\mfv_{\xi_0 + PD[\theta]}$ (respectively $\mfh_{\xi_0 + PD[\theta]}$).  Consequently, it follows that $H_{\xi_0 + PD[\theta]} - V_{\xi_0 + PD[\theta]}$ is completely determined by homological information, so it suffices to show $H_{\xi_0 + PD[\theta]} > V_{\xi_0 + PD[\theta]}$ for our model knot $K_U$ in $L(3,1)$, described in Section~\ref{sec:mc-spinc}.
%We will require the following property of the integer invariants $\mathcal{V}_s$ for knots $S^3$ (see \cite[Lemma 2.5]{HomLidmanZufelt}), which will be applied to $U\subset S^3$.
%\begin{property} For all $s\in\Z$ the integers $\mathcal{V}_s$ and $\mathcal{V}_{-s}$ are related by $\mathcal{V}_{-s} = V_s + s$.
%\end{property}
Recall from the proof of Lemma \ref{lem:spin-surg} that $V_\xi = \mathcal{V}_{\lfloor \frac{g(\xi)}{3} \rfloor}$ and $H_\xi = \mathcal{H}_{\lfloor \frac{g(\xi)}{3} \rfloor}$, and that $g(\xi_0)=1$. Using this and \eqref{eq:Vs-unknot},  since $k \geq 3$ we have that
\begin{align}
\label{theta:Hs=Vs+s} H_{\xi_0 + PD[\theta]} = \mathcal{H}_{\lfloor \frac{3k+1}{6} \rfloor} &= \mathcal{V}_{\lfloor \frac{3k+1}{6} \rfloor} + \left \lfloor \frac{3k+1}{6} \right \rfloor \\
\nonumber &> \mathcal{V}_{\lfloor \frac{3k+1}{6} \rfloor} \\
\nonumber &= V_{\xi_0 + PD[\theta]}.  
\end{align}
This completes the proof in the case that $k \geq 3$.  

It remains to deal with the case of $k = 1$.  This will not be needed in the application of Proposition~\ref{prop:3k-1-surgery-formula}, but we include it for completeness.  In this case, $[\lambda] = 2[m]$ and $[\theta] = [m]$.  While we do not have the strict inequality of  \eqref{theta:Hs=Vs+s}, a similar computation shows that 
$$V_{\xi_0 - PD[\theta] - PD[\lambda]} = H_{\xi_0 + PD[\theta] + PD[\lambda]} = V_{\xi_0 + PD[\theta] + PD[\lambda]} + 1 > 0.$$  
From this, it follows that $\Xhat_{\xi_0 + PD[\theta]}$ splits off, up to homotopy, the summand in Figure~\ref{fig:theta}.  
\begin{figure}
\[
\xymatrix{\Ahat_{\xi_0 - 3PD[m]} \ar[dr] & \Ahat_{\xi_0 - PD[m]} \ar[d] \ar[dr] & \Ahat_{\xi_0 + PD[m]} \ar[d] \ar[dr] & \Ahat_{\xi_0 + 3PD[m]} \ar[d] \\
& \Bhat_{\xi_0 - PD[m]} & \Bhat_{\xi_0 + PD[m]} & \Bhat_{\xi_0 + 3PD[m]}
}
\]
\caption{When $k = 1$, up to homotopy, $\Xhat_{\xi_0 + PD[\theta]}$ splits off the summand shown above.}\label{fig:theta}
\end{figure}
Using Proposition~\ref{prop:Vs=H-s}, we can apply similar arguments to Lemmas~\ref{lem:proj-qi} and ~\ref{lem:proj-qimu} to deduce that $\Pihat_{\xi_0 + PD[\theta]}$ is a quasi-isomorphism, which is sufficient.

\eqref{theta:spin} Recall that $G_{Y',K'}(\xi_0)$ is self-conjugate by Lemma~\ref{lem:spin-surg}.    Further, in $H^2(Y')$ we have that $i^*PD[\theta] = -i^*PD[\theta]$.  From Equation~\eqref{eq:G-equivariance} we obtain
\begin{align*}
G_{Y',K'}(\xi_0 + PD[\theta]) &= G_{Y',K'}(\xi_0) + i^*PD[\theta] \\
&= \overline{G_{Y',K'}(\xi_0)} - i^*PD[\theta] \\
&= \overline{G_{Y',K'}(\xi_0 + PD[\theta])}.
\end{align*}
\begin{comment}
OLD ARGUMENT
%Similarly to Lemma~\ref{lem:spin-surg}, this is a purely homological statement, so it suffices to establish this for the model knot $K_U$.  Furthermore, since Lemma~\ref{lem:spin-surg} has shown that $G_{Y',K'_U}(\xi_0)$ corresponds to $i = 1$ on $L(3k-1,3)$, we must have that $G_{Y',K'_U}(\xi_0 + PD[\theta])$ corresponds, up to $\spinc$-conjugation, with $i = \frac{3k+1}{2}$.  However, this is self-conjugate on $L(3k-1,3)$ by \eqref{eq:spin-formula}.
\end{comment}
\end{proof}

\subsection{The proofs of the surgery formulas}\label{sec:mc-proof}

\begin{proof}[Proof of Proposition~\ref{prop:3k-1-surgery-formula}]
We first establish Equation~\eqref{eq:3k-1-0}.  Suppose that $Y'$ is an L-space obtained from a distance one surgery on a knot in $L(3,1)$, where $|H_1(Y')| = 3k - 1 > 0$.  We would like to see that if $\mft = G_{Y',K'}(\xi_0)$, then,
\[
d(Y',\mft) = d(L(3k-1,3),1) - 2V_{\xi_0}.  
\] 
By Lemma~\ref{lem:spin-surg}, we know that $\mft$ is self-conjugate, so this will give the desired result.  

Since $\Piplus_{\xi_0}$ is a quasi-isomorphism (Lemma~\ref{lem:proj-qi}), the $d$-invariant of $Y'$ in the $\spinc$ structure $G_{Y',K'}(\xi_0)$ is computed by the minimal grading of a non-zero element of $H_*(\Aplus_{\xi_0})$, after the appropriate grading shift mentioned above Theorem~\ref{thm:mappingcone}.  Notice that before this grading shift, this minimal grading in $H_*(\Aplus_{\xi_0})$ is given by exactly $d(Y,G_{Y,K}(\xi_0)) - 2V_{\xi_0}$.  As described in \cite[Section 7.2]{OSRational}, the absolute grading shift on the mapping cone depends only on homological information, not on the isotopy type of the knot.  Let $\sigma(\xi)$ denote the grading shift applied to $\Aplus_\xi$ in the mapping cone formula, which does not depend on $K$.   In particular,
\begin{equation}\label{eq:grading-shift}
d(Y',G_{Y',K'}(\xi_0)) = d(Y,G_{Y,K}(\xi_0)) - 2V_{\xi_0} + \sigma(\xi_0).
\end{equation}

Consider the case of the knot $K_U$.  By the proof of Lemma~\ref{lem:spin-surg}, we have that $Y' = L(3k-1,3)$, $V_{\xi_0} = 0$, and $1$ corresponds with the $\spinc$ structure $G_{Y',K'}(\xi_0)$.  Consequently, 
\[
\sigma(\xi_0) = d(L(3k-1,3),1) - d(Y,G_{Y,K}(\xi_0)).
\]
For a knot $K \subset Y$ satisfying the hypotheses of the proposition, \eqref{eq:grading-shift} now implies    
\[
d(Y', \mft) = d(L(3k-1,3),1) -  2V_{\xi_0}.
\]
This establishes \eqref{eq:3k-1-0}.

The proof of Equation~\eqref{eq:3k-1-1} now follows the same strategy.  The only changes to the argument are that Lemma~\ref{lem:proj-qi} is replaced by Lemma~\ref{lem:proj-qimu} and Remark~\ref{rmk:unknot-ok}, and we must use that the $\spinc$ structure on $L(3k-1,3)$ given by $G_{Y',K'_U}(\xi_0 + PD[\mu])$ corresponds, up to $\spinc$-conjugation, with $4$.  To see this final claim, we use \cite[Section 6]{CochranHorn}\footnote{What is denoted as $[m]$ in this article is denoted $[\mu]$ in the notation of \cite{CochranHorn}.  Conveniently, the instances of $k$ used in each article agree in the case of $L(3k-1,3)$.}, where it is shown that 
\begin{comment}
\[
\mfs_i - \mfs_j = \pm (i-j)k \cdot PD[m]
\]   
\end{comment}
the difference of the $\spinc$ structures corresponding to $i, j$ on $L(3k-1,3)$ is $\pm i^*((i-j)k \cdot PD[m]) \in H^2(L(3k-1,3))$.  (This is true even in the case that $3k - 1 = 2 < 3$.)  Since  $[\mu] = 3[m]$ and $G_{Y',K'_U}(\xi_0)$ is self-conjugate on $L(3k-1,3)$, we have the desired claim.  Since $G_{Y',K'_U}(\xi_0)$ corresponds to $i = 1$ on $L(3k-1,3)$, the claim follows.

Finally, we must establish that if $d(Y',\tilde{\mft}) \neq \frac{1}{4}$ for a self-conjugate $\spinc$ structure $\mft$, then $\mft = \tilde{\mft}$.  This only requires proof in the case that $3k-1$ is even.  Either $\tilde{\mft}=\mft$ or $\tilde{\mft} = G_{Y',K'}(\xi_0 + PD[\theta])$ by Lemmas~\ref{lem:spin-surg} and ~\ref{lem:theta}.  Applying the same argument as in the above cases, it follows from Lemma~\ref{lem:theta} that 
\[
d(Y', G_{Y',K'}(\xi_0 + PD[\theta])) = d(L(3k-1,3),\frac{3k+1}{2}) = \frac{1}{4}.
\]
Since $d(Y',\mft) \neq d(Y',G_{Y',K'}(\xi_0 + PD[\theta]))$ by assumption, we have that $\mft = \tilde{\mft}$.  
\end{proof}

\begin{proof}[Proof of Proposition~\ref{prop:3k+1-surgery-formula}]
The proof follows similarly to that of Proposition~\ref{prop:3k-1-surgery-formula}.  The main issue is that, as described in Lemma~\ref{lem:cobordism}, the two-handle attachment from $L(3,1)$ to $Y'$ is negative-definite instead of positive-definite.  Therefore, we must reverse orientation in order to obtain a positive-definite cobordism from $-L(3,1)$ to $-Y'$.  We now can repeat the arguments as before nearly verbatim, including Sections~\ref{sec:mc-preliminaries}-\ref{sec:mc-truncation}.  The only change is the ``model'' computation, which comes from the link $L^J$ as in Section~\ref{sec:mc-spinc}, where we use $-3$-surgery on $L_2$ and $+k$-surgery on $L_1$.  A slam-dunk shows that in the case of $J = U$, the result is $L(3k+1,3)$.  Repeating the arguments for Proposition~\ref{prop:3k-1-surgery-formula}, we obtain the terms coming from $L(3k+1,3)$ and $-Y'$.
\end{proof}

\section{Relevance of Theorem \ref{thm:main} and Corollary~\ref{cor} to DNA topology}
\label{sec:applications}
In subsection \ref{subsec:applications-band} we first give precise definitions of coherent and non-coherent band surgery and discuss implications of Theorem \ref{thm:main} and Corollary \ref{cor}. In subsection \ref{subsec:applications-DNA} we discuss the biological motivation for our specific focus on the trefoil and other $T(2, n)$ torus links.

\subsection{Modeling local reconnection by band surgery}\label{subsec:applications-band}
If $L$ is a link in the three-sphere, then a band $b:I\times I\rightarrow S^3$ is an embedding of the unit square such that $L\cap b(I\times I) = b(I \times \partial I)$. Two links $L_1$ and $L_2$ are related by a band surgery if 
$L_2 = ( L_1 - b(I\times \partial I) ) \cup  b(\partial I \times I)$. 
%If $L_1$ and $L_2$ are oriented, and the band surgery is compatible with these orientations, it is called a \emph{coherent} band surgery, otherwise it is called \emph{non-coherent}.
If $L_1$ and $L_2$ are oriented, and the orientation of $L_1 - b(I\times\partial I)$ is consistent with the orientations of both $L_1$ and $L_2$, then the band surgery is called \emph{coherent}. Otherwise, the band surgery is \emph{non-coherent}.\footnote{Note this definition does not imply the induced surface cobordism from $L_1$ to $L_2$ is orientable (respectively, non-orientable). For example, given a coherent band surgery from a two-component link to a knot, one may obtain a non-coherent band surgery via the same band move by reversing the orientation of one of the link components.} 
See Figure \ref{fig:resolutions}. 
Figure \ref{fig:bandings-1} illustrates non-coherent bandings transforming a knot to another knot. 
Note that a coherent band surgery necessarily changes the number of components of a link, as shown in Figure \ref{fig:bandings-2}. 
%%%After crossing $L_1$ with an interval to obtain an annulus, a band surgery induces a surface cobordism between the links $L_1$ and $L_2$. If you're dealing with a single component knot, then coherent band surgeries correspond with orientable surface cobordisms and non-coherent band surgeries correspond with non-orientable surface coboridms. But if you have a link with two or components this isn't the case; take for example an unlink with different orientations on one component. There is a non-coherent band surgery (it's incompatible) which changes the number of link components but the surface cobordism is orientable, because of course there exists and orientation on the unlink for which it is compatible.

Write $(S^3, L_i) = (B, t_i) \cup (B', t')$, where $S^3 = B\cup B'$ is the union of two three-balls, with the sphere $\partial B = \partial B'$ intersecting $L_i$ transversely in four points, and where $t_i=(B \cap L_i)$ and  $t' = (B'\cap L_i)$. Here $(B, t_i)$ and $(B', t')$ are two-string tangles. It is often convenient to isotope $L_1$ and $L_2$ so that a coherent or non-coherent band surgery can be expressed as the replacement of a rational $(0)$ tangle by an $(\infty)$ or $(\pm 1/n)$ tangle (Figure \ref{fig:moreresolution}). When $n$ is small, these tangles have special relevance in biology (see for example \cite{Sumners1995, VazquezSumners, Vazquez2005, Shimokawa, Stolz2017}). For example, in the context of DNA recombination, the local reconnection sites correspond to the core regions of the recombination sites, i.e. two very short DNA segments where cleavage and strand-exchange take place. Thus these tangle replacements and the corresponding band surgeries appropriately model the recombination reaction.  Note that $(B, t_1)$ is replaced with $(B, t_2)$ leaving $(B', t')$ fixed. In terms of the resulting tangle decomposition, this simplification comes at the expense of complicating the outside tangle $(B', t')$.   
 
\begin{figure}
\includegraphics[height = 0.7in]{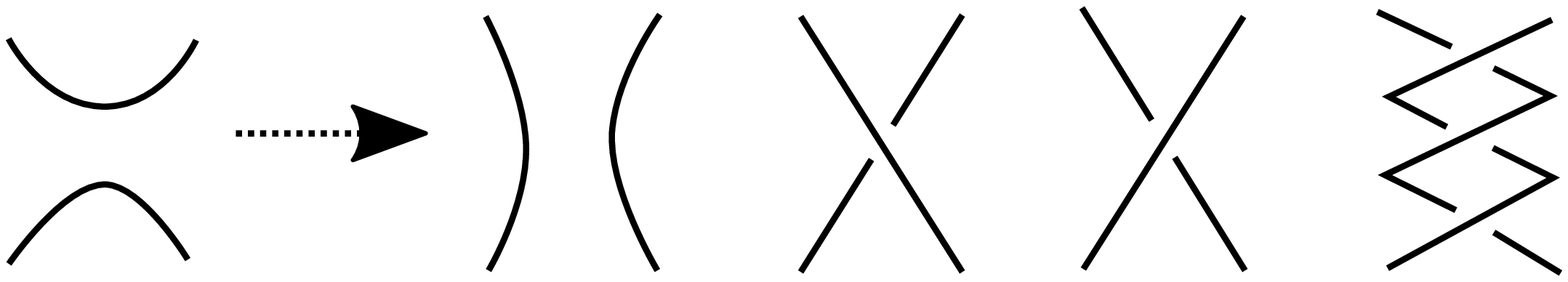}
\caption{Examples of rational tangle replacements. We model local reconnection as a tangle replacements, such as those pictured. We typically assume low-crossing tangle replacements. Any additional topological complexity surrounding the reconnection sites is pushed to the outside tangle, which remains fixed during reconnection. }
\label{fig:moreresolution}
\end{figure}

The double cover of $B'$ branched over $t'$ is a compact, connected, oriented 3-manifold $M$ with torus boundary. The manifold $M$ may also be obtained as $\Sigma(L_1)-\mathcal{N}(K)$, where we write $\Sigma(L)$ to denote the double cover of $S^3$ branched over $L$, and the knot $K$ is the lift of a properly embedded arc arising as the core of the band; the latter perspective has been adopted throughout the current article. Both $\Sigma(L_1)$ and $\Sigma(L_2)$ are obtained by Dehn fillings of $M$, and the Montesinos trick \cite{Montesinos} implies that these fillings are distance one. One such example is illustrated in Figure \ref{fig:tangle}. For this reason, Theorem \ref{thm:main} immediately provides an obstruction to the existence of band surgeries between the right-handed trefoil knot and the torus link $T(2,n)$ for $n\neq \pm 1, \pm 2, 3, 4, -6$, and 7. In section \ref{subsec:applications-DNA}, we present examples from the literature where most of the exceptional cases have been observed in DNA recombination reactions involving the trefoil. 
\begin{figure}
\begin{center}
%\begin{tiny}
\begin{tikzpicture}

\node[anchor=south west,inner sep=0] at (0,0) {\includegraphics[width=4.2in]{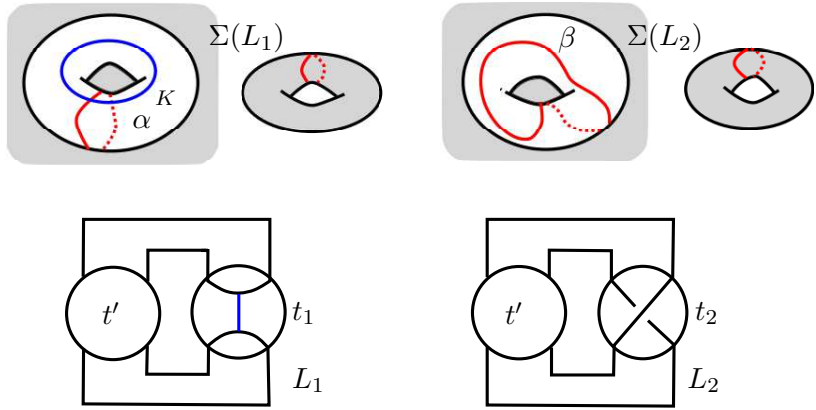}};

%\draw[help lines] (0,0) grid (10,6);
%% This is done by drawing a label above and right
%% of the vertex you're labeling, rather than giving
%% the coordinates of the label itself.
%% Alternately, if you wanted to position the label
%% yourself, you could use a command like:

\node[label=above right:{$L_1$}] at (3.5,0.0){};
\node[label=above right:{$L_2$}] at (8.7,0.0){};
\node[label=above right:{\scriptsize{$K$}}] at (1.7,3.8){};'''
\node[label=above right:{$\alpha$}] at (1.4,3.5){};
\node[label=above right:{$\beta$}] at (7.0,4.5){};
\node[label=above right:{$\Sigma(L_1)$}] at (2.4,4.5){};
\node[label=above right:{$\Sigma(L_2)$}] at (7.9,4.5){};
\node[label=above right:{$t'$}] at (1.0,0.9){};
\node[label=above right:{$t'$}] at (6.3,0.9){};
\node[label=above right:{$t_1$}] at (3.5,0.9){};
\node[label=above right:{$t_2$}] at (8.8,0.9){};

\end{tikzpicture}
%\end{tiny}
\end{center}
\caption{An example of a rational tangle replacement realizing a band surgery, together with the corresponding lift to the branched double cover. The blue arc properly embedded in the complement of $L_1$ lifts to the blue knot $K$ in $\Sigma(L_1)$. The exterior tangle $(B', t')$ is of arbitrary complexity and lifts to $M$, depicted as the open shaded area. Dehn fillings along the curves $\alpha$ and $\beta$ yield $M(\alpha) = \Sigma(L_1)$ and $M(\beta)=\Sigma(L_2)$, respectively. When drawn on the same boundary torus, $\alpha$ and $\beta$ intersect geometrically once.}
\label{fig:tangle}
\end{figure}

Coherent band surgery is better understood than the non-coherent case (see for example \cite{IshiharaShimokawa, ISV, BuckIshihara2015, BIRS, DIMS, Shimokawa}). If a coherent band surgery decreases the maximal Euler characteristic of an oriented surface without closed components bounding the link, it is known that the band can be isotoped to lie onto a taut Seifert surface \cite{ST}, \cite[Theorem 1.6]{HS}. Thus minimal Seifert surfaces can sometimes be used to obstruct the existence of coherent band surgeries or characterize the tangle decompositions that yield existing surgeries \cite{DIMS, BuckIshihara2015, BIRS}. As non-coherent band surgery is an unoriented operation, such techniques are not immediately available. There are several obstructions to the existence of a non-coherent band surgery coming from certain evaluations of the Jones or Q-polynomials \cite{AbeKanenobu}, %but these are determined by the dimensions of $H_1(\Sigma(K);\Z/3)$ and $H_1(\Sigma(K);\Z/5)$, 
but these are not helpful in the present case. A theorem of Kanenobu \cite[Theorem 2.2]{Kanenobu} implies that if a knot or link $L$ is obtained from an unknotting number one knot $K$ by a coherent or non-coherent band surgery, then either $2\det(L)$ or $-2\det(L)$ is a quadratic residue of $\det(K)$. Because this condition is always true when $\det(K)=3$, the obstruction is inapplicable in the case of the trefoil. Theorem \ref{thm:main} provides new obstructions to the existence of both coherent and non-coherent band surgeries along the trefoil.

\subsection{Relevance to DNA topology}\label{subsec:applications-DNA}

DNA is a nucleic acid that carries the genetic code of an organism. In its most common form, the B-form, DNA is a right-handed double helix with two sugar-phosphate backbones lined up by nitrogeneous bases A, T, C and G. The sequence of bases determines the genetic code. The bases along one backbone are complementary to the bases along the second backbone, and are held together via hydrogen bonds between A and T and between C and G. The length of a DNA molecule is measured in the number of nucleotides, or base-pairs (bp). For example, the genomes of viruses such as bacteriophages can be fairly short, while the circular chromosome of {\it Escherichia coli} ranges from 4.5 to 5.5 million bp, and the human genome is approximately 3 billion bp long.

\paragraph{\it Importance of $T(2, n)$ torus knots and links in recombination.} In the early 1960s, the Frisch-Wasserman-Delbr\"{u}ck Conjecture \cite{FW, DM} stated that in long polymer chains knots would occur with almost sure certainty. The conjecture has been proved for various polymer models  \cite{SW,D1,DPS}. It was also verified experimentally on randomly circularized DNA chains \cite{Liu81, Rybenkov93, Shaw93, Arsuaga02}. The high knotting probability is accentuated when the polymer chains occur in confined volumes, such as a long chromosome inside a viral capsid or in a cell nucleus. In studies dealing with geometry and topology of long DNA molecules, double-stranded DNA is modeled as the curve drawn by the axis of the double-helix. Experimental and numerical work of closed polymer chains in open space clearly indicate that {\it the most probable knot is the trefoil knot}. 

In addition to the trefoil knot, other $T(2,n)$ torus knots and links are especially relevant in biology as illustrated by the effects of replication on circular DNA. In our current understanding, the tree of life consists of three domains: Archaea, Bacteria and Eukarya. Bacteria and Archaea have circular chromosomes. The process of DNA replication on a circular chromosome, whereby the cell produces a copy of its genome in preparation for cell division, yields two interlinked daughter DNA molecules. The two-component links defined by the axes of the DNA double-helices have been shown experimentally to be $T(2, n)$ torus links \cite{AdamsCozz92}. Note that this is a consequence of the right-handed double-helical structure of DNA. The two components must be unlinked to ensure survival of the next generation of cells. Typically the unlinking is mediated by type II topoisomerases, enzymes that introduce a double-stranded break and mediate strand-passage. The local action of type II topoisomerases can be modeled as a crossing change. However, Grainge {\it et al.} \cite{Grainge2007} showed that unlinking of replication links can also be mediated by recombination and proposed an unlinking mechanism by local reconnection where each $T(2,n)$ torus link was converted to a $T(2, n-1)$ torus knot, and each $T(2, n-1)$ torus knot was converted to a $T(2,n-2)$ torus link. In \cite{Shimokawa} it was proved that this mechanism of stepwise unlinking is the only possible pathway that strictly reduces the complexity (measured as the minimal crossing number) of the DNA substrates at each step. More recently, using a combination of analytical and numerical tools, \cite{Stolz2017} showed that even when no restrictions are imposed on the reduction in crossing number, the stepwise mechanism proposed in \cite{Grainge2007} is the most likely. These examples underscore the importance of understanding any topological transitions between torus knots and links, including the trefoil.
 
\paragraph{\it Band surgery as a model for DNA recombination.} As was previously mentioned, the local action of recombination enzymes can be thought of as a simple reconnection and can be modeled mathematically as band surgery. The reconnection sites are two short, identical DNA segments (typically 5-50bp long). They usually consist of a non-palindromic sequence of nucleotides and we can therefore assign an unambiguous orientation to each site. Two reconnection sites in a single circular chain may induce the same orientation along the chain, in which case they are said to be in {\it direct repeats}. If the sites induce opposite orientations into the chain, they are said to be in {\it inverted repeats} (Figure \ref{fig:SiteOrientation}). When the substrate is a knot with two directly repeated sites, reconnection yields a product with two components, which may be non-trivially linked. This process corresponds to a coherent band surgery. Conversely, if the substrate is a two-component link with one site on each component, the product is a knot with two directly repeated sites. When the substrate is a knot with two inversely repeated sites, the product is a knot with the same site orientation. This corresponds to a non-coherent band surgery. 
 
Two-string tangle decompositions are commonly used to model enzymatic complexes attached to two segments along a circular DNA molecule. The topology of the product depends on the global conformation adopted by the substrate prior to reconnection. Therefore understanding the outside tangle $(B',t')$ is crucial to an accurate description of the enzymatic reaction. When the tangles involved are rational or sums of two rational tangles, there is a well-known combinatorial technique, called the \emph{tangle calculus}, which allows one to solve systems of tangle equations related to an enzymatic action and thus infer mechanisms of the enzymes. The tangle method was first proposed by Ernst and Sumners in \cite{ES-1} and is now standard in the toolkit of DNA topologists. Coherent and non-coherent band surgeries fit easily into this framework.

{\it Site-specific recombination experiments consistent with Corollary \ref{cor}.} %kb kiloboase = 1,000 nucleotides
DNA recombination events occur often in the cellular environment since they are needed for repair of double-stranded breaks, a deleterious form of DNA damage. Most of the time double-strand breaks are properly repaired by a process called homologous recombination, and no visible changes are present on the DNA at the end of the process. However, sometime homologous recombination results in local reconnection, also called {\it cross-over} in the biological literature. Studying topological changes related to homologous recombination is difficult due to the length of the DNA substrates. Site-specific recombination is another recombination process that has been extensively studied from the topological point of view. Site-specific recombinases are important to a variety of naturally occurring processes and genetic engineering techniques, such as the integration or excision of genetic material \cite{gottesman1971}, dimer resolution \cite{Stirling1988}, or the regulation of gene expression via inversion \cite{heichmanjohnson1990}. The trefoil knot and other $T(2,n)$ torus knots and links have commonly been used as substrates, or observed as products of site-specific recombination. In the next few paragraphs, we survey a few examples from the literature specific to Corollary \ref{cor}. Recall that site-specific recombinases target short DNA sequences called recombination sites. By convention the names of the sites are short words indicated in italics (e.g. \emph{att, dif, psi, res}). The names of the enzymes are capitalized (e.g. Xer, $\lambda$-Int, Gin, Hin). 
 
In Spengler {\it et al.} \cite{Spengler85} the authors incubated a 9.4 kilobase (kb) negatively supercoiled DNA plasmid containing two inversely repeated {\it att} recombination sites, with the integrase $\lambda$-Int from bacteriophage $\lambda$. The products were knots with odd number of crossings, and their gel migration was consistent with that of torus knots. The analogous experiment with plasmids carrying two directly repeated {\it att} sites yielded two-component links with even number of crossings. Crisona {\it et al.} \cite{CrisonaCozz99} confirmed that all products of $\lambda$-Int recombination on unknotted substrates with two recombination sites are right-handed torus knots (in the inverted repeat case) or torus links (in the direct repeat case) of the form $T(2,n)$.
%ADD REF: Crisona NJ, Weinberg RL, Peter BJ, Sumners DW, Cozzarelli NR.,The topological mechanism of phage lambda integrase. J Mol Biol. 1999 Jun 18;289(4):747-75.

There are many instances of coherent bandings in the biological literature. For example, in \cite{BenjaminCozz1990}, a 7kb substrate with two {\it att} sites in direct repeat and two {\it res} sites in direct repeat incubated with $\lambda$-Int produced right-handed torus links with antiparallel {\it res} sites. Links with 4, 6, 8, 10 crossings were observed. These links were then incubated with another enzyme, the Tn3 resolvase. In this study the trefoil knot clearly appeared as a product of resolvase recombination on a right-handed four and six-crossing torus link with two sites in anti-parallel orientation\footnote{These are the links $4^{2'}_1$ and $6^{2'}_1$, respectively, using the nomenclature convention from \cite{Stolz2017}.}. The trefoil obtained is predicted to be a negative trefoil, which is left-handed. This transition is the mirror to the transition between $T(2,3)$ and $T(2,-6)$ from Corollary \ref{cor}.

The Xer site-specific recombination system is a good source of examples relevant to the results from Corollary \ref{cor}. In the cell, Xer recombination is known to act at two directly repeated {\it dif} sites along the bacterial chromosome to resolve chromosomal dimers, and has been shown to unlink replication links \cite{Grainge2007}. The enzymatic action is consistent with a stepwise unlinking pathway \cite{Grainge2007, Shimokawa}. In this pathway any $T(2,4)$ link with parallel sites is converted to a $T(2,3)$ knot and any $T(2,3)$ knot is converted to a $T(2,2)$ link. In a different reaction, Xer recombination at two {\it psi} sites in direct repeats converts an unknot to a $T(2,4)$ link with anti-parallel sites (see Figure \ref{fig:SiteOrientation})\cite{Vazquez2005}. The {\it psi} sites are 28bp long and consist of an 11bp XerC binding region, an 11bp XerD binding region, a 6bp asymmetric central region, and a 160bp accessory sequence adjacent to the XerC binding site. In Bregu {\it et al.} \cite{BreguSherratt2002}, the 28bp core region of the {\it psi} site was inverted with respect to the accessory sequence.  This allowed the authors to mediate Xer recombination on sites in inverted repeats, i.e. the non-coherent case. The reaction converted an unknot to a trefoil $T(2,3)$.

\begin{figure}
\includegraphics[height = 1.4in]{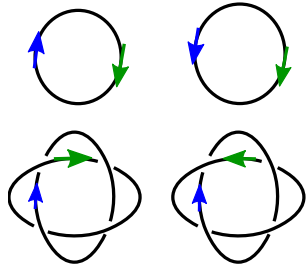}
\caption{(Upper) Relative orientations of the reconnection sites. The figure shows an unknotted chain with two sites in direct repeats (left) and an unknot with two sites in inverted repeats (right). (Lower) $T(2,n)$ torus links with parallel orientation of the strands and linking number $+n/2$ (left) and antiparallel orientation of the strands and linking number $-n/2$ (right). }
\label{fig:SiteOrientation}
\end{figure}

Non-coherent bandings have been observed experimentally in the action of several other site-specific recombinases. Noteworthy are enzymes Gin and Hin. Gin is a site-specific recombinase from bacteriophage Mu used to change the genetic code of the viral genome by inverting one of the DNA arcs, called the G-segment, bound by the recombination sites \cite{VazquezSumners}. Gin acts processively, i.e. it performs several rounds of recombination before releasing its substrate. In the first round, when acting on an unknotted DNA circle with sites in inverted repeat, Gin produces an unknot with an inverted G-segment, and in the second round the unknot is turned into a trefoil, and the original genetic sequence is restored. By a similar mechanism, Hin converts its unknotted substrate to a trefoil \cite{BuckMauricio}. Hin does not change the genetic code of the DNA. These examples illustrate the transition between $T(2,3)$ and $T(2,1)$. 

In sum, we have presented examples from the literature where some of the exceptional cases from Theorem \ref{thm:main} have been observed. In particular transitions between the right-handed trefoil knot and the torus links $T(2,n)$ for $n= \pm 1, \pm 2, 4$, and between the left-handed trefoil and the torus links $T(2,-4)$ and $T(2,6)$ have been reported. We note that in the non-coherent case, the transitions observed were from the unknot to the trefoil. Transitions from the trefoil to the trefoil, and from the trefoil to $T(2,7)$ are probably very rare. The frequency of such transitions can be assessed using numerical simulations as described in \cite{Stolz2017}. In fact, in a preliminary numerical experiment where non-coherent band surgery is modeled on $9.6\times10^4$ trefoils represented as polygonal chains in the simple cubic lattice, the probability of the transition from the trefoil to the unknot was $0.975$, from the trefoil to itself was $0.013$, and the transition from the trefoil to $T(2,7)$ was not observed. In a separate experiment where $3.3\times10^5$ polygons of type $T(2,7)$ with two sites in inverted repeats were used as substrates, the transition to the unknot occurred with probability $0.94$ and to the trefoil with probability $0.008$. In this experiment one single transition was observed from $T(2,7)$ to itself.

\subsection*{Acknowledgements.}  We would like to thank Ken Baker, Yi Ni, Peter Ozsv\'ath, Koya Shimokawa, Laura Starkston, and Faramarz Vafaee for helpful conversations. We thank Michelle Flanner for assistance obtaining preliminary numerical simulations. TL was partially supported by DMS-1709702.  AM and MV were partially supported by DMS-1716987.  MV was also partially supported by CAREER Grant DMS-1519375.

\pagebreak
\begin{footnotesize}
	\bibliographystyle{plain}
	\bibliography{biblio}
\end{footnotesize}

\end{document}